\newtheorem{thm}{Theorem}[section]
\newtheorem{cor}[thm]{Corollary}
\newtheorem{lemma}[thm]{Lemma}
\newtheorem{prop}[thm]{Proposition}
\newtheorem{remark}[thm]{Remark}
\newtheorem{defn}[thm]{Definition}
\newtheorem{example}[thm]{Example}
\newenvironment{proof}{{\bf Proof:\,}}{\hspace*{\fill}\rule{1.2ex}{1.2ex}\\ }
\newcommand{\N}{\mathds{N}}
\DeclareMathOperator*{\essi}{ess\,inf}
\newcommand{\R}{\mathds{R}}
\newcommand{\Nabla}{\bm{\nabla}}
\begin{document}

\title{Heat Flow on Finsler Manifolds}
\author{Shin-ichi Ohta\thanks{
Partly supported by the JSPS fellowship for research abroad and
the Grant-in-Aid for Young Scientists (B) 20740036.}, Karl-Theodor Sturm}

\maketitle

\section*{}

This paper studies the heat flow on Finsler manifolds.
A \emph{Finsler manifold} is a smooth manifold $M$ equipped with a \emph{Minkowski norm}
$F(x,\cdot ):T_xM \rightarrow \R_+$ on each tangent space.
Mostly, we will require that this norm is strongly convex and smooth and that it depends smoothly on the base point $x$.
The particular case of a \emph{Hilbert norm} on each tangent space leads to the important subclasses of Riemannian manifolds
where the heat flow is widely studied and well understood.
We present two approaches to the heat flow on a Finsler manifold:
\begin{itemize}\item
either as gradient flow on $L^2(M,m)$ for the \emph{energy}
\[ \mathcal{E}(u) =\frac{1}{2} \int_M F^2(\Nabla u) \,dm; \]
\item
or as gradient flow on the {\it reverse} $L^2$-Wasserstein space $\mathcal{P}_2(M)$
of probability measures on $M$ for the \emph{relative entropy}
\[ \mathrm{Ent}(u) =\int_M u \log u \,dm. \]
\end{itemize}
Both approaches depend on the choice of a measure $m$ on $M$ and then lead to the same \emph{nonlinear} evolution semigroup.
We prove $\mathcal{C}^{1,\alpha}$-regularity for solutions to the (nonlinear) heat equation on the Finsler space $(M,F,m)$.
Typically, solutions to the heat equation will not be $\mathcal{C}^2$.
Moreover, we derive pointwise comparison results \'a la Cheeger-Yau and
integrated upper Gaussian estimates \'a la Davies.

\section{Finsler Manifolds}\label{sc:pre}

\subsection{Finsler Structures}

Throughout this paper,  a \emph{Finsler manifold} will be a pair  $(M,F)$ where $M$ is a smooth, connected $n$-dimensional manifold
and $F:TM \rightarrow \R_+$ is a measurable function (called \emph{Finsler structure}) with the following properties:
\begin{itemize}
\item[(i)] $F(x,c\xi) =c F(x,\xi)$ for all $(x,\xi) \in TM$ and all $c>0$.
\item[(ii)] For each point $\overline{x} \in M$,
there are a local coordinate system $(x^i)_{i=1}^n$ on a neighborhood $U$ of $\overline{x}$
and positive numbers $\lambda$ and $\lambda^*$ such that, for almost every $x \in U$, the function
$F^2(x,\cdot)$ on $T_xM \setminus \{0\}$ is twice differentiable and the $(n \times n)$-matrix
\begin{equation}\label{eq:F-str1}
g_{ij}(x,\xi):= \frac{\partial^2}{\partial \xi^i \partial \xi^j} \bigg( \frac{1}{2} F^2(x,\xi) \bigg)
\end{equation}
is uniformly elliptic on $U$ in the sense that
\begin{equation}\label{eq:F-str2}
\lambda^* \sum_{i=1}^n (\eta^i)^2 \le \sum_{i,j=1}^n g_{ij}(x,\xi) \eta^i \eta^j
 \le \frac1{\lambda} \sum_{i=1}^n (\eta^i)^2
\end{equation}
holds for all $\xi \in T_xM \setminus \{0\}$ and all $\eta \in T_xM$.
Here $(x^i,\xi^i)_{i=1}^n$ denotes the local coordinate system on $\pi^{-1} (U) \subset TM$ given by
$\xi=\sum_{i=1}^n \xi^i (\partial/\partial x^i)$.
We will say that such a point $x$ is {\it regular}.
\end{itemize}

The uniform ellipticity $(\ref{eq:F-str2})$ in particular implies
\begin{equation}\label{eq:F-str3}
\lambda^* \sum_{i=1}^n (\eta^i)^2 \le F^2(x,\eta) \le \frac1{\lambda} \sum_{i=1}^n (\eta^i)^2
\end{equation}
and thus the existence of positive constants $\kappa$ and $\kappa^*$ with
\begin{equation}\label{eq:F-str4}
{\kappa^*} F^2(x,\eta) \le \sum_{i,j=1}^n g_{ij}(x,\xi) \eta^i \eta^j \le \frac1\kappa F^2(x,\eta)
\end{equation}
for almost all $x \in U$ and all $\xi,\eta \in T_xM \setminus \{0\}$.
This (coordinate-free) inequality in turn implies
\begin{align}
F^2\bigg( x,\frac{\xi+\eta}{2} \bigg)
&\ge \frac{1}{2}F^2(x,\xi) +\frac{1}{2}F^2(x,\eta) -\frac1{4\kappa}F^2(x,\eta-\xi), \nonumber \\
F^2\bigg( x,\frac{\xi+\eta}{2} \bigg)
&\le \frac{1}{2}F^2(x,\xi) +\frac{1}{2}F^2(x,\eta) -\frac{\kappa^*}4 F^2(x,\eta-\xi) \label{eq:2uni}
\end{align}
for all $x\in U$ and $\xi,\eta \in T_xM$ (see \cite{BCL}, \cite{Ouni}).
For any subset $\Omega\subset M$, the largest constants $\kappa,\kappa^* \in (0,1]$ such that (\ref{eq:F-str4}) holds
for all $x\in\Omega$ will be denoted by $\kappa_\Omega$ and $\kappa^*_\Omega$.
The constants $1/\sqrt{\kappa^*_\Omega}$ and $1/\sqrt{\kappa_\Omega}$ are also known as {\it $2$-uniform convexity}
and {\it smoothness constants}.
Let us remark that $\kappa_\Omega=1$ (or $\kappa^*_\Omega=1$) if and only if $F(x,\cdot)$ is a Hilbert norm for each $x\in \Omega$.

\medskip

A nonnegative function $\| \cdot \|$ on $\R^n$ is called  {\it Minkowski norm} ---
and the pair $(\R^n, \| \cdot \|)$ is then called {\it Minkowski space} --- if $\|x\| >0$, $\|cx\| =c\|x\|$
and $\|x+y\| \le \|x\| +\|y\|$ hold for all $x,y \in \R^n \setminus \{0\}$ and $c>0$.
Thus a Finsler structure $F$ on $M$ induces for a.e. $x\in M$ a Minkowski norm $F(x,\cdot)$ on the tangent space $T_xM$.

Observe that there is a one-to-one correspondence between Minkowski norms
$\| \cdot \|: \R^n \rightarrow \R_+$ and convex, bounded open sets $B \subset \R^n$ containing the origin:
given $\| \cdot \|$, $B$ will be the open unit ball $\{ x \in \R^n \,:\, \|x\| <1 \}$;
given $B$, the associated Minkowski norm is defined by $\|x\| =\inf\{ c>0 \,:\, c^{-1}x \in B \}$.
Obviously, $\| \cdot \|$ will even be a norm if and only if $B$ is symmetric (i.e., $x \in B$ if and only if $-x \in B$).

The {\it reverse} Finsler structure $\overleftarrow{F}$ of $F$ is defined by $\overleftarrow{F}(x,\xi):=F(x,-\xi)$.
We say that $F$ is  {\it reversible} (or {\it absolutely homogeneous}) if $\overleftarrow{F}=F$.

\medskip

Usually in differential geometry, Finsler manifolds are assumed to be {\it smooth} in the sense that
$F$ is smooth on $TM\setminus\{0\}$. Note that we never require smoothness at the zero section.
Requiring that $F^2(x,\cdot)$ is $\mathcal{C}^2$ on all of $T_xM$ implies that $F(x,\cdot)$ is a Hilbert norm
(see \cite[Proposition 2.2]{Shvol}).

\subsection{The Legendre Transform}

For a Finsler structure $F$ on $M$, we define the dual structure $F^*: T^*M \rightarrow \R_+$ by
\[ F^*(x,\alpha) =\sup\{ \alpha \xi \,:\, \xi \in T_xM,\, F(x,\xi) \leq 1 \} \]
for $(x,\alpha) \in T^*M$ with regular $x$.
For each regular $x \in M$, we remark that $F^*(x,\cdot)$ is a Minkowski norm on $T_x^*M$ and set
\begin{equation}\label{eq:g*ij}
g^*_{ij}(x,\alpha) :=\frac{\partial^2}{\partial \alpha^i \partial \alpha^j} \bigg( \frac{1}{2}F^{*2}(x,\alpha) \bigg)
\end{equation}
for $\alpha=\sum_{i=1}^n \alpha^i dx^i \in T^*_xM \setminus \{0\}$ in a local coordinate system $(x^i)_{i=1}^n$.
(Here $F^{*2}(x,\cdot)$ is indeed twice differentiable on $T_x^*M \setminus \{0\}$,
see Lemma \ref{lm:g}(iii) below.)

The \emph{Legendre transform} or \emph{transfer map}  $J^*:T^*M \rightarrow TM$ assigns to each $\alpha \in T_x^*M$ with regular $x$
the unique maximizer of the function
\begin{equation}\label{eq:Leg}
\xi \ \mapsto \ \alpha \xi -\frac{1}{2}F^2(x,\xi) -\frac{1}{2}F^{*2}(x,\alpha)
\end{equation}
on $T_xM$.
(The last term is unnecessary but inserted for the sake of symmetry.)
The uniqueness is guaranteed by the strict convexity of $F(x,\cdot)$.
The vector $J^*(x,\alpha)$ can be characterized as the unique vector $\xi \in T_xM$
with $F(x,\xi)=F^*(x,\alpha)$ and $\alpha \xi =F^*(x,\alpha) F(x,\xi)$.
We can define $J:TM \rightarrow T^*M$ in an analogous way and then $J(x,\xi)$
is the unique maximizer of $(\ref{eq:Leg})$ as a function of $\alpha$.

We recall several standard properties of the Legendre transform which can be found in \cite[\S 14.8]{BCS} for instance.

\begin{lemma}\label{lm:g}
Fix regular $x \in M$.
\begin{itemize}
\item[{\rm (i)}] It holds that $J^*=J^{-1}$ on $T_x^*M$.

\item[{\rm (ii)}] For any $\alpha=\sum_{i=1}^n \alpha^i dx^i \in T_x^*M$
and $\xi=\sum_{i=1}^n \xi^i (\partial/\partial x^i) \in T_xM$, we have
\[ J^*(x,\alpha)=\sum_{i=1}^n \frac{\partial}{\partial \alpha^i} \bigg( \frac{1}{2}F^{*2}(x,\alpha) \bigg)
 \frac{\partial}{\partial x^i}, \quad
 J(x,\xi)=\sum_{i=1}^n \frac{\partial}{\partial \xi^i} \bigg( \frac{1}{2}F^2(x,\xi) \bigg) dx^i. \]

\item[{\rm (iii)}] $g^*_{ij}(x,\alpha)$ in $(\ref{eq:g*ij})$ is well-defined
for all $\alpha \in T^*_xM \setminus \{0\}$ and we have,
for all $\xi \in T_xM \setminus \{0\}$ and $\alpha \in T_x^*M \setminus \{0\}$,
\[ J(x,\xi) =g(x,\xi) \xi=\sum_{i,j=1}^n g_{ij}(x,\xi)\xi^i dx^j \in T_x^*M,\]
\[ J^*(x,\alpha) =g^*(x,\alpha)\alpha=\sum_{i,j=1}^n g^*_{ij}(x,\alpha) \alpha^i \frac{\partial}{\partial x^j} \in T_xM. \]
In particular, $g^*_{ij}(x,\alpha)$ is the inverse matrix of $g_{ij}(x,J^*(x,\alpha))$.

\item[{\rm (iv)}] For all $\xi,\eta \in T_xM$, we have
\begin{equation}
\label{eq:2uni-mod}
 \big( J(x,\eta)-J(x,\xi) \big)(\eta-\xi)\ge \kappa^* F^{2}(x,\eta-\xi).
\end{equation}

\item[{\rm (v)}] The dual structure $F^*$ satisfies estimates analogous to $(\ref{eq:F-str2})$, $(\ref{eq:F-str3})$,
$(\ref{eq:F-str4})$, $(\ref{eq:2uni})$ and $(\ref{eq:2uni-mod})$
with $\lambda^*$ and $\kappa^*$ in the place of $\lambda$ and $\kappa$, respectively, and vice versa.
\end{itemize}
\end{lemma}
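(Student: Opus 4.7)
The plan is to treat the five parts in order, leaning on the variational definition of $J, J^*$ and on the strict convexity/uniform ellipticity of $\tfrac{1}{2}F^2$ on each $T_xM$.

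For (i), I would use the characterization given right before the lemma: $\xi=J^*(x,\alpha)$ iff $F(x,\xi)=F^*(x,\alpha)$ and $\alpha\xi=F^*(x,\alpha)F(x,\xi)$. Exactly the same symmetric condition characterizes $J(x,\xi)=\alpha$, so substituting $\xi=J^*(x,\alpha)$ shows $J(J^*(x,\alpha))=\alpha$; uniqueness (from strict convexity) gives $J^*=J^{-1}$. For (ii), the formula for $J$ is the first-order condition for $\xi$ to maximize $\alpha\mapsto \alpha\xi-\tfrac{1}{2}F^2(x,\xi)-\tfrac{1}{2}F^{*2}(x,\alpha)$ as a function of $\alpha$: differentiating gives $J(x,\xi)_i=\partial_{\xi^i}(\tfrac{1}{2}F^2)$. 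The analogous formula for $J^*$ follows identically, once (iii) supplies $\mathcal{C}^2$-regularity of $F^{*2}$.

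For (iii), the key point is $\mathcal{C}^2$-regularity of $F^{*2}$ off the zero section. The map $\xi\mapsto J(x,\xi)$ is $\mathcal{C}^1$ from $T_xM\setminus\{0\}$ to $T_x^*M\setminus\{0\}$; its Jacobian is precisely the matrix $(g_{ij}(x,\xi))$, which is invertible by the uniform ellipticity (\ref{eq:F-str2}). The inverse function theorem then gives $J^*=J^{-1}\in\mathcal{C}^1$, hence $F^{*2}\in\mathcal{C}^2$, and the Jacobian of $J^*$ at $\alpha$ is $(g_{ij}(x,J^*(x,\alpha)))^{-1}$, which by (ii) equals $(g^*_{ij}(x,\alpha))$. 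The formulas $J(x,\xi)=g(x,\xi)\xi$ and $J^*(x,\alpha)=g^*(x,\alpha)\alpha$ follow from Euler's identity: since $\partial_{\xi^i}(\tfrac{1}{2}F^2)$ is positively $1$-homogeneous in $\xi$, differentiating $\tfrac{1}{2}F^2(x,c\xi)=c^2\tfrac{1}{2}F^2(x,\xi)$ and applying Euler yields $\sum_j g_{ij}(x,\xi)\xi^j=\partial_{\xi^i}(\tfrac{1}{2}F^2)=J(x,\xi)_i$.

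For (iv), with $\zeta(t):=\xi+t(\eta-\xi)$ and using (ii),
\[
\big(J(x,\eta)-J(x,\xi)\big)(\eta-\xi)=\int_0^1\frac{d}{dt}\Big[J(x,\zeta(t))(\eta-\xi)\Big]dt=\int_0^1\sum_{i,j}g_{ij}\bigl(x,\zeta(t)\bigr)(\eta^i-\xi^i)(\eta^j-\xi^j)\,dt,
\]
and the integrand is $\ge \kappa^* F^2(x,\eta-\xi)$ almost everywhere by (\ref{eq:F-str4}); integrating gives the claim. (The at most one value of $t$ with $\zeta(t)=0$ is a null set, and $g_{ij}$ remains $L^1$-bounded by (\ref{eq:F-str2}), so the passage to the limit is harmless.) Finally, (v) is a bookkeeping statement: by (i) and (iii), $J^*$ is a diffeomorphism swapping $F$ and $F^*$, and $(g^*_{ij})$ is the matrix inverse of $(g_{ij})$; inverting a symmetric matrix inverts its eigenvalue bounds, so the roles of $\lambda$ and $\lambda^*$ (respectively $\kappa$ and $\kappa^*$) get interchanged. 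The dual versions of (\ref{eq:F-str2})--(\ref{eq:2uni-mod}) then follow by applying (i)--(iv) to the Finsler structure $F^*$ in place of $F$.

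The real work is concentrated in (iii): everything else is algebraic manipulation or a line integral once the $\mathcal{C}^2$-regularity and duality of the Hessians are in hand. The main obstacle is therefore carrying out the inverse function theorem argument cleanly off the zero section while tracking the ellipticity constants, so that (iv) and (v) can simply invoke it.
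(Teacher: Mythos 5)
Your proposal is correct and follows essentially the same route as the paper: the inverse function theorem applied to $J$ (whose Jacobian is $g$, invertible by (\ref{eq:F-str2})) for part (iii), and the identity $g=\partial_\xi J$ combined with (\ref{eq:F-str4}) for part (iv), with (i), (ii), (v) being standard Legendre-duality facts. The only difference is cosmetic: in (iv) you integrate $t\mapsto J(x,\zeta(t))(\eta-\xi)$ along the segment rather than invoking the mean value theorem as the paper does, which is if anything slightly more careful about the single parameter value where the segment may cross the origin.
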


\begin{proof}
The existence of $g^*_{ij}(x,\alpha)$ in (iii) is merely a consequence of the inverse function theorem (for $J$ and $J^*$).
As for (iv), since $g=\partial_\xi J$ by (ii), the mean value theorem implies that
$(J(x,\eta)-J(x,\xi))(\eta-\xi)=(\eta-\xi)g(x,\gamma)(\eta-\xi)$ for some $\gamma$ on the segment between $\xi$ and $\eta$.
Using $(\ref{eq:F-str4})$ the RHS can thus be estimated from below by $\kappa^* F^{2}(x,\eta-\xi)$.
\end{proof}

Note that at the origin $J^*(x,\cdot)$ is continuous but not differentiable (even if $F$ is smooth on $TM\setminus\{0\}$).

\begin{remark} \rm
Fixing a coordinate system, we may identify both $T_xM$ and $T_x^*M$ with the Euclidean space $\R^n$.
Given a vector $\xi\in T_xM$ of length $F(x,\xi)=1$, the vector $J^*(x,\xi)$ corresponds to the unit normal vector
at the point $\xi$ at the unit sphere in $T_xM$ (Figure 1).




For each regular $x \in M$ and $\xi \in T_xM \setminus \{0\}$, the map
\begin{equation}\label{eq:gxi}
F^\xi(x,\cdot): \ \eta \ \mapsto \ \left(\sum_{i,j=1}^n g_{ij}(x,\xi) \eta^i \eta^j\right)^{1/2}
\end{equation}
defines a Hilbert norm on $T_xM$.
It can be regarded as the best Hilbert norm approximation of the norm $F(x,\cdot)$ in directions close to $\xi$.
More precisely, if $\xi \in \partial B$ is a unit tangent vector, then the unit sphere $\partial B^{\xi}$
associated with the norm $F^{\xi}(x,\cdot)$ is the centered ellipse in $\R^n$ approximating
$\partial B$ up to second order at the point $\xi$ (Figure 2).




\end{remark}

\begin{example}\label{ex:Fstr} \rm
(i) Riemannian spaces:
Let $M=\R^n$ and $F$ be given by
\[ F^2(x,\xi)=\sum_{i=1}^n \xi^i a_{ij}(x)\xi^j \]
with a symmetric, positive-definite matrix $a(x)=(a_{ij}(x))$ on $\R^n$.
Then $g(x,\xi)=a(x)$ independently of $\xi$ and $J(x,\xi)=a(x)\xi$.
Moreover, $g^*(x,\alpha)=a(x)^{-1}$ and $J^*(x,\alpha)=a(x)^{-1} \alpha$.

(ii) $l^p$-spaces:
Let $M=\R^n$ and $F(x,\xi)=\| \xi \|_p=(\sum_{i=1}^n |\xi^i|^p)^{1/p}$ for some $1<p<\infty$.
Then
\[ J(x,\xi) =\| \xi \|_p^{2-p} \cdot (|\xi^i|^{p-2} \xi^i) \]
and $F^*(x,\alpha) =\| \alpha \|_{p^*}$ for the dual exponent $p^*$ satisfying $1/p +1/p^* =1$.
However, $\| \cdot \|_p$ is only $2$-uniformly convex if $1<p<2$ ($\kappa^*=p-1$, $\kappa=0$ in $(\ref{eq:2uni})$)
and only $2$-uniformly smooth if $2<p<\infty$ ($\kappa^*=0$, $\kappa=1/(p-1)$ in $(\ref{eq:2uni})$).
Therefore $\| \cdot \|_p$ is uniformly elliptic only when $p=2$.
Nevertheless, we can still consider the Laplacian (see the next chapter).

(iii) Deformation of Minkowski spaces:
Let $M=\R^n$ and $F(x,\xi) =\| \sigma(x)\xi \|$ for some invertible matrix $\sigma(x)$
and some Minkowski norm $\| \cdot \|$ on $\R^n$ which is strictly convex and twice differentiable on $\R^n \setminus \{0\}$.
(Case (i) is the particular case with Euclidean norm and $a(x)= \sigma(x)^T \sigma(x)$.)
Then we have $J(x,\xi)=\sigma(x)^T J_0(\sigma(x)\xi)$, $g(x,\xi)=\sigma(x)^T g_0(\sigma(x)\xi)\sigma(x)$
and $F^*(x,\alpha)=F^*_0(\alpha \sigma^{(-1)}(x))$,
where $J_0$, $g_0$ and $F^*_0$ are taken with respect to the original norm $\| \cdot \|$.

(iv) Hilbert geometry:
Let $D \subset \R^n$ be a bounded open convex domain with smooth boundary $\partial D$
such that $D \cup \partial D$ is strictly convex.
Given distinct $x,y \in D$, let $x' \in \partial D$ be the intersection of the half line $x+\R_+(x-y)$
with $\partial D$.
Similarly, let us denote by $y' \in \partial D$ the intersection of $y+\R_+(y-x)$ with $\partial D$.
Then the {\it Hilbert metric} $d_H$ is defined by
\[ d_H(x,y):=\log \bigg( \frac{|x'-y| \cdot |x-y'|}{|x'-x| \cdot |y-y'|} \bigg), \]
where $|\cdot|$ is the standard Euclidean norm.
If $D$ is the unit ball, then $(D,d_H)$ coincides with the Klein model of the hyperbolic space.
In general, $d_H$ arises from a Finsler metric of constant negative flag curvature (see \cite{Eg}).

(v) Teichm\"uller metric:
The Teichm\"uller metric on Teichm\"uller space is arguably the most famous Finsler metric in differential geometry.
It is known to be complete, while the Weil-Petersson metric is Riemannian and incomplete (see \cite{EE}, \cite{Wo}).
\end{example}

\subsection{Regularization}

Various of the results presented in this paper also will be true for more general Finsler structures,
not satisfying our basic regularity assumption (\ref{eq:F-str2}) with positive constants
$\lambda$ and $\lambda^*$ but just with nonnegative constants.
However, each Finsler structure $F$ of this type can easily be approximated by Finsler structures $F_{[\epsilon]}$
satisfying our assumptions.
We will illustrate this in the particular case of Minkowski norms on $\mathds{R}^n$.

For the sequel, we fix a Minkowski norm $\| \cdot \|$ and we denote by
$g(\xi)$ the Hessian of $\| \cdot \|^2/2$ at the point $\xi$.
We say that the Minkowski norm $\| \cdot \|$ is \emph{regular} if it satisfies $(\ref{eq:F-str2})$
with positive constants $\lambda$ and $\lambda^*$.

Moreover, we denote the Euclidean norm on $\mathds{R}^n$ by $| \cdot |$.
Note that the Hessian of $| \cdot |^2/2$ at each point is the identity matrix $\mathds1$.
We define the $\epsilon$-lower regularization of the Minkowski norm
$\| \cdot \|$ by $\|\xi\|_{\lfloor \epsilon\rfloor}=\sqrt{\|\xi\|^2+\epsilon|\xi|^2}$
and the $\epsilon$-upper regularization by
$\| \alpha \|_{\lceil \epsilon\rceil}=(\sqrt{ \| \alpha \|^{*2}+\epsilon| \alpha |^2})^*$.
Here $\| \cdot \|^*$ denotes the dual norm. Obviously, on the level of the
Hessians this means $g_{\lfloor \epsilon\rfloor}(\xi)=g(\xi)+\epsilon\mathds1$ and
$g^*_{\lceil \epsilon\rceil}(\alpha)=g^*(\alpha)+\epsilon\mathds1$
where of course $g^*(\alpha)$ is the Hessian of $\| \cdot \|^{*2}/2$
at the point $\alpha$. Recall also that $g^*(\alpha)$ is the inverse
of $g(J^*(\alpha))$. Moreover, we define the
$\epsilon$-regularization $g_{[\epsilon]}(\xi)$ of the matrix $g(\xi)$ by
\[ g_{[\epsilon]}(\xi)=\big( g(\xi)+\epsilon\mathds1 \big) \circ \big( \mathds1+\epsilon g(\xi) \big)^{-1}. \]
If we define in a similar way $g^*_{[\epsilon]}(\alpha)$ then it will be inverse to
$g_{[\epsilon]}(J^*(\alpha))$.
Obviously, for each $\epsilon>0$
\[ g_{[\epsilon]}(\xi)\ge\epsilon\mathds1, \qquad \qquad
 g^*_{[\epsilon]}(\alpha)\ge\epsilon\mathds1 \]
(in the sense of quadratic forms) for all $\xi$ and $\alpha$.
Finally, let us put
\[ \|\xi\|_{[\epsilon]}=\sqrt{\xi \cdot
g_{[\epsilon]}(\xi)\cdot\xi},\qquad
\|\alpha\|^*_{[\epsilon]}=\sqrt{\alpha \cdot
g_{[\epsilon]}(\alpha)\cdot\alpha}. \]
Then $\| \cdot \|_{[\epsilon]}$ and $\| \cdot \|^*_{[\epsilon]}$ are regular
Minkowski norms, dual to each other.
As $\epsilon$ goes to zero, they approximate the original norm $\| \cdot \|$ and its dual, respectively.

\subsection{Gradient Vectors and Distance}

For a weakly differentiable function $u:M \rightarrow \R$, define its \emph{gradient vector} by
\begin{equation}\label{eq:grad}
\Nabla u(x):=J^*\big( x,Du(x) \big)
\end{equation}
for every regular $x \in M$, where the \emph{derivative} $Du(x) \in T_x^*M$ is well-defined.
In a local coordinate system, we have $Du(x)=\sum_{i=1}^n (\partial u/\partial x^i)(x) dx^i$ and
\begin{equation*}
\Nabla u(x)=\sum_{i,j=1}^n g^*_{ij}\big( x,Du(x) \big) \frac{\partial u}{\partial x^i}(x) \frac{\partial}{\partial x^j}.
\end{equation*}
We remark that the nonlinearity descends from the Legendre transform to the gradient vector,
namely $\Nabla(u+v) \neq \Nabla u +\Nabla v$ in general.
For the same reason,  at points $x$ with $\Nabla u(x)=0$ the gradient vector field $\Nabla u$ is in general not differentiable
-- even if $(M,F)$ and $u$ are smooth -- but only continuous.

We define the \emph{distance function} $d:M \times M \rightarrow \R_+$ by
\begin{equation*}\label{dist-sup}
d(x,y) :=\sup \big\{ u(y)-u(x) \,:\, u \in \mathcal{C}^1(M), \,
 F\big( z,\Nabla u(z) \big) \le 1 \mbox{ for all } z \in M \big\}.
\end{equation*}
If $F$ is $\mathcal{C}^2$ on $TM \setminus \{0\}$, then this is equivalent to
\begin{equation*}\label{dist-inf}
d(x,y) =\inf_\gamma \int_0^1 F\big( \gamma,\dot{\gamma}(t) \big) \,dt
 =\inf_\gamma \bigg( \int_0^1 F^2\big( \gamma,\dot{\gamma}(t) \big) \,dt \bigg)^{1/2},
\end{equation*}
where the infimum is taken over all differentiable curves $\gamma:[0,1] \rightarrow M$ with $\gamma(0)=x$ as well as $\gamma(1)=y$.
For fixed $y \in M$, the distance function $x \mapsto d(y,x)$ satisfies $F(x,\Nabla d(y,x)) = 1$ for almost every $x \in M$
(more precisely, for all $x\in M\setminus(\{y\}\cup\mathrm{Cut}(y))$
where $\mathrm{Cut}(y)$ being the cut locus of $y$, see Chapter \ref{sc:Ricci}).
Moreover, the distance function $d$ has the following properties of a metric:
\begin{itemize}
\item $d(x,y) \ge 0$ for all $x,y \in M$ and $d(x,y)=0$ if and only if $x=y$;
\item $d(x,z) \le d(x,y)+d(y,z)$ for all $x,y,z \in M$.
\end{itemize}
Note that in general $d$ will not be symmetric.
The function $\overleftarrow{d}(x,y):=d(y,x)$ will be the distance function for the reverse Finsler structure $\overleftarrow{F}$ of $F$.
Locally $d$ and $\overleftarrow d$ are comparable thanks to the uniform ellipticity $(\ref{eq:F-str2})$.
We define the {\it forward} and {\it backward open balls} as
\[ B^+(x,r):=\{ y \in M \,:\, d(x,y)<r \}, \quad B^-(x,r):=\{ y \in M \,:\, d(y,x)<r \} \]
for $x \in M$ and $r>0$.
Closed balls are defined similarly.

\begin{example} \rm
For  each Minkowski space $(M,F)=(\R^n,\| \cdot \|)$ we have $d(x,y)=\| y-x \|$.
\end{example}

We say that a Finsler manifold $(M,F)$ is {\it forward complete}
if every {\it forward Cauchy sequence} is convergent.
That is to say, if a sequence $\{ x_n \}_{n \in \N} \subset M$ satisfies
$\lim_{N \to \infty} \sup_{N \le n<m} d(x_n,x_m) =0$, then there exists a point $x \in M$ such that
$\lim_{n \to \infty}d(x_n,x) =0$.
By the Hopf-Rinow theorem (cf.\ \cite[Theorem 6.6.1]{BCS}), the forward completeness is equivalent to
that every bounded forward closed ball is compact.
We can similarly define the {\it backward completeness} which is nothing but the forward completeness of $\overleftarrow{F}$.
They are not equivalent because a forward Cauchy sequence may not be a backward Cauchy sequence.
Nonetheless, the convergence $\lim_{n \to \infty} d(x_n,x)=0$ is equivalent to $\lim_{n \to \infty}d(x,x_n)=0$.

Observe from the definition of the Legendre transform that $\Nabla u(x)$ points into the direction
in which  $u$ increases the most.
That is to say,
\[ F\big( x,\Nabla u(x) \big) =\limsup_{y \to x} \frac{u(y)-u(x)}{d(x,y)}. \]
If $u$ is $\mathcal{C}^1$ and if $F$ is $\mathcal{C}^2$ on $TM \setminus \{0\}$, then we have
\[ -\int_0^l F\big( \gamma,\Nabla (-u)(\gamma) \big) F(\gamma,\dot{\gamma}) \,dt
 \le u\big( \gamma(l) \big) -u\big( \gamma(0) \big)
 \le \int_0^l F\big( \gamma,\Nabla u(\gamma) \big) F(\gamma,\dot{\gamma}) \,dt \]
for any $\mathcal{C}^1$-curve $\gamma:[0,l] \rightarrow M$.
Note the difference between $\Nabla(-u)$ and $-\Nabla u$.

\section{Finsler Laplacian}\label{sc:Lap}

Besides the Finsler structure $F$ on $M$, throughout the paper, we fix a \emph{measure} $m$ on $M$.
We always assume that this is locally bounded from above and below in terms of the volume form,
i.e., each point $\overline{x}\in M$ has a neighborhood $U$ with a local coordinate system $(x^i)_{i=1}^n$ such that
\begin{equation}\label{measure}
m(dx) =e^{-V(x)} dx^1 \cdots dx^n
\end{equation}
for some bounded measurable function $V:U \rightarrow \R$.

Given a smooth vector field $\Psi:M \rightarrow TM$, we define its \emph{divergence}
$\mathrm{div}\, \Psi:M \rightarrow \R$ through the identity
\begin{equation}\label{eq:div}
\int_M u\, \mathrm{div}\, \Psi \,dm =-\int_M \Psi u \,dm =-\int_M Du \cdot \Psi \,dm
\end{equation}
for all $u \in \mathcal{C}_c^{\infty}(M)$, where $Du \cdot \Psi=Du(\Psi)$ at $x$
denotes the canonical pairing between $T^*_xM$ and $T_xM$.
If in local coordinates $(x^i)_{i=1}^n$ the measure $m$ and the vector field $\Psi$ are given as
$m(dx)=e^{-V(x)} dx^1\cdots dx^n$ and $\Psi(x)=\sum_{i=1}^n \Psi^i (x) (\partial/\partial x^i)$
with differentiable functions $V$ and $\Psi^i$, then we have
\[ \mathrm{div}\, \Psi(x) =\sum_{i=1}^n \bigg\{
 \frac{\partial \Psi^i}{\partial x^i}(x) -\Psi^i(x) \frac{\partial V}{\partial x^i}(x) \bigg\}. \]
The concept of $\text{div}\, \Psi$ extends in an obvious way to smooth vector fields defined on open subsets
as well as to vector fields which are only weakly differentiable.

\begin{defn} \rm
A {\it Finsler space} is a triple $(M,F,m)$ consisting of a smooth, finite dimensional manifold $M$,
a Finsler structure $F$ on $M$ and a measure $m$ on $M$ as above.
\end{defn}

Note that, in this setting, the gradient depends on $F$ and the divergence on $m$.
Both $F$ and $m$ can be chosen independently.
The reason why we consider an arbitrary measure rather than constructive ones
(such as the Busemann-Hausdorff and the Holmes-Thompson measures) will be explained in Chapter \ref{sc:Ricci}.

Given an open set $\Omega \subset M$, the \emph{energy functional}
$\mathcal{E}_{\Omega}:H_{\mathrm{loc}}^1(\Omega) \rightarrow [0,\infty]$ on $\Omega$
is defined by
\[ \mathcal{E}_{\Omega}(u):=\frac{1}{2} \int_{\Omega} F^{*2}\big( x,Du(x) \big) \,m(dx)
 =\frac{1}{2} \int_{\Omega} F^2\big( x,\Nabla u(x) \big) \,m(dx). \]
 We will suppress $\Omega$ if $\Omega =M$, i.e., $\mathcal{E}:=\mathcal{E}_M$.
Clearly $\mathcal{E}_{\Omega}^{1/2}$ is convex and positively homogeneous.
Note that this energy functional coincides with Cheeger's one \cite{Ch} in terms of upper gradients (see also \cite{Sha}).
In order to make full use of Ricci curvature assumptions, this seems more suitable than the energy functional in terms of averaged difference quotients
as studied for instance in  \cite{Staop} or \cite{KS}.
The averaged energy incorporates a linearization of the operator (or the semigroup).
However,  the `canonical' Laplacians and heat semigroups on Finsler manifolds are always nonlinear
-- except in the Riemannian case.
See also \cite{Orec} for related work.

\medskip

Recall that the classes $L_{\mathrm{loc}}^2(\Omega)$ and $H_{\mathrm{loc}}^1(\Omega)$
are defined solely in terms of the manifold structure of $M$ (i.e., independent of the choices of $F$ and $m$).
Let $H^1(\Omega):=\{ u \in H^1_{\mathrm{loc}}(\Omega) \,:\, \mathcal{E}_{\Omega}(u) <\infty \}$
and $H_0^1(\Omega)$ be the closure of $\mathcal{C}_c^{\infty}(\Omega)$ (or, equivalently, $H_c^1(\Omega)$)
in $H^1(\Omega)$ with respect to the (Minkowski) norm $\|u\|_{H^1} :=\|u\|_{L^2} +\mathcal{E}_{\Omega}(u)^{1/2}$.
The dual space to $H^1_0(\Omega)$ is denoted by $H^{-1}(\Omega)$.

Define the \emph{energy functional with Dirichlet boundary conditions} $\mathcal{E}^0_{\Omega}: L^2(\Omega) \rightarrow [0,\infty]$
by $\mathcal{E}^0_\Omega(u):=\mathcal{E}_\Omega(u)$ for $u\in  H_0^1(\Omega)$ and
$\mathcal{E}^0_\Omega(u):=+\infty$ else.
The \emph{ground state energy} (inverse Poincar\'e constant) is given by
\[ \chi_\Omega :=\inf \left\{ 2\mathcal{E}_{\Omega}(u) \,:\, u\in H^1_0(\Omega),\, \|u\|_{L^2}=1\right\}. \]
If $\chi_\Omega=0$ (e.g., if $\Omega$ is compact) then it is more convenient to consider
\begin{equation}\label{eq:nuM}
\overline\chi_\Omega :=\inf \left\{ 2\mathcal{E}_{\Omega}(u) \,:\,
 u\in H_0^1(\Omega),\, \|u\|_{L^2}=1,\, \int_\Omega u\,dm=0\right\}.
\end{equation}

\begin{lemma}\label{lm:E}
\begin{itemize}
\item[{\rm (i)}]  The energy functional $\mathcal{E}^0_\Omega$ is lower semicontinuous on $L^2(\Omega)$.

\item[{\rm (ii)}] If $\Omega$ is relatively compact, then  $H^1_0(\Omega)$  is proper in the sense that
every bounded sequence in $(H^1_0(\Omega),\| \cdot \|_{H^1})$ contains a convergent subsequence.

\item[{\rm (iii)}]
If $\Omega$ is relatively compact and connected with non-polar boundary, then $\chi_\Omega>0$.
If $M$ is compact, then $\overline\chi_M>0$.

\item[\rm (iv)] The functional $\mathcal{E}^0_\Omega$ is $K$-convex on $L^2(\Omega,m)$ with $K=\chi_\Omega \kappa_\Omega$.
Moreover, for each $C\in\R$ it is $\overline K$-convex on the convex set $\{u\in L^2(\Omega): \ \int_\Omega u\,dm=C\}$ with
$\overline K=\overline\chi_\Omega \kappa_\Omega$.
\end{itemize}
\end{lemma}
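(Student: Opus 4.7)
The strategy is a sequence of standard variational arguments, organized around two inputs: the uniform ellipticity $(\ref{eq:F-str2})$ makes $F^*$ comparable to the Euclidean norm (so Finsler Sobolev spaces coincide, up to equivalent norms, with the classical ones), and the dual 2-uniform convexity asserted in Lemma~\ref{lm:g}(v) supplies a parallelogram-type inequality with constant $\kappa_\Omega$.

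For (i), I would first note that $\mathcal{E}_\Omega$ is convex on $H^1(\Omega)$ because $F^{*2}(x,\cdot)$ is convex on each cotangent fibre and $u\mapsto Du$ is linear; it is also $\|\cdot\|_{H^1}$-continuous by the upper bound in $(\ref{eq:F-str3})$ for $F^*$, hence weakly lower semicontinuous on $H^1_0(\Omega)$. Given $u_n\to u$ in $L^2(\Omega)$ with $\liminf \mathcal{E}^0_\Omega(u_n)<\infty$, the equivalence of the Finsler and classical Sobolev norms (which $(\ref{eq:F-str3})$ together with local boundedness of $V$ in $(\ref{measure})$ furnishes) yields a uniform $H^1$-bound; extracting a weakly $H^1_0$-convergent subsequence and identifying its limit with $u$, weak lower semicontinuity gives $\mathcal{E}^0_\Omega(u)\le \liminf \mathcal{E}^0_\Omega(u_n)$. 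For (ii), the same norm comparison plus the classical Rellich--Kondrachov theorem applied on the finitely many charts covering $\overline\Omega$ produces the compact inclusion $H^1_0(\Omega)\hookrightarrow L^2(\Omega)$.

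For (iii), I would run the usual compactness proof of the Poincaré inequality by contradiction. If $\chi_\Omega=0$, pick $\{u_n\}\subset H^1_0(\Omega)$ with $\|u_n\|_{L^2}=1$ and $\mathcal{E}_\Omega(u_n)\to 0$; the sequence is $H^1$-bounded, so (ii) produces an $L^2$-limit $u$ with $\|u\|_{L^2}=1$, and (i) forces $\mathcal{E}^0_\Omega(u)=0$, hence $Du=0$ a.e. Thus $u$ is locally, and by connectedness globally, constant; since $u\in H^1_0(\Omega)$ and $\partial\Omega$ is non-polar, $u\equiv 0$, contradicting $\|u\|_{L^2}=1$. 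For $\overline\chi_M>0$ on compact $M$ the mean-zero constraint replaces the boundary condition: a constant function of zero mean is zero.

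The substantive step is (iv). Lemma~\ref{lm:g}(v) applied to the second line of $(\ref{eq:2uni})$ gives, for $x\in\Omega$ and $\alpha,\beta\in T^*_xM$,
\[ F^{*2}\!\left(x,\tfrac{\alpha+\beta}{2}\right) \le \tfrac{1}{2}F^{*2}(x,\alpha)+\tfrac{1}{2}F^{*2}(x,\beta) -\tfrac{\kappa_\Omega}{4} F^{*2}(x,\beta-\alpha). \]
A standard dyadic-plus-continuity argument upgrades this midpoint estimate to
\[ F^{*2}\!\big(x,(1-t)\alpha+t\beta\big)\le (1-t)F^{*2}(x,\alpha)+t F^{*2}(x,\beta) -\kappa_\Omega\, t(1-t)\, F^{*2}(x,\beta-\alpha) \]
for every $t\in[0,1]$: the auxiliary function $\tilde\psi(t):=F^{*2}(x,(1-t)\alpha+t\beta)-(1-t)F^{*2}(x,\alpha)-tF^{*2}(x,\beta)+\kappa_\Omega t(1-t)F^{*2}(x,\beta-\alpha)$ vanishes at the endpoints, is midpoint-convex by the previous inequality, and is continuous, hence convex, hence nonpositive on $[0,1]$. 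Specializing to $\alpha=Du$, $\beta=Dv$ (so the left-hand side becomes $F^{*2}(x, D((1-t)u+tv))$) and integrating against $m$,
\[ \mathcal{E}_\Omega\!\big((1-t)u+tv\big) \le (1-t)\mathcal{E}_\Omega(u)+t\mathcal{E}_\Omega(v) -\kappa_\Omega\, t(1-t)\, \mathcal{E}_\Omega(v-u). \]
For $u,v\in H^1_0(\Omega)$ the difference $v-u$ lies in $H^1_0(\Omega)$, and the definition of $\chi_\Omega$ gives $2\mathcal{E}_\Omega(v-u)\ge \chi_\Omega \|v-u\|_{L^2}^2$; substituting converts the energy deficit into the desired $L^2$-deficit and yields the $K$-convexity inequality with $K=\chi_\Omega\kappa_\Omega$. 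The mean-$C$ variant is identical: since $\int_\Omega (v-u)\,dm=0$, the constant $\overline\chi_\Omega$ replaces $\chi_\Omega$, producing $\overline K=\overline\chi_\Omega \kappa_\Omega$.

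The main obstacle I anticipate is the bookkeeping in (iv): identifying the correct uniform-convexity constant on the \emph{dual} side (the ``swap'' in Lemma~\ref{lm:g}(v) is essential, since otherwise one would get $\kappa^*_\Omega$, not $\kappa_\Omega$), promoting the midpoint inequality to all $t\in[0,1]$ without loss of constant, and combining pointwise 2-uniform convexity with the Poincaré inequality so that the deficit is measured in the $L^2$-distance rather than in energy. Parts (i)--(iii) are routine once $(\ref{eq:F-str3})$ is invoked to reduce to the classical Sobolev situation.
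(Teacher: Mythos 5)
Your proposal is correct and follows essentially the same route as the paper: parts (i)--(iii) are handled by the standard reduction to an equivalent Riemannian/classical Sobolev structure via the uniform ellipticity, and part (iv) rests on exactly the dual $2$-uniform convexity inequality from Lemma~\ref{lm:g}(v) combined with the Poincar\'e bound $2\mathcal{E}^0_\Omega(u-v)\ge\chi_\Omega\|u-v\|_{L^2}^2$ (resp.\ $\overline\chi_\Omega$). The only difference is that the paper stops at the midpoint inequality, whereas you carry out the routine dyadic upgrade to all $t\in[0,1]$; both yield $K=\chi_\Omega\kappa_\Omega$.
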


\begin{proof} (i) -- (iii) are standard facts.
In fact, we can reduce (ii) and (iii) to a Riemannian structure (bi-Lipschitz) equivalent to $F$.

(iv) Recall that the dual version of (\ref{eq:2uni}) states
\[ F^{*2}\bigg( x,\frac{\alpha+\beta}{2} \bigg)
 \le \frac{1}{2}F^{*2}(x,\alpha) +\frac{1}{2}F^{*2}(x,\beta) -\frac{\kappa_{\Omega}}4 F^{*2}(x,\beta-\alpha) \]
for all $x \in \Omega$ and $\alpha, \beta \in T^*_xM$.
Hence,
\[ \mathcal{E}^0_\Omega\left(\frac{u+v}2\right)
 \le\frac12\mathcal{E}^0_\Omega\left(u\right)+\frac12\mathcal{E}^0_\Omega\left(v\right)
 -\frac{\kappa_{\Omega}}{4}\mathcal{E}^0_\Omega\left({u-v}\right). \]
The last term can be estimated by
$2\mathcal{E}^0_\Omega({u-v})\ge \chi_\Omega\|u-v\|^2_{L^2}$ for all $u,v\in L^2$ and by
$2\mathcal{E}^0_\Omega({u-v})\ge \overline\chi_\Omega\|u-v\|^2_{L^2}$ if in addition $\int udm=\int vdm$.
This proves the $K$- (and $\overline K$-, respectively) convexity.
\end{proof}

\medskip

We define the \emph{Finsler Laplacian} $\bm{\Delta}$ acting on 
functions $u \in H^1_{\mathrm{loc}}(\Omega)$ formally by 
$\bm{\Delta}u :=\mathrm{div}\, (\Nabla u)$ (cf.\ \cite{Shlap}, 
\cite{BKJ}). To be more precise, $\bm{\Delta}u$ is the 
distributional Laplacian defined through the identity 
\[ \int_{\Omega} v \bm{\Delta}u \,dm =-\int_{\Omega} Dv(\Nabla u) \,dm \]
for all $v \in H^1_0(\Omega)$ (or, equivalently, for all $v \in \mathcal{C}^{\infty}_c(\Omega)$).
Recall that at points $x$ with $\Nabla u(x)=0$ the function $\Nabla u$ in general will be  not differentiable
(even if the function $u$ itself and the norm $F$ will be smooth).
Note the sign convention: our Laplacian is a negative operator, i.e.,
\[ \int_{\Omega} u \bm{\Delta}u \,dm \le 0 \]
for all $u \in H^1_0(\Omega)$ (and equality holds if and only if $u$ is constant a.e.\ on each connected component of $\Omega$).
The Finsler Laplacian is a linear operator if and only if $F$ is a Riemannian structure
(i.e., $F(x,\cdot)$ is a Hilbert norm for a.e.\ $x \in M$).

Given $g \in L_{\mathrm{loc}}^2(\Omega)$, a function $u \in 
H_{\mathrm{loc}}^1(\Omega)$ is called a \emph{weak solution} of 
$\bm{\Delta}u=g$ in $\Omega$ if 
\[ -\int_{\Omega} Dv(\Nabla u) \,dm =\int_{\Omega} vg \,dm \]
for all $v \in H_0^1(\Omega)$. A function $u \in 
H^1_{\mathrm{loc}}(\Omega)$ is said to be \emph{weakly harmonic} on 
$\Omega$ if it is a weak solution of $\bm{\Delta}u=0$ in $\Omega$. 

\begin{lemma}\label{lm:wharm}
A function $u \in H_{\mathrm{loc}}^1(\Omega)$ is weakly harmonic on $\Omega$ if and only if it is a minimizer
of the energy functional $\mathcal{E}_{\Omega'}$ on each open set $\Omega'$ relatively compact in $\Omega$, i.e.,
\[ \mathcal{E}_{\Omega'}(u) =\inf\{ \mathcal{E}_{\Omega'}(u+v) \,:\, v \in H_0^1(\Omega') \}. \]
A function $u \in H^1(\Omega)$ is weakly harmonic on $\Omega$ if and only if it is a minimizer of $\mathcal{E}_{\Omega}$, i.e.,
\[ \mathcal{E}_{\Omega}(u) =\inf\{ \mathcal{E}_{\Omega}(u+v) \,:\, v \in H_0^1(\Omega) \}. \]
\end{lemma}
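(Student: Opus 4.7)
The plan is to derive the Euler-Lagrange condition for the convex functional $\mathcal{E}_{\Omega'}$ and identify it with the weak-harmonicity identity. The key computation is the directional derivative
\[ \frac{d}{dt}\bigg|_{t=0} \mathcal{E}_{\Omega'}(u+tv) \;=\; \int_{\Omega'} Dv(\Nabla u)\, dm \]
for $u \in H^1_{\mathrm{loc}}(\Omega)$ and $v \in H^1_0(\Omega')$. To obtain this, I use Lemma~\ref{lm:g}(ii), which tells us that $\alpha \mapsto \tfrac{1}{2}F^{*2}(x,\alpha)$ is $\mathcal{C}^1$ on $T_x^*M$ with gradient $J^*(x,\alpha)$, so that its pointwise directional derivative in the direction $Dv(x)$ at $\alpha=Du(x)$ equals $Dv(x)(\Nabla u(x))$. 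Passing the derivative under the integral is justified by dominated convergence: the uniform ellipticity of $F^*$ (Lemma~\ref{lm:g}(v)) bounds the difference quotient by a multiple of $\bigl(F^*(x,Du)+F^*(x,Dv)\bigr)\,F^*(x,Dv)$, which is integrable over the relatively compact $\Omega'$ since $u \in H^1_{\mathrm{loc}}(\Omega)$ and $v \in H^1_0(\Omega')$.

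Both implications then follow from convexity of $\mathcal{E}_{\Omega'}$, which is inherited from the convexity of $\alpha \mapsto F^{*2}(x,\alpha)$ (itself a consequence of the positive-definiteness of $g^*$, Lemma~\ref{lm:g}(iii),(v)). If $u$ is weakly harmonic on $\Omega$, then extending $v \in H^1_0(\Omega')$ by zero to an element of $H^1_0(\Omega)$ (standard for relatively compact $\Omega'$) and applying the weak-harmonicity identity shows the displayed derivative vanishes; the first-order optimality criterion for a convex function then yields $\mathcal{E}_{\Omega'}(u) \le \mathcal{E}_{\Omega'}(u+v)$ for all $v \in H^1_0(\Omega')$. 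Conversely, if $u$ minimizes $\mathcal{E}_{\Omega'}$ on every relatively compact $\Omega' \subset \Omega$, then for any $w \in \mathcal{C}^\infty_c(\Omega)$ I choose an $\Omega'$ containing $\supp w$ and obtain $\frac{d}{dt}\big|_{t=0}\mathcal{E}_{\Omega'}(u+tw)=0$, which by the identity above reads $\int_\Omega Dw(\Nabla u)\,dm = 0$. Since $\mathcal{C}^\infty_c(\Omega)$ is dense in $H^1_0(\Omega)$ by definition, $u$ is weakly harmonic.

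The second equivalence, for $u \in H^1(\Omega)$, is proved by the identical argument with $\Omega' = \Omega$ throughout: the assumption $u \in H^1(\Omega)$ ensures $\mathcal{E}_\Omega(u+v)<\infty$ for every $v \in H^1_0(\Omega)$, so the convex comparison makes sense globally and no further approximation is required. The main technical nuisance in the whole argument is the differentiation under the integral on the set $\{Du=0\}$, where $F^{*2}(x,\cdot)$ is only $\mathcal{C}^1$ and not $\mathcal{C}^2$. However, $\mathcal{C}^1$-regularity is exactly what the first-order calculation needs, and the domination bound provided by uniform ellipticity is valid uniformly on all of $T_x^*M$, so this subtlety does not cause a genuine obstacle.
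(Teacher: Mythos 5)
Your proposal is correct and follows essentially the same route as the paper: compute the Gateaux derivative $\frac{d}{d\delta}\big|_{\delta=0}\mathcal{E}_{\Omega'}(u+\delta v)=\int_{\Omega'}Dv(\Nabla u)\,dm$ via Lemma \ref{lm:g}(ii), invoke convexity to identify critical points with minimizers, and for the global case use the integrability (Cauchy--Schwarz) bound $\int_\Omega Dv(\Nabla u)\,dm\le 2\mathcal{E}_\Omega(v)^{1/2}\mathcal{E}_\Omega(u)^{1/2}$, which is exactly what your dominated-convergence domination amounts to. The extra details you supply (zero extension, density of $\mathcal{C}^\infty_c$, the $\mathcal{C}^1$-but-not-$\mathcal{C}^2$ issue at $Du=0$) are all consistent with the paper's argument.
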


\begin{proof}
The first claim immediately follows from the calculation
\begin{align*}
\frac{d}{d\delta}\Big|_{\delta=0} \mathcal{E}_{\Omega'}(u+\delta v)
&= \int_{\Omega'} \frac{d}{d\delta} \Big|_{\delta=0} \left(\frac{1}{2} F^{*2} \big( x,Du(x)+\delta Dv(x) \big) \right) \,m(dx) \\
&= \int_{\Omega'} Dv(\Nabla u) \,dm,
\end{align*}
where for the second equation we used Lemma \ref{lm:g}(ii). For the second claim, in addition we take the estimate
$\int_{\Omega} Dv(\Nabla u) \,dm \le 2 \mathcal{E}_{\Omega}(v)^{1/2} \mathcal{E}_{\Omega}(u)^{1/2}$
into account.
\end{proof}

We also introduce a weighted Laplacian associated with a Riemannian structure
induced from the gradient vector field of some function.
Given a function $u \in H^1_{\mathrm{loc}}(\Omega)$, we define the Riemannian tensor $g^{(u)}$ on $M$ by
\begin{equation}\label{g-u}
g^{(u)}(x):= g_{\Nabla u(x)}:=g\big( x,\Nabla u(x) \big) =\Big( g_{ij}\big( x,\Nabla u(x) \big) \Big)_{i,j}
\end{equation}
for each $x\in M$ where $\Nabla u(x) \in T_xM$ is well-defined and nonzero.
Otherwise, we put $g^{(u)}(x):=g(x,Z(x))$ for some fixed nonvanishing vector field $Z$ on $M$.
Note that its inverse is given by
\[ g^{(u)}(x)^{-1} :=g^{*(u)}(x)=g^*\big( x,Du(x) \big). \]
For each $u \in H_{\mathrm{loc}}^1(\Omega)$, define the weighted Laplacian $\Delta^{(u)}$ acting on
functions $w \in H^1_{\mathrm{loc}}(\Omega)$ in the sense of distributions by $\Delta^{(u)}w:=\mathrm{div}\, ({g^{*(u)}}Dw)$.
Here
\[ g^{*(u)}(x) Dw(x) =\sum_{i,j=1}^n g^*_{ij}\big( x,Du(x) \big)
 \frac{\partial w}{\partial x^i}(x) \frac{\partial}{\partial x^j} \in T_xM. \]

\begin{lemma}\label{lm:Lap}
For any $u \in H^1_{\mathrm{loc}}(\Omega)$, we have 
$\bm{\Delta}u=\Delta^{(u)}u $ in the sense of distributions on 
$\Omega$. More precisely, for all $v \in H_0^1(\Omega)$, it holds 
that 
\[ \int_{\Omega} v \Delta^{(u)}u \,dm =\int_{\Omega} v \bm{\Delta} u \,dm. \]
\end{lemma}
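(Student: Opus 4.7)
The plan is to reduce the identity to the pointwise equality $\Nabla u(x) = g^{*(u)}(x)\,Du(x)$ for almost every $x \in \Omega$, and then to invoke the defining identities of $\bm\Delta$ and $\Delta^{(u)}$ via the divergence theorem (\ref{eq:div}).

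First I would unfold both sides. By the definition of the Finsler Laplacian,
\[ \int_\Omega v\,\bm{\Delta}u\,dm = -\int_\Omega Dv(\Nabla u)\,dm \]
for every $v \in H_0^1(\Omega)$. On the other hand, $\Delta^{(u)}u = \mathrm{div}(g^{*(u)}Du)$ is a divergence in the sense of (\ref{eq:div}) applied to the weakly differentiable vector field $\Psi := g^{*(u)}Du$, so
\[ \int_\Omega v\,\Delta^{(u)}u\,dm = -\int_\Omega Dv\cdot\bigl(g^{*(u)}Du\bigr)\,dm. \]
Thus it suffices to show that $\Nabla u(x) = g^{*(u)}(x)\,Du(x)$ almost everywhere on $\Omega$, interpreted as an equality of vectors in $T_xM$.

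For the pointwise identification I would split according to whether $Du(x)$ vanishes. At every regular $x \in \Omega$ with $Du(x) \neq 0$, Lemma \ref{lm:g}(iii) gives
\[ J^*(x,Du(x)) = g^*(x,Du(x))\,Du(x) = \sum_{i,j=1}^n g^*_{ij}(x,Du(x))\,\frac{\partial u}{\partial x^i}(x)\,\frac{\partial}{\partial x^j}, \]
which by the definitions (\ref{eq:grad}) and (\ref{g-u}) is exactly $\Nabla u(x) = g^{*(u)}(x)\,Du(x)$. At points where $Du(x)=0$, the characterization of $J^*$ yields $\Nabla u(x)=J^*(x,0)=0$, and simultaneously $g^{*(u)}(x)\,Du(x)=g^{*(u)}(x)\cdot 0 = 0$, so the equality is trivial and independent of the (arbitrary) convention used to define $g^{(u)}$ on the set $\{Du=0\}$.

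The only point that requires a word of care is that $\Psi = g^{*(u)}Du$ must be a legitimate weakly differentiable vector field in order to apply (\ref{eq:div}); but once the pointwise identification $\Psi = \Nabla u$ is established a.e., this follows from the corresponding regularity of $\Nabla u$. Since both sides of the asserted integral identity reduce to $-\int_\Omega Dv(\Nabla u)\,dm$, the conclusion follows for every $v \in H_0^1(\Omega)$, which is the claim.
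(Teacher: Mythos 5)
Your proposal is correct and follows essentially the same route as the paper: unfold both Laplacians via the defining identity (\ref{eq:div}) and identify $g^{*(u)}Du$ with $\Nabla u=J^*(\cdot,Du)$ pointwise using Lemma \ref{lm:g}(iii) and (\ref{eq:grad}). Your explicit treatment of the degenerate set $\{Du=0\}$ is a small but welcome elaboration of a point the paper leaves implicit.
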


\begin{proof}
Lemma \ref{lm:g}(iii), $(\ref{eq:grad})$ and the definition of the divergence $(\ref{eq:div})$ yield that
\[ \int_{\Omega} v \Delta^{(u)}u \,dm =-\int_{\Omega} Dv(g^{*(u)}Du) \,dm
 =-\int_{\Omega} Dv(\Nabla u) \,dm =\int_{\Omega} v \bm{\Delta}u \,dm. \]
\end{proof}

We will use the above lemma to show a generalized Laplacian comparison theorem in Theorem \ref{th:Lcom}
as well as Corollary \ref{cr:Lcom}.
Compare them with the following remark.

\begin{remark} \rm
Let $(\R^n,\| \cdot \|,m)$ be a Minkowski space equipped with the Lebesgue measure, and put $u(x)=f(\|x-y\|)$
for some nondecreasing $\mathcal{C}^2$-function $f$ on $\R_+$ and some fixed point $y \in \R^n$.
Then we have, for any $x \neq y$,
\begin{equation}\label{eq:Ld1}
\bm{\Delta}u(x) =f''(\|x-y\|) +\frac{n-1}{\|x-y\|} f'(\|x-y\|).
\end{equation}
In particular, $\bm{\Delta}(\| x-y \|^2) =2n$. If $f$ is 
nonincreasing, then an analogous result holds true for 
$v(x)=f(\|y-x\|)$, namely 
\begin{equation}\label{eq:Ld2}
\bm{\Delta}v(x) =f''(\|y-x\|) +\frac{n-1}{\|y-x\|} f'(\|y-x\|).
\end{equation}
This is because, for nonincreasing $f$, the right-hand side of 
$(\ref{eq:Ld1})$ coincides with $-\bm{\Delta}(-u)(x) 
=\overleftarrow{\bm{\Delta}}u(x)$, where 
$\overleftarrow{\bm{\Delta}}$ stands for the Finsler Laplacian for 
the reverse Finsler structure $\overleftarrow{F}(x,\xi)=\|-\xi\|$. 
Similarly, for nondecreasing $f$, the right-hand side of 
$(\ref{eq:Ld2})$ coincides with $\overleftarrow{\bm{\Delta}}v(x)$. 
\end{remark}

\begin{proof}
We deduce from Lemma \ref{lm:g}(ii) that
\[ Du(x) =f'(\|x-y\|) D(\|x-y\|)=\frac{f'(\|x-y\|)}{\|x-y\|} J(x-y). \]
For nondecreasing $f$, we find
\[ \bm{\Delta}u(x) =\mathrm{div}\, J^* \bigg( \frac{f'(\|x-y\|)}{\|x-y\|} J(x-y) \bigg)
 =\mathrm{{div}} \bigg( \frac{f'(\|x-y\|)}{\|x-y\|}(x-y) \bigg) \]
which implies the claim since $\sum_{i=1}^n (\partial \|x-y\|/\partial x^i)(x^i-y^i) =\|x-y\|$
by Euler's theorem (cf.\ \cite[Theorem 1.2.1]{BCS}).
\end{proof}

\section{The Heat Equation -- Global Solutions}\label{sc:global}

To simplify the presentation, we will assume throughout this chapter
that the general assumptions of the previous chapters are satisfied.
That is, $(M,F,m)$ is a Finsler space with a Finsler structure $F$ satisfying (\ref{eq:F-str2}) and a measure $m$ satisfying (2.1).
We remark, however, that instead of $(\ref{eq:F-str2})$ for the sequel it suffices to assume that
$F(x,\cdot)$ is strictly convex and differentiable on $T_xM \setminus \{0\}$ for a.e.\ $x$.

\begin{defn} \rm
We say that $u$ is a {\it global solution} to the heat equation 
$\partial_t u =\bm{\Delta}u$ on $[0,T] \times M$ if $u\in 
L^2([0,T],H^1_0(M)) \cap H^1([0,T],H^{-1}(M))$ and if, for every $t 
\in [0,T]$ and $v \in H^1_0(M)$, it holds that 
\begin{equation}\label{global-heat}
\int_M v \partial_t u_t \,dm =-\int_M Dv(\Nabla u_t) \,dm.
\end{equation}
Here $u_t(x):=u(t,x)$.
To be more precise, our global solutions are always global solutions \emph{with Dirichlet boundary conditions}.
\end{defn}

\begin{remark} \rm
(i) The condition $u \in L^2([0,T],H^1_0(M)) \cap H^1([0,T],H^{-1}(M))$ implies that
$u \in \mathcal{C}([0,T],L^2(M))$ (see, e.g., \cite[p.287]{Ev}).
Equivalently we could require that the above identity $(\ref{global-heat})$ holds
for all $v \in  L^2([0,T],H^1_0(M))$ and a.e.\ $t \in [0,T]$.

(ii)
If $M$ is compact, then every global solution $u$ to the heat equation is \emph{mass preserving},
i.e., $\int_M u_t \,dm=\int_M u_0 \,dm$ holds for all $t$.
Indeed, choosing constant $v \equiv 1 \in L^2([0,T],H^1_0(M))$ as test function yields the claim.
\end{remark}

Now we are going to construct a global solution to the heat equation
as gradient flow of the energy functional $\mathcal{E}^0$ on $L^2(M)$.
Since $\mathcal{E}^0$ is a convex function on the Hilbert space $L^2(M)$, we can apply Crandall and Liggett's classical
technique \cite{CL} (see also \cite{Ma}, \cite{AGS} for generalizations to curved spaces).
To simplify notation, we use $\mathcal{E}$ instead of $\mathcal{E}^0$ but take care to evaluate it only on $H^1_0(M)$.

Given $u \in H^1_0(M)$, we define
\[ |\nabla(-\mathcal{E})|(u)
 :=\max\bigg\{ 0,\limsup_{v \to u}\frac{\mathcal{E}(u)-\mathcal{E}(v)}{\| v-u \|_{L^2}} \bigg\}, \]
where $v \in H^1_0(M)$ and the convergence $v \to u$ is with respect to the $L^2$-norm.
Note that the convexity of $\mathcal{E}$ implies that $|\nabla(-\mathcal{E})|(u)=0$ holds
if and only if $u$ is a minimizer of $\mathcal{E}$ on $H^1_0(M)$.

\begin{lemma}\label{lm:gvec}
If $0< |\nabla(-\mathcal{E})|(u) <\infty$, then there exists unique $v \in L^2(M)$ satisfying
$\| v \|_{L^2}=|\nabla(-\mathcal{E})|(u)$ as well as
\[ \lim_{t \downarrow 0} \frac{\mathcal{E}(u) -\mathcal{E}(u+tv)}{t\| v \|_{L^2}} =|\nabla(-\mathcal{E})|(u). \]
\end{lemma}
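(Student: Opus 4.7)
The plan is to exploit the weak compactness of bounded sets in the Hilbert space $L^2(M)$ together with the convexity and lower semicontinuity of $\mathcal{E}^0$ (Lemma \ref{lm:E}) to extract a distinguished direction of steepest descent. Throughout set $s:=|\nabla(-\mathcal{E})|(u)\in(0,\infty)$.

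First I would pick a sequence $u_n\to u$ in $L^2(M)$, $u_n\neq u$, realizing the defining limsup, i.e., $r_n:=[\mathcal{E}(u)-\mathcal{E}(u_n)]/\|u_n-u\|_{L^2}\to s$, and normalize to unit vectors $w_n:=(u_n-u)/\|u_n-u\|_{L^2}$. By reflexivity of $L^2$ I extract a weakly convergent subsequence $w_n\rightharpoonup w$ with $\|w\|_{L^2}\le 1$. For fixed small $t>0$, eventually $t<\|u_n-u\|_{L^2}$, so $u+tw_n$ lies on the segment from $u$ to $u_n$ and convexity of $\mathcal{E}$ gives $\mathcal{E}(u+tw_n)\le\mathcal{E}(u)-tr_n$. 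Since convex lsc functionals on a Hilbert space are weakly lsc, passing to $\liminf_n$ yields $\mathcal{E}(u+tw)\le\mathcal{E}(u)-ts$; consequently $[\mathcal{E}(u)-\mathcal{E}(u+tw)]/t\ge s$ for all sufficiently small $t>0$. Comparing with the defining limsup then forces $s/\|w\|_{L^2}\le s$, whence $\|w\|_{L^2}\ge 1$, and so in fact $\|w\|_{L^2}=1$.

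Second, set $v:=sw$, so that $\|v\|_{L^2}=s$. The change of variable $\tau=ts$ turns the bound $[\mathcal{E}(u)-\mathcal{E}(u+\tau w)]/\tau\ge s$ into
\[ \frac{\mathcal{E}(u)-\mathcal{E}(u+tv)}{t\|v\|_{L^2}}\ge s \]
for small $t>0$, while the reverse inequality is the defining limsup. Convexity of $t\mapsto\mathcal{E}(u+tv)$ makes the quotient $[\mathcal{E}(u)-\mathcal{E}(u+tv)]/t$ non-increasing in $t>0$, so the genuine limit exists and coincides with $s\|v\|_{L^2}=s^2$, yielding the claimed identity.

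For uniqueness, suppose $v_1,v_2$ both satisfy the conditions and set $v_3:=(v_1+v_2)/2$. Strict convexity of the Hilbert norm gives $\|v_3\|_{L^2}<s$ unless $v_1=v_2$, whereas convexity of $\mathcal{E}$ yields
\[ \frac{\mathcal{E}(u)-\mathcal{E}(u+tv_3)}{t\|v_3\|_{L^2}}\ge\frac{1}{2\|v_3\|_{L^2}}\bigg(\frac{\mathcal{E}(u)-\mathcal{E}(u+tv_1)}{t}+\frac{\mathcal{E}(u)-\mathcal{E}(u+tv_2)}{t}\bigg), \]
whose limit as $t\downarrow 0$ equals $s^2/\|v_3\|_{L^2}>s$, contradicting the upper bound $s$ coming from the defining limsup. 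Hence $v_1=v_2$. The main obstacle is the norm-identification step in the first stage: the weak limit $w$ a priori only satisfies $\|w\|_{L^2}\le 1$, and one must leverage the matching descent rate to rule out loss of mass in passing to the weak limit; everything else is either convexity bookkeeping or a direct consequence of the slope definition.
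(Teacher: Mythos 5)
Your uniqueness argument and the monotonicity-of-difference-quotients step in the second stage are fine and essentially match the paper. The gap is in the first stage, where you identify the weak limit $w$ and its norm. Having chosen $u_n\to u$ in $L^2$ realizing the limsup, you assert that for fixed small $t>0$ ``eventually $t<\|u_n-u\|_{L^2}$''; this is impossible, since $u_n\to u$ forces $\|u_n-u\|_{L^2}\to 0$, so for every fixed $t>0$ one eventually has $t>\|u_n-u\|_{L^2}$ and $u+tw_n$ lies \emph{beyond} $u_n$ on the ray from $u$. Convexity then gives the reverse inequality $\mathcal{E}(u+tw_n)\ge\mathcal{E}(u)-tr_n$, which points the same way as weak lower semicontinuity and yields nothing. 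Worse, the inequality you want to obtain in the limit, $\mathcal{E}(u+tw)\le\mathcal{E}(u)-ts$ at a \emph{fixed} $t>0$, is false in general: already for the convex function $\mathcal{E}(x)=-x+x^2$ on $\R$ at $u=0$ one has $s=1$ and $w=1$, but $\mathcal{E}(tw)=-t+t^2>-t=\mathcal{E}(u)-ts$ for every $t>0$; the same failure occurs for the Finsler energy, which grows quadratically along rays. Only the limit version as $t\downarrow 0$ is true, and your argument does not reach it. This is not cosmetic: without it you cannot exclude $\|w\|_{L^2}<1$, nor even $w=0$, since the norm is only weakly lower semicontinuous.

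The underlying obstruction is that the descent information carried by the maximizing sequence lives at scales $\tau\le\|u_n-u\|_{L^2}\to 0$, whereas weak lower semicontinuity must be applied at a fixed scale $t$. The paper avoids this mismatch by staying at scale zero throughout: it rescales each increment $\hat v_n:=u_n-u$ to $v_n$ of norm $s$, uses monotonicity of the convex difference quotient to get $\lim_{t\downarrow 0}[\mathcal{E}(u)-\mathcal{E}(u+tv_n)]/(t\|v_n\|_{L^2})\ge r_n$, and then applies your own midpoint trick to the pair $(v_i,v_j)$ to force $\|v_i+v_j\|_{L^2}\to 2s$; the parallelogram law then shows that $(v_n)$ is Cauchy in the strong $L^2$-topology, so no weak limits are needed. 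To salvage your write-up you must replace the fixed-$t$ weak-lsc step by an argument of this kind (or invoke nonemptiness of the subdifferential of a convex lower semicontinuous functional on a Hilbert space, which is what the lemma amounts to).
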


\begin{proof}
Take a sequence $\{ \hat{v}_i \}_{i \in \N} \subset H^1_0(M) \setminus \{0\}$ such that
\[ \lim_{i \to \infty}\frac{\mathcal{E}(u)-\mathcal{E}(u+\hat{v}_i)}{\| \hat{v}_i \|_{L^2}}
 =|\nabla(-\mathcal{E})|(u). \]
We put $v_i:=(|\nabla(-\mathcal{E})|(u)/\| \hat{v}_i \|_{L^2}) \cdot \hat{v}_i$
and deduce from the convexity of $\mathcal{E}$ that
\[ \lim_{i \to \infty} \lim_{t \downarrow 0} \frac{\mathcal{E}(u) -\mathcal{E}(u+tv_i)}{t\| v_i \|_{L^2}}
 \ge \lim_{i \to \infty} \frac{\mathcal{E}(u) -\mathcal{E}(u+\hat{v}_i)}{\| \hat{v}_i \|_{L^2}}
 =|\nabla(-\mathcal{E})|(u). \]
Thus we have
$\lim_{i \to \infty} \lim_{t \downarrow 0} \{ \mathcal{E}(u) -\mathcal{E}(u+tv_i) \}/t\| v_i \|_{L^2}=|\nabla(-\mathcal{E})|(u)$.
Moreover, for any $i,j \ge 1$, we see
\begin{align*}
|\nabla(-\mathcal{E})|(u)
&\ge \lim_{t \downarrow 0} \frac{\mathcal{E}(u) -\mathcal{E}(u+t(v_i+v_j)/2)}{t\| (v_i+v_j)/2 \|_{L^2}} \\
&\ge \frac{|\nabla(-\mathcal{E})|(u)}{\| (v_i+v_j)/2 \|_{L^2}} \lim_{t \downarrow 0} \bigg\{
 \frac{\mathcal{E}(u) -\mathcal{E}(u+tv_i)}{2t\| v_i \|_{L^2}}
 +\frac{\mathcal{E}(u) -\mathcal{E}(u+tv_j)}{2t\| v_j \|_{L^2}} \bigg\}.
\end{align*}
This implies that $\lim_{i,j \to \infty} \| v_i+v_j \|_{L^2} =2|\nabla(-\mathcal{E})|(u)$.
Hence $\{ v_i \}_{i \in \N}$ is a Cauchy sequence and converges to some $v \in L^2(M)$.
Uniqueness is deduced in a similar way.
\end{proof}

We define $\nabla(-\mathcal{E})(u):=v$ using $v \in L^2(M)$ as in Lemma \ref{lm:gvec} above
and call $\nabla(-\mathcal{E})(u)$ the {\it gradient vector} of $-\mathcal{E}$ at $u$.
We simply set $\nabla(-\mathcal{E})(u):=0 \in L^2(M)$ if $|\nabla(-\mathcal{E})|(u)=0$.

For $u_0 \in H^1_0(M)$ and $\delta>0$, we denote by $U_{\delta}(u_0) \in H^1_0(M)$ the unique minimizer of the function
\begin{equation}\label{eq:disc}
u\ \mapsto\ \mathcal{E}(u)+\frac{\| u-u_0 \|_{L^2}^2}{2\delta}.
\end{equation}
This can be regarded as a discrete approximation of a gradient flow of $\mathcal{E}$.
In fact, $(U_{t/n})^n(u_0)$ converges to a continuous curve $u:\R_+ \rightarrow H^1_0(M)$ with $u(0)=u_0$
as $n$ goes to infinity, and $u_t:=u(t)$ satisfies the following properties
(see, e.g., \cite[Theorem 1.13 \& Section 2]{Ma}):
\begin{itemize}
\item[(i)]
The curve $t \mapsto u_t$ in $L^2(M)$ is locally Lipschitz continuous on $(0,\infty)$ and satisfies,
for a.e.\ $t \ge 0$,
\begin{equation}\label{eq:u/d}
\lim_{\delta \to 0} \frac{\| u_{t+\delta}-u_t \|_{L^2}}{|\delta|} =|\nabla(-\mathcal{E})|(u_t).
\end{equation}
In particular, we have $|\nabla(-\mathcal{E})|(u_t)<\infty$ at every $t>0$.

\item[(ii)]
For a.e.\ $t \ge 0$, it holds that
\begin{equation}\label{eq:E/d}
\lim_{\delta \to 0} \frac{\mathcal{E}(u_t) -\mathcal{E}(u_{t+\delta})}{\delta} =|\nabla(-\mathcal{E})|(u_t)^2.
\end{equation}
\end{itemize}

Thanks to (i) and (ii) above, a similar discussion to the proof of Lemma \ref{lm:gvec} ensures that, for a.e.\ $t>0$,
\begin{equation}\label{eq:diff}
\lim_{\delta \to 0} \bigg\| \frac{u_{t+\delta} -u_t}{\delta} -\nabla(-\mathcal{E})(u_t) \bigg\|_{L^2} =0.
\end{equation}
In other words, $\partial_t u_t=\nabla(-\mathcal{E})(u_t)$ in the weak sense.
If we replace the limit with the right limit $\lim_{\delta \downarrow 0}$, then equations $(\ref{eq:u/d})$ and $(\ref{eq:E/d})$
hold for all $t \ge 0$ and $(\ref{eq:diff})$ holds for all $t>0$.
In addition, we find
\begin{equation}\label{eq:asym}
\lim_{\delta \downarrow 0} \frac{\| u_{t+\delta}-U_{\delta}(u_t) \|_{L^2}}{\delta}=0
\end{equation}
for all $t>0$ along the same lines as \cite[Lemma 6.4]{Ogra}.

\begin{thm}\label{th:glob}
For each $u_0 \in H^1_0(M)$ and $T>0$, there exists a global 
solution $u$ to the heat equation which lies in $L^2([0,T],H^1_0(M)) 
\cap H^1([0,T],L^2(M))$. Moreover, for each $t \in (0,T)$, the 
distributional Laplacian $\bm{\Delta}u_t$ is absolutely continuous 
with respect to $m$ and its density function is 
$\nabla(-\mathcal{E})(u_t)$. In particular, $\partial_t u_t 
=\bm{\Delta}u_t$ in the weak sense $($see $(\ref{eq:diff}))$ and we 
have 
\begin{equation}\label{e-decay}
\lim_{\delta \downarrow 0} \frac{\mathcal{E}(u_t) -\mathcal{E}(u_{t+\delta})}{\delta}
 =|\nabla(-\mathcal{E})|(u_t)^2 =\| \nabla(-\mathcal{E})(u_t) \|_{L^2}^2
 =\| \bm{\Delta}u_t \|_{L^2}^2
\end{equation}
for all $t>0$.
\end{thm}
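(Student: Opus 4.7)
My plan is to take the curve $u_t$ produced by the minimizing-movement scheme, together with its established properties $(\ref{eq:u/d})$--$(\ref{eq:asym})$, as the starting point and to reduce the theorem to a single identification: that for each $u \in H^1_0(M)$ with $|\nabla(-\mathcal{E})|(u) < \infty$ the distributional Laplacian $\bm{\Delta}u$ is an $L^2$-function which coincides with $\nabla(-\mathcal{E})(u)$. Once this is in place, the abstract gradient-flow ODE $\partial_t u_t = \nabla(-\mathcal{E})(u_t)$ from $(\ref{eq:diff})$ is literally the weak heat equation $(\ref{global-heat})$, and the chain of equalities $(\ref{e-decay})$ follows by combining $(\ref{eq:E/d})$ with right limits (valid for all $t \ge 0$ as recorded above), the norm identity $|\nabla(-\mathcal{E})|(u_t) = \|\nabla(-\mathcal{E})(u_t)\|_{L^2}$ from Lemma \ref{lm:gvec}, and the identification itself.

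Before turning to this identification I would dispose of the regularity claim. Since $(\ref{eq:E/d})$ makes $\mathcal{E}(u_t)$ nonincreasing in $t$ and each $u_t$ lies in $H^1_0(M)$, I get $u \in L^\infty([0,T], H^1_0(M)) \subset L^2([0,T], H^1_0(M))$. Combining $(\ref{eq:u/d})$ with the integrated form of $(\ref{eq:E/d})$ yields
\[ \int_0^T \|\partial_t u_t\|_{L^2}^2\,dt = \int_0^T |\nabla(-\mathcal{E})|(u_t)^2\,dt = \mathcal{E}(u_0) - \mathcal{E}(u_T) < \infty, \]
so $u \in H^1([0,T], L^2(M))$.

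For the identification I would rely on the G\^ateaux formula $(d/ds)|_{s=0}\mathcal{E}(u+sv) = \int_M Dv(\Nabla u)\,dm$ already computed in the proof of Lemma \ref{lm:wharm}. Convexity of $\mathcal{E}$ promotes it to the subgradient inequality $\mathcal{E}(u+sv) - \mathcal{E}(u) \ge s\int_M Dv(\Nabla u)\,dm$ for all $v \in H^1_0(M)$ and $s \in \R$, which forces the linear functional $\ell(v) := -\int_M Dv(\Nabla u)\,dm$ to satisfy $|\ell(v)| \le |\nabla(-\mathcal{E})|(u) \cdot \|v\|_{L^2}$ on $H^1_0(M)$. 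Density of $H^1_0(M)$ in $L^2(M)$ together with Riesz representation then produces a unique $\phi \in L^2(M)$ with $\ell(v) = \int_M \phi v \,dm$ and $\|\phi\|_{L^2} \le |\nabla(-\mathcal{E})|(u)$; by definition this is precisely the statement that $\bm{\Delta}u$ is absolutely continuous with respect to $m$ with density $\phi$. Testing $|\nabla(-\mathcal{E})|(u)$ along a sequence in $H^1_0(M)$ approximating $\phi$ in $L^2$ reverses the norm inequality, and the uniqueness clause in Lemma \ref{lm:gvec} then forces $\phi = \nabla(-\mathcal{E})(u)$. The main obstacle I expect is the one-sided bound $|\ell(v)| \le |\nabla(-\mathcal{E})|(u) \cdot \|v\|_{L^2}$: $|\nabla(-\mathcal{E})|$ is defined as a $\limsup$ only in the $L^2$-topology while $\ell(v)$ naturally pairs with the $H^1$-topology, and the gap must be bridged by exploiting monotonicity of the convex difference quotient $s \mapsto (\mathcal{E}(u+sv)-\mathcal{E}(u))/s$ to relate its one-sided limit at $0$ (which equals $-\ell(v)$ by G\^ateaux differentiability) to the full $\limsup$.
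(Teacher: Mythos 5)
Your proposal is correct, but it reaches the crucial identification $\bm{\Delta}u_t=\nabla(-\mathcal{E})(u_t)\,m$ by a genuinely different route than the paper. The paper never isolates a static subdifferential lemma: it proves the integrated identity $(\ref{eq:w-heat})$ for \emph{time-dependent} test functions $v\in L^2([0,T],H^1_0(M))\cap H^1([0,T],L^2(M))$ by perturbing the discrete minimizers $w^t_\delta=U_\delta(u_t)$ of $(\ref{eq:disc})$ by $\varepsilon v_t$, using the Euler--Lagrange inequality for the minimizer together with $(\ref{eq:asym})$, and only at the end specializes to time-independent $v$ and invokes $(\ref{eq:diff})$ to read off the density of $\bm{\Delta}u_t$. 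You instead prove, at each fixed $u\in H^1_0(M)$ with finite slope, that $\ell(v)=-\int_M Dv(\Nabla u)\,dm$ is $L^2$-bounded with norm exactly $|\nabla(-\mathcal{E})|(u)$ and that its Riesz representative is $\nabla(-\mathcal{E})(u)$ --- the classical identification of the descending slope of a convex functional with the minimal subdifferential element --- and then feed this into the a.e.\ ODE $(\ref{eq:diff})$, which yields $(\ref{global-heat})$ for a.e.\ $t$ (sufficient by the remark following the definition of global solutions). Your route is cleaner and isolates the single Finsler-specific input, the G\^ateaux derivative formula from the proof of Lemma \ref{lm:wharm}; the paper's route produces in passing the stronger identity $(\ref{eq:w-heat})$, which in your setup would have to be recovered afterwards from $(\ref{global-heat})$ and $u\in H^1([0,T],L^2(M))$ by the product rule. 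One point to tighten: testing the slope along a sequence in $H^1_0(M)$ approximating $\phi$ does \emph{not} reverse the norm inequality --- it only reproduces $\|\phi\|_{L^2}\le|\nabla(-\mathcal{E})|(u)$, which you already have from the Riesz bound. The reverse inequality $|\nabla(-\mathcal{E})|(u)\le\|\phi\|_{L^2}$ comes from the subgradient inequality at $s=1$ (monotonicity of the convex difference quotient plus Cauchy--Schwarz applied to $\mathcal{E}(u)-\mathcal{E}(w)\le\int_M\phi\,(w-u)\,dm$), exactly the bridging device you name in your closing sentence; the approximating sequence is instead what you need to invoke the uniqueness argument of Lemma \ref{lm:gvec}, since $\phi$ need not lie in $H^1_0(M)$ and $\mathcal{E}(u+t\phi)$ may be infinite. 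With that reading, the argument closes and there is no gap.
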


\begin{proof}
Let $u:\R_+ \rightarrow H^1_0(M)$ be the gradient curve of $\mathcal{E}$ constructed as the limit curve
of the discrete approximation $(\ref{eq:disc})$.
Note that $\int_0^T \mathcal{E}(u_t)^2 \,dt \le T\mathcal{E}(u_0)^2 < \infty$ and
\[ \int_0^T \| \partial_t u_t \|_{L^2}^2 \,dt =\int_0^T |\nabla(-\mathcal{E})|(u_t)^2 \,dt
 =\mathcal{E}(u_0) -\mathcal{E}(u_T) <\infty. \]
Thus we observe $u \in L^2([0,T],H^1_0(M)) \cap H^1([0,T],L^2(M))$.

We shall show that, for any $v \in L^2([0,T],H^1_0(M)) \cap H^1([0,T],L^2(M))$ and $0 \le t_0<t_1 \le T$,
it holds that
\begin{equation}\label{eq:w-heat}
\int_M v_{t_1}u_{t_1} \,dm -\int_M v_{t_0}u_{t_0} \,dm
 =\int_{t_0}^{t_1} \int_M \bigg\{ \frac{\partial v_t}{\partial t} u_t -Dv_t(\Nabla u_t) \bigg\} \,dm \,dt.
\end{equation}

Fix $t \in (t_0,t_1)$.
Given small $\delta, \varepsilon>0$, consider unique $w^t_{\delta}:=U_{\delta}(u_t)$ minimizing the function
$w \mapsto \mathcal{E}(w)+\|w-u_t\|_{L^2}^2/2\delta$
and put $\tilde{w}^t_{\delta,\varepsilon}:=w^t_{\delta} +\varepsilon v_t$.
Then the choice of $w^t_{\delta}$ yields
\[ \mathcal{E}(\tilde{w}^t_{\delta,\varepsilon}) +\frac{\| \tilde{w}^t_{\delta,\varepsilon} -u_t \|_{L^2}^2}{2\delta}
 -\mathcal{E}(w^t_{\delta}) -\frac{\| w^t_{\delta} -u_t \|_{L^2}^2}{2\delta} \ge 0. \]
Firstly, we have
\[ \| \tilde{w}^t_{\delta,\varepsilon} -u_t \|_{L^2}^2 -\| w^t_{\delta} -u_t \|_{L^2}^2
 =2\varepsilon \langle w^t_{\delta}-u_t,v_t \rangle_{L^2} +\varepsilon^2 \|v_t\|_{L^2}^2, \]
and hence
\begin{align*}
\lim_{\varepsilon \to 0} \frac{\| \tilde{w}^t_{\delta,\varepsilon} -u_t \|_{L^2}^2 -\| w^t_{\delta} -u_t \|_{L^2}^2}{\varepsilon}
&=2\langle w^t_{\delta}-u_t,v_t \rangle_{L^2} \\
&=2\bigg\{ \int_M v_t w^t_{\delta} \,dm -\int_M v_tu_t \,dm \bigg\}.
\end{align*}

Secondly, it follows from Lemma \ref{lm:g}(ii) that
\begin{align*}
\lim_{\varepsilon \to 0} \frac{\mathcal{E}(\tilde{w}^t_{\delta,\varepsilon})-\mathcal{E}(w^t_{\delta})}{\varepsilon}
&=\lim_{\varepsilon \to 0} \frac{1}{2\varepsilon}
 \int_M \big\{ F^{*2}(Dw^t_{\delta} +\varepsilon Dv_t) -F^{*2}(Dw^t_{\delta}) \big\} \,dm \\
&=\int_M Dv_t(\Nabla w^t_{\delta}) \,dm.
\end{align*}
Therefore
\begin{align*}
\liminf_{\delta \downarrow 0} \frac{1}{\delta} \bigg\{ \int_M v_t w^t_{\delta} \,dm -\int_M v_tu_t \,dm \bigg\}
&\ge -\lim_{\delta \downarrow 0} \int_M Dv_t(\Nabla w^t_{\delta}) \,dm \\
&= -\int_M Dv_t(\Nabla u_t) \,dm.
\end{align*}
The second equality follows from the choice of $w^t_{\delta}$.
In fact, $\{ Dw^t_{\delta} \}_{\delta>0}$ is a Cauchy sequence converging to $Du_t$ as $\delta$ tends to zero.
Together with $(\ref{eq:asym})$, we have
\begin{align*}
&\liminf_{\delta \downarrow 0} \frac{1}{\delta} \bigg\{ \int_M v_{t+\delta}u_{t+\delta} \,dm -\int_M v_tu_t \,dm \bigg\} \\
&= \liminf_{\delta \downarrow 0} \frac{1}{\delta}\bigg\{ \int_M (v_{t+\delta}-v_t)u_{t+\delta} \,dm
 +\int_M v_tu_{t+\delta} \,dm -\int_M v_tu_t \,dm \bigg\} \\
&= \int_M \frac{\partial v_t}{\partial t}u_t \,dm
 +\liminf_{\delta \downarrow 0} \frac{1}{\delta} \bigg\{ \int_M v_t w^t_{\delta} \,dm -\int_M v_tu_t \,dm \bigg\} \\
&\ge \int_M \bigg\{ \frac{\partial v_t}{\partial t}u_t -Dv_t(\Nabla u_t) \bigg\} \,dm.
\end{align*}
We obtain the reverse inequality by exchanging $v$ with $-v$, and complete the proof of $(\ref{eq:w-heat})$.

By virtue of $(\ref{eq:diff})$, choosing time independent $v \in H^1_0(M)$ in $(\ref{eq:w-heat})$ shows that
\[ \int_M v \nabla(-\mathcal{E})(u_t) \,dm
 =\lim_{\delta \downarrow 0} \int_M v \frac{u_{t+\delta}-u_t}{\delta} \,dm
 =-\int_M Dv(\Nabla u_t) \,dm =\int_M v \bm{\Delta}u_t \,dm. \]
Hence $\bm{\Delta}u_t$ is absolutely continuous with respect to $m$
and the density function is nothing but $\nabla(-\mathcal{E})(u_t)$.
\end{proof}

The following proposition ensures that the gradient flow constructed as above is actually a unique solution
to the heat equation.
In particular, for each $u_0 \in L^2(M)$, it allows to construct a unique gradient curve $(u_t)_{t \ge 0}$ starting from $u_0$
as the limit of a sequence of gradient curves $(u^{(n)}_t)_{t \ge 0} \subset H^1_0(M)$
such that $u^{(n)}_0$ tends to $u_0$ in $L^2(M)$ as $n$ goes to infinity.
We denote this curve by $(P_tu_0)_{t \ge 0}$.
The map $P_t:u \mapsto P_t u$ defines a (non-expanding) semigroup of nonlinear operators on $L^2(M)$,
and we call it the {\it heat semigroup}.

\begin{prop}\label{pr:uniq}
For all global solutions $u,v$ to the heat equation, we have
\begin{align}
&\partial_t \left( \frac12 \|u_t\|^2_{L^2} \right) =-2\mathcal{E}(u_t), \label{l2-decay} \\
&\partial_t \left( \frac12 \|u_t-v_t\|^2_{L^2} \right) \le -2\kappa_M \mathcal{E}(u_t-v_t) \le 0 \label{l2-unique}
\end{align}
with $\kappa_M$ as introduced in $(\ref{eq:F-str4})$.
\end{prop}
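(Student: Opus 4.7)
The plan is to differentiate each squared $L^2$-norm in time, apply the weak heat equation $(\ref{global-heat})$ with an appropriate admissible test function, and then convert the resulting bilinear expression into an energy via the Legendre-map identities of Lemma \ref{lm:g}.

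For the first identity $(\ref{l2-decay})$, I would invoke the standard Hilbert-space chain rule for curves in $L^2([0,T],H^1_0(M)) \cap H^1([0,T],H^{-1}(M))$, which gives
\[ \partial_t \bigl( \tfrac12 \|u_t\|_{L^2}^2 \bigr) = \langle \partial_t u_t, u_t \rangle_{H^{-1} \times H^1_0} \]
for a.e.\ $t$. Since $u_t \in H^1_0(M)$ is itself an admissible test function, $(\ref{global-heat})$ yields $\langle \partial_t u_t, u_t \rangle = -\int_M Du_t(\Nabla u_t)\,dm$. The pointwise identity $\alpha\bigl(J^*(x,\alpha)\bigr) = F^*(x,\alpha)\,F(x,J^*(x,\alpha)) = F^{*2}(x,\alpha)$, which is immediate from the characterization of $J^*$ stated just after $(\ref{eq:Leg})$, then converts the integrand into $F^{*2}(x,Du_t) = F^2(x,\Nabla u_t)$, giving exactly $-2\mathcal{E}(u_t)$.

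For the contraction $(\ref{l2-unique})$, the same chain rule applied to $w_t := u_t - v_t \in H^1_0(M)$, combined with subtracting the two weak heat equations tested against the common function $w_t$, produces
\[ \partial_t \bigl( \tfrac12 \|u_t - v_t\|_{L^2}^2 \bigr) = -\int_M D(u_t - v_t)\bigl(\Nabla u_t - \Nabla v_t\bigr)\,dm. \]
The crucial input is then the dual version of the strong monotonicity $(\ref{eq:2uni-mod})$: by Lemma \ref{lm:g}(v),
\[ \bigl( J^*(x,\beta) - J^*(x,\alpha) \bigr)(\beta - \alpha) \ \ge\  \kappa_M\, F^{*2}(x, \beta - \alpha) \]
for all $\alpha,\beta \in T_x^*M$, with $\kappa_M$ as in $(\ref{eq:F-str4})$. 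Applying this pointwise with $\alpha = Dv_t(x)$, $\beta = Du_t(x)$, and using $\Nabla u = J^*(x,Du)$, one bounds the integrand below by $\kappa_M F^{*2}(x,D(u_t-v_t))$, hence the integral below by $2\kappa_M \mathcal{E}(u_t - v_t)$. Rearrangement gives $(\ref{l2-unique})$.

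The only subtle point, rather than a genuine obstacle, is the justification of the chain rule for $\tfrac12 \|u_t\|_{L^2}^2$; but this is classical for Hilbert-valued $H^1$-curves in the Gelfand triple $H^1_0 \hookrightarrow L^2 \hookrightarrow H^{-1}$ (cf.\ Evans, \emph{PDE}, \S5.9.2). The real substance of the argument lies in the strong monotonicity of the Legendre map $J^*(x,\cdot)$, which is precisely the $2$-uniform convexity of $F^*(x,\cdot)$ transferred via duality from the $2$-uniform smoothness of $F(x,\cdot)$; once that pointwise inequality is at hand, the $L^2$-contraction follows from a single integration.
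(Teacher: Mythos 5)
Your proposal is correct and follows essentially the same route as the paper: test the weak equation(s) with $u_t-v_t$, obtain $\partial_t\bigl(\tfrac12\|u_t-v_t\|_{L^2}^2\bigr)=-\int_M D(u_t-v_t)(\Nabla u_t-\Nabla v_t)\,dm$, read off $(\ref{l2-decay})$ from the case $v\equiv0$ via $Du(\Nabla u)=F^{*2}(Du)$, and invoke the dual strong monotonicity of $J^*$ from Lemma \ref{lm:g}(v) with constant $\kappa_M$ for $(\ref{l2-unique})$. The only difference is that you spell out the Gelfand-triple chain rule and the pointwise Legendre identities, which the paper leaves implicit.
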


\begin{proof}
Assuming that both $u$ and $v$ are global solutions to the heat equation and choosing $u-v$ as test function
(for each of these solutions) yields
\begin{align*}
\partial_t \left( \frac12\|u_t-v_t\|^2_{L^2} \right)
&= \int_M (u_t-v_t)( \partial_t u_t -\partial_t v_t) \,dm \\
&= -\int_M (Du_t-Dv_t)(\Nabla u_t -\Nabla v_t) \,dm.
\end{align*}
In the case $v \equiv 0$ the last term obviously coincides with $- 2\mathcal{E}(u_t)$ which proves the first claim.

In the general case, the last term of the previous identities can be estimated from above according to
Lemma \ref{lm:g}(v) which asserts
\[ (Du_t-Dv_t)(\Nabla u_t -\Nabla v_t) \ge \kappa_M F^{*2}(Du_t-Dv_t). \]
This proves the second claim.
\end{proof}

\begin{cor}\label{cr:cont}
For all global solutions $u,v$ to the heat equation, we have
\[ \|u_t-v_t\|_{L^2} \le e^{-\kappa_M \chi_M t} \cdot\|u_0-v_0\|_{L^2}. \]
If in addition $\int_M u_0 \,dm =\int_M v_0 \,dm$,
then in the estimate above $\chi_M$ can be replaced by $\overline\chi_M$, i.e.,
\[ \|u_t-v_t\|_{L^2} \le e^{-\kappa_M \overline\chi_M t}\cdot \|u_0-v_0\|_{L^2}; \]
if $v\equiv0$ then $\kappa_M$ can be replaced by $1$, i.e.,
\[ \|u_t\|_{L^2} \le e^{-\chi_M t} \cdot\|u_0\|_{L^2}. \]
\end{cor}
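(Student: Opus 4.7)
The plan is to combine the two differential inequalities of Proposition \ref{pr:uniq} with the Poincar\'e-type bounds built into the definitions of $\chi_M$ and $\overline\chi_M$, and then integrate via Gronwall's lemma.

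For the first estimate, note that the difference $u_t-v_t$ of two global solutions lies in $H^1_0(M)$ for each $t$, so the definition of $\chi_M$ yields $2\mathcal{E}(u_t-v_t) \ge \chi_M\|u_t-v_t\|_{L^2}^2$. Feeding this into $(\ref{l2-unique})$ gives
\[
\partial_t\bigl(\|u_t-v_t\|_{L^2}^2\bigr) \le -2\kappa_M\chi_M\|u_t-v_t\|_{L^2}^2,
\]
and Gronwall's lemma produces the claimed decay rate. For the third bound, I would instead start from $(\ref{l2-decay})$: since $\partial_t\bigl(\frac{1}{2}\|u_t\|_{L^2}^2\bigr) = -2\mathcal{E}(u_t) \le -\chi_M\|u_t\|_{L^2}^2$, no factor $\kappa_M$ appears. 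The cleaner rate here reflects the fact that when $v\equiv 0$ one avoids the application of Lemma \ref{lm:g}(v), which is the only place where the non-Hilbertian character of $F^*$ enters the proof of Proposition \ref{pr:uniq}.

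For the second bound, the key additional ingredient is mass preservation: as noted in the remark following the definition of a global solution, choosing the constant test function $v\equiv 1$ (legitimate when $M$ is compact, so that $1\in H^1_0(M)$) shows that $t\mapsto \int_M u_t\,dm$ is constant in time. Combined with the hypothesis $\int_M u_0\,dm = \int_M v_0\,dm$, this forces $\int_M(u_t-v_t)\,dm = 0$ for every $t$, so that $u_t-v_t$ is admissible in $(\ref{eq:nuM})$ and therefore satisfies $2\mathcal{E}(u_t-v_t)\ge \overline\chi_M\|u_t-v_t\|_{L^2}^2$. Replaying the Gronwall argument from the first step with $\overline\chi_M$ in place of $\chi_M$ finishes the proof. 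The only genuine subtlety is this mass-preservation step, since it relies on the compact setting (or some substitute ensuring that constants are legitimate test functions); the rest is a routine Gronwall integration of Proposition \ref{pr:uniq}.
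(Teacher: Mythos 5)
Your proof is correct and follows exactly the route the paper takes: combine the two differential inequalities of Proposition \ref{pr:uniq} with the variational definitions of $\chi_M$ and $\overline\chi_M$, then integrate via Gronwall. The mass-preservation step you single out (needed so that $u_t-v_t$ stays admissible in $(\ref{eq:nuM})$ for all $t$, which indeed requires constants to be legitimate test functions) is left implicit in the paper's one-line proof, and you handle it correctly.
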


\begin{proof}
The estimates follow immediately from Proposition \ref{pr:uniq} (together with the definition of $\chi_M$ and $\overline\chi_M$)
and an application of Gronwall's lemma.
\end{proof}

The previous are the usual contraction properties of gradient flows,
for $\mathcal{E}^0$ is $\kappa_M \chi_M$-convex on the Hilbert space $L^2(M)$ (Lemma \ref{lm:E}(iv)).
Recall that compactness of $M$ will imply $\kappa_M>0$ and $\overline\chi_M>0$.
A slightly modified argument will yield contraction in $L^p(M)$ for each $p$.

\begin{thm}\label{thm:cont}
For all $p\in[1,\infty]$ and all global solutions $u,v$ to the heat equation, we have
\[ \|u_t-v_t\|_{L^p} \le \exp\left(-\frac{4(p-1)}{p^2}\kappa_M \chi_M t\right) \cdot \|u_0-v_0\|_{L^p}. \]
If $v\equiv0$ then in the estimate above $\kappa_M$ can be replaced by $1$.
\end{thm}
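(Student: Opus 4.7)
The strategy is standard for $L^p$ contraction of nonlinear semigroups: test the two heat equations against a suitable power of $u_t-v_t$, use the 2-uniform smoothness of $F^*$ to convert the resulting bilinear pairing into a Dirichlet form of $|u_t-v_t|^{p/2}$, and then close the loop with the Poincar\'e inequality of Lemma \ref{lm:E}(iii) and Gronwall. The case $p=2$ is precisely Corollary \ref{cr:cont}; for general $p \in (1,\infty)$ we imitate that argument with the right power-law replacement of the test function, and the endpoints $p=1$ and $p=\infty$ are obtained by passing to the limit.

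\textbf{Main computation.} Fix $p \in (1,\infty)$ and set $w_t := u_t - v_t$, $h_t := |w_t|^{p/2}$. Formally, differentiating in time and plugging $\phi := |w_t|^{p-2}w_t$ as test function into the weak heat equations for $u$ and $v$ (and subtracting), one obtains
\begin{equation*}
\partial_t \!\int_M |w_t|^p\,dm
= -p(p-1)\!\int_M |w_t|^{p-2}\,D w_t(\Nabla u_t - \Nabla v_t)\,dm.
\end{equation*}
The crucial monotonicity estimate comes from Lemma \ref{lm:g}(iv)--(v) applied to $F^*$: swapping the roles of $Du_t$ and $Dv_t$ in that inequality and combining the two versions gives
\begin{equation*}
D w_t(\Nabla u_t - \Nabla v_t) \ge \kappa_M \max\bigl\{F^{*2}(Dw_t),\,F^{*2}(-Dw_t)\bigr\} \ge \kappa_M F^{*2}\bigl(D|w_t|\bigr),
\end{equation*}
since $D|w_t| = \pm Dw_t$ on $\{w_t>0\}$ and $\{w_t<0\}$, respectively. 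Writing $D h_t = \tfrac{p}{2}|w_t|^{p/2-1} D|w_t|$ and using the positive $2$-homogeneity of $F^{*2}$, we get $|w_t|^{p-2}F^{*2}(D|w_t|) = \tfrac{4}{p^2}F^{*2}(Dh_t)$, hence
\begin{equation*}
\partial_t \!\int_M |w_t|^p\,dm
\;\le\; -\frac{8(p-1)\kappa_M}{p}\,\mathcal{E}(h_t)
\;\le\; -\frac{4(p-1)\kappa_M \chi_M}{p}\,\|h_t\|_{L^2}^2
\;=\; -\frac{4(p-1)\kappa_M \chi_M}{p}\,\|w_t\|_{L^p}^p,
\end{equation*}
where the middle inequality is the Poincar\'e estimate $2\mathcal{E}(h_t) \ge \chi_M \|h_t\|_{L^2}^2$ for $h_t \in H^1_0(M)$. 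Gronwall then yields $\|w_t\|_{L^p}^p \le e^{-4(p-1)\kappa_M \chi_M t/p}\,\|w_0\|_{L^p}^p$, and extracting the $p$-th root gives the claimed exponent $4(p-1)\kappa_M\chi_M/p^2$. For $v\equiv 0$ the inequality $Du_t(\Nabla u_t) = F^{*2}(Du_t)$ holds with equality by the defining property of the Legendre transform (no loss of $\kappa_M$), so the factor $\kappa_M$ disappears.

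\textbf{Technicalities and main obstacle.} The routine parts are the time-differentiation (justified by $u,v \in H^1([0,T],L^2) \cap L^2([0,T],H^1_0)$, as in Proposition \ref{pr:uniq}) and the chain-rule identity $Dh_t = \tfrac{p}{2}|w_t|^{p/2-1}D|w_t|$, which is a standard Sobolev chain rule for Lipschitz compositions. The endpoints $p=1$ and $p=\infty$ give the trivial rate $0$, and follow either from passing $p\downarrow 1$ and $p\uparrow\infty$ in the established bound or from the standard arguments (Kato-type inequality in $L^1$, maximum principle in $L^\infty$). The genuine technical obstacle is the admissibility of the test function $\phi = |w_t|^{p-2}w_t$: for $p$ close to $1$ the map $s\mapsto |s|^{p-2}s$ is unbounded near $0$, and for large $p$ one must know $\phi_t \in H^1_0(M)$. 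The standard remedy is to replace $\phi$ by the truncated version $\phi^\epsilon := (|w_t|\vee\epsilon)^{p-2}w_t$ (which is bounded and lies in $H^1_0(M)$ whenever $w_t$ does), carry out the computation, observe that on $\{|w_t|<\epsilon\}$ the gradient of $\phi^\epsilon$ vanishes so no cross terms appear, and then let $\epsilon\downarrow 0$ by dominated/monotone convergence to recover the displayed differential inequality.
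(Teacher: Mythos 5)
Your proposal is correct and follows essentially the same route as the paper's proof: the test function $|w_t|^{p-2}w_t$, the monotonicity inequality from Lemma \ref{lm:g}(iv)--(v) giving the factor $\kappa_M$, the rewriting of $|w_t|^{p-2}F^{*2}$ as $\tfrac{4}{p^2}$ times the energy density of the $p/2$-power, the Poincar\'e bound via $\chi_M$, and Gronwall. The only cosmetic difference is that you work with the unsigned power $|w_t|^{p/2}$ and compensate for the asymmetry of $F^*$ by taking the maximum over $F^{*2}(\pm Dw_t)$, whereas the paper uses the signed power $|w_t|^{p/2}\,\mathrm{sign}(w_t)$; both devices are valid, and your handling of the truncation issue matches the approximation the paper itself invokes.
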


\begin{proof} Assume $1<p<\infty$.
(The cases $p=1$ and $p=\infty$ follow by approximation.)
Moreover, assume $u_t-v_t\in L^2(M)\cap L^p(M)$.
Then a slight modification of the proof of the previous proposition yields
\begin{align*}
&-\frac1p\partial_t \int |u_t-v_t |^p\,dm
 = -\int |u_t-v_t|^{p-1}\, \mbox{sign}(u_t-v_t)\, \partial_t (u_t - v_t) \,dm \\
&= \int D\left(|u_t-v_t|^{p-1}\, \mbox{sign}(u_t-v_t)\right)\, (\Nabla u_t - \Nabla v_t) \,dm \\
&= (p-1)\int |u_t-v_t|^{p-2}\, D(u_t-v_t)\, (\Nabla u_t - \Nabla v_t) \,dm \\
&\ge (p-1)\kappa_M\int |u_t-v_t|^{p-2}\, D(u_t-v_t)\cdot \Nabla (u_t -  v_t) \,dm \\
&= \frac4{p^2}(p-1)\kappa_M\int D\big(|u_t-v_t|^{p/2}\, \mbox{sign}(u_t-v_t)\big)\cdot
 \Nabla \big(|u_t -  v_t|^{p/2}\, \mbox{sign}(u_t-v_t)\big) \,dm \\
&\ge \frac4{p^2}(p-1)\kappa_M\chi_M\int \left|u_t-v_t\right|^{p} \,dm.
\end{align*}
To be rigorous, one should assume in the previous argumentation that $|u-v|$ is bounded
from above if $p>2$ or bounded away from $0$ if $p<2$, respectively.
To overcome this restriction, one can approximate $u$ and $v$ by bounded solutions
to the heat equation in the case $p>2$.
In the case $p<2$, one can approximate $|u-v|$ by $((u-v)^2+\varepsilon^2)^{1/2}$.

Obviously, the assumption $v\equiv0$ allows to replace $\kappa_M$ in the first inequality above by $1$.
The claim follows again by an application of Gronwall's lemma.
\end{proof}

Now let us switch from contraction estimates to integrated Gaussian estimates for the heat semigroup.
A preliminary step is the following:

\begin{lemma}\label{lm:psi}
Let $u$ be a global solution to the heat equation $\partial_t u 
=\bm{\Delta}u$ on $M$ and $\psi:M \rightarrow \R$ be a Lipschitz 
continuous function of bounded gradient $F(x,\Nabla \psi(x)) \le C$ 
for all $x \in M$. Then we have, for all $0\le s\le t$, 
\begin{equation}\label{davies}
\| e^{-\psi} u_t \|_{L^2} \le e^{C^2(t-s)} \| e^{-\psi} u_s \|_{L^2}.
\end{equation}
\end{lemma}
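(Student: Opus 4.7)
The plan is to differentiate $\|e^{-\psi}u_t\|_{L^2}^2$, insert the heat equation with test function $v=e^{-2\psi}u_t$, and bound the resulting cross term by Young's inequality together with the Finsler duality $\alpha(\xi)\le F^*(\alpha)F(\xi)$; a Gronwall application then yields the desired exponential estimate. Concretely, I would first check that $v_t:=e^{-2\psi}u_t$ is an admissible test function in $H^1_0(M)$ (the factor $e^{-2\psi}$ is bounded above and below if $\psi$ is bounded; in general one inserts a cutoff $\psi\wedge N$, proves the estimate, and then sends $N\to\infty$). Then, using the weak formulation of the heat equation (\ref{global-heat}) and the product rule $D(e^{-2\psi}u_t)=e^{-2\psi}(Du_t-2u_t\,D\psi)$,
\[
\tfrac{1}{2}\partial_t\|e^{-\psi}u_t\|_{L^2}^2
=\int_M e^{-2\psi}u_t\,\partial_t u_t\,dm
=-\int_M e^{-2\psi}F^2(\nabla u_t)\,dm+2\int_M e^{-2\psi}u_t\,D\psi(\nabla u_t)\,dm,
\]
where I have used $Du_t(\nabla u_t)=F^{*2}(Du_t)=F^2(\nabla u_t)$ from Lemma~\ref{lm:g}(iii).

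Next, the assumption $F(x,\nabla\psi(x))\le C$ is equivalent to $F^*(x,D\psi(x))\le C$, so the duality $\alpha(\xi)\le F^*(\alpha)\,F(\xi)$ applied pointwise gives $D\psi(\nabla u_t)\le C\,F(\nabla u_t)$. Using the elementary inequality $2ab\le a^2+b^2$ in the form $2|u_t|\,C\,F(\nabla u_t)\le C^2 u_t^2+F^2(\nabla u_t)$, the cross term is dominated:
\[
2\int_M e^{-2\psi}u_t\,D\psi(\nabla u_t)\,dm
\le C^2\int_M e^{-2\psi}u_t^2\,dm+\int_M e^{-2\psi}F^2(\nabla u_t)\,dm.
\]
Substituting, the ``bad'' gradient term on the right cancels half of the ``good'' dissipation term on the left:
\[
\tfrac{1}{2}\partial_t\|e^{-\psi}u_t\|_{L^2}^2
\le C^2\,\|e^{-\psi}u_t\|_{L^2}^2.
\]
Gronwall's lemma on $[s,t]$ immediately yields $\|e^{-\psi}u_t\|_{L^2}^2\le e^{2C^2(t-s)}\|e^{-\psi}u_s\|_{L^2}^2$, which is (\ref{davies}).

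The main obstacle lies in the Young-type bound for the cross term, because in the Finsler setting the duality $\alpha(\xi)\le F^*(\alpha)F(\xi)$ is \emph{one-sided}: the reverse bound $-\alpha(\xi)\le F^*(\alpha)F(\xi)$ is not automatic, so when $u_t<0$ the sign of $u_tD\psi(\nabla u_t)$ must be controlled with care. I would handle this either by noting that the heat semigroup preserves nonnegativity (so it suffices to prove the bound for $u_0\ge 0$ and then extend by linearity of the $L^2$-inner-product side of the calculation) or, more directly, by observing that the inequality $2u_t\,C\,F(\nabla u_t)\le C^2u_t^2+F^2(\nabla u_t)$ holds pointwise for every real $u_t$ (the case $u_t<0$ is trivial since the left side is nonpositive), and then combining it with $\mathrm{sign}(u_t)\,D\psi(\nabla u_t)\le C\,F(\nabla u_t)$ obtained from duality applied to whichever of $D\psi$ or $-D\psi$ is needed. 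The secondary technical point---admissibility of the test function $e^{-2\psi}u_t$ when $\psi$ is unbounded---is routine via truncation of $\psi$ and monotone convergence after the estimate is in hand.
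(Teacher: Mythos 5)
Your proof is correct and follows essentially the same route as the paper's: test the weak formulation against $e^{-2\psi}u_t$, expand by the product rule, bound the cross term via the duality $\alpha(\xi)\le F^*(\alpha)F(\xi)$ and Young's inequality so that it is absorbed by the dissipation term, and conclude with Gronwall. The sign/one-sidedness subtlety you flag is present (silently) in the paper's own argument, which simply writes $2F(\Nabla u_r)F^*(D\psi)|u_r|$ at the corresponding step, so your explicit discussion of it is if anything more careful.
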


\begin{proof}
Straightforward calculations yield
\begin{align*}
&\frac{1}{2} \| e^{-\psi} u_t \|^2_{L^2} -\frac{1}{2} \| e^{-\psi} u_s\|^2_{L^2}
 = \int_s^t \int_M e^{-2\psi} u_r \partial_r u_r \,dm \,dr \\
&= -\int_s^t \int_M D(e^{-2\psi} u_r)(\Nabla u_r) \,dm \,dr \\
&= -\int_s^t \int_M \big\{ e^{-2\psi} Du_r(\Nabla u_r) -2u_r e^{-2\psi} D\psi(\Nabla u_r) \big\} \,dm \,dr \\
&\le \int_s^t \int_M \big\{ -F^2(\Nabla u_r) e^{-2\psi} +2F(\Nabla u_r) F^*(D\psi) |u_r| e^{-2\psi} \big\} \,dm \,dr \\
&\le \int_s^t \int_M F^2(\Nabla \psi) u_r^2 e^{-2\psi} \,dm \,dr
 \le C^2 \int_s^t \| e^{-\psi} u_r \|^2_{L^2} \,dr.
\end{align*}
Together with Gronwall's lemma, this implies the desired estimate.
\end{proof}

\begin{thm}[Integrated Gaussian Estimates \'a la Davies]\label{th:Dav}
For every $u,v \in L^2(M)$, we have
\begin{equation}
\int_M u P_tv \,dm \le \exp \bigg( -\frac{d^2(u,v)}{4t} \bigg) \|u\|_{L^2} \|v\|_{L^2},
\end{equation}
where $d(u,v)=\essi \{ d(x,y) \,:\, x\in\mathrm{supp}[u], \, y \in \mathrm{supp}[v] \}$.
\end{thm}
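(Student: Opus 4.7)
My plan is to adapt the classical Davies exponential-weight method to the Finsler setting, using Lemma \ref{lm:psi} as the central analytic input. The starting point is that for any measurable $\psi:M \to \R$, Cauchy--Schwarz yields
\[ \int_M u\,P_t v\,dm \ \le\ \int_M |u|\,|P_t v|\,dm \ \le\ \|e^{\psi}u\|_{L^2}\,\|e^{-\psi}P_t v\|_{L^2}. \]
If $\psi$ is Lipschitz with $F(x,\Nabla \psi(x)) \le C$ almost everywhere, Lemma \ref{lm:psi} (applied to the global solution $r \mapsto P_r v$ between times $0$ and $t$) further bounds $\|e^{-\psi}P_t v\|_{L^2} \le e^{C^2 t}\|e^{-\psi}v\|_{L^2}$, so the whole integral is at most $e^{C^2 t}\|e^{\psi}u\|_{L^2}\,\|e^{-\psi}v\|_{L^2}$.

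The remaining task is to choose $\psi$ that exploits the separation of supports. I set $D := d(u,v)$, $V := \mathrm{supp}[v]$, $\rho(x) := \inf_{y \in V} d(x,y)$, and $\psi(x) := -C\rho(x)$ for a parameter $C>0$ to be optimized at the end. On $V$ the function $\rho$ vanishes, so $\|e^{-\psi}v\|_{L^2} = \|v\|_{L^2}$; while on $\mathrm{supp}[u]$ we have $\rho(x) \ge D$ almost everywhere (by the very definition of $D$ as the essential infimum of $d(x,y)$ over the supports), so $\|e^{\psi}u\|_{L^2} \le e^{-CD}\|u\|_{L^2}$. Combining with the previous step,
\[ \int_M u\,P_t v\,dm \ \le\ \exp(C^2 t - CD)\,\|u\|_{L^2}\,\|v\|_{L^2}, \]
and the optimal choice $C = D/(2t)$ produces the exponent $-D^2/(4t)$, which is precisely the asserted bound.

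The main obstacle is to verify that $\psi=-C\rho$ actually satisfies $F(x,\Nabla \psi) \le C$ almost everywhere, which is delicate because of the asymmetry of the Finsler distance. My approach is to argue directly from the triangle inequality: $d(x,y) \le d(x,x') + d(x',y)$ together with the infimum defining $\rho$ gives $\rho(x) - \rho(x') \le d(x,x')$ for all $x,x' \in M$, hence $\psi(y) - \psi(x) = C[\rho(x) - \rho(y)] \le C\,d(x,y)$ for all $x,y$. Plugging this into the identification $F(x,\Nabla \psi(x)) = \limsup_{y \to x}[\psi(y)-\psi(x)]/d(x,y)$ recorded in Section \ref{sc:pre} yields $F(x,\Nabla \psi) \le C$ wherever $\Nabla \psi$ exists, which is almost everywhere by Lipschitz continuity. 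It is crucial here to take $\rho(x) := \inf_y d(x,y)$ rather than $\inf_y d(y,x)$, so that the resulting Lipschitz estimate is in terms of the \emph{forward} distance $d(x,y)$ appearing in the definition of $F \circ \Nabla$. A last minor point is the degenerate case $D = 0$, in which the claim reduces to the $L^2$-nonexpansion $\|P_t v\|_{L^2} \le \|v\|_{L^2}$ from Theorem \ref{thm:cont}.
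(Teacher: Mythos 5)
Your proposal is correct and follows essentially the same route as the paper: Cauchy--Schwarz with the weight $e^{\psi}$, Lemma \ref{lm:psi} applied to $\psi=-C\,d(\cdot,\mathrm{supp}[v])$, and optimization $C=d(u,v)/(2t)$. Your extra care in verifying $F(x,\Nabla\psi)\le C$ via the triangle inequality (and in choosing the forward distance $\inf_y d(x,y)$) is exactly the point the paper leaves implicit.
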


\begin{proof}
Given $u$ and $v$, apply Lemma \ref{lm:psi} to the function $\psi(x)=-Cd(x,v)$,
where $d(x,v) :=\essi \{ d(x,y) \,:\, y \in \mathrm{supp}[v] \}$ and $C>0$ is a constant to be fixed below.
Then
\begin{align*}
\int_M u P_t v \,dm &\le \| e^{-\psi} P_tv \|_{L^2} \| e^{\psi}u \|_{L^2}
 \le e^{C^2t}\| e^{-\psi} v \|_{L^2} \| e^{\psi} u \|_{L^2} \\
&\le e^{C^2t-Cd(u,v)} \|v\|_{L^2} \|u\|_{L^2}.
\end{align*}
Choosing $C=d(u,v)/2t$ now yields the claim.
\end{proof}

\section{The Heat Equation -- Local Solutions}\label{sc:local}

This chapter is devoted to studying the local regularity of solutions to the heat equation.
Formulation of results and proofs follow classical lines.
For the elliptic case, similar results have already been derived by Shen \cite{Shlap} and by
Belloni, Kawohl and Juutinen \cite{BKJ}. See also \cite{Di}, \cite{GS}.

Throughout the chapter, the assumptions $(\ref{eq:F-str2})$ and $(2.1)$ will be in force.

\begin{defn}
Given an open subset $\Omega\subset M$ and an open interval 
$I\subset\R$, we say that a real function $u$ on $I\times \Omega$ is 
a local solution to the heat equation $\partial_t u=\bm{\Delta}u$ on 
$I\times \Omega$ if $u\in L^2_{\mathrm{loc}}(I \times \Omega)$ with 
$F^*(Du) \in L^2_{\mathrm{loc}}(I\times\Omega)$ and for every 
smooth, compactly supported $v$ on $I\times\Omega$ $($or, 
equivalently, for  every $v \in H^1_c(I\times\Omega))$ 
\begin{equation}\label{local-heat}
\int_I\int_\Omega u_t \partial_t v_t \,dm \,dt =\int_I\int_\Omega Dv_t (\nabla u_t) \,dm\,dt.
\end{equation}
\end{defn}

\begin{remark}\rm
A function $u$ being a local solution to the heat equation implies that
$C_1u+C_2$ is a  local solution for every $C_1\in\R_+$ and every $C_2\in\R$.
In particular, constants are local solutions to the heat equation.
In general, it will not imply that $-u$ is
a local solution.
\end{remark}

\begin{example}\label{heat sol}\rm
Let $\| \cdot \|$ be any smooth, strictly convex Minkowski norm on $\R^n$,
put $F(x,\cdot)=\| \cdot \|$ for all $x$ and choose $m$ to be the Lebesgue measure.
Then for each fixed $y\in\R^n$ the function
\begin{equation}\label{gauss}
u(t,x)=t^{-n/2}\exp(-\|y-x\|^2/4t)\end{equation} is a local solution 
to the heat equation $\partial_t u=\bm{\Delta}u$ on 
$\R_+\times\R^n$. More generally, $u(t,x)=f(t,\|y-x\|)$ is a local 
solution to the heat equation for each smooth function 
$f:\R_+^2\to\R$ satisfying $\partial_r f(t,r) \le 0$ and 
\begin{equation}\label{rad-lapl}
\partial^2_rf(t,r)+\frac{n-1}{r}\partial_rf(t,r)=\partial_tf(t,r),\qquad \partial_rf(t,0)=0.
\end{equation}

If $f$ satisfies $\partial_rf(t,r)\ge0$ and $(\ref{rad-lapl})$, then the function $v(t,x)=f(t,\|x-y\|)$
is a local solution to the heat equation.
If $\| \cdot \|$ is even a norm (i.e., if in addition it is symmetric),
then the latter holds true without any restriction on the sign of $\partial_rf(t,r)$.
\end{example}

Note that the function $u$ in $(\ref{gauss})$ is $\mathcal{C}^2$ in
the space variable at $x=y$ if and only if $\| \cdot \|$ is a Hilbert norm.

\begin{prop}[Harnack Inequality]\label{prop:harnack}
Every local solution to the heat equation $\partial_t 
u=\bm{\Delta}u$ on $I\times\Omega$ is H\"older continuous $($more 
precisely, it is almost everywhere equal to a H\"older continuous 
function$)$. 

Continuous local solutions satisfy the parabolic Harnack inequality and the strong maximum principle.
\end{prop}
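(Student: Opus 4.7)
The plan is to exploit Lemma \ref{lm:Lap} in order to view the nonlinear equation $\partial_t u=\bm{\Delta}u$ as a \emph{linear} divergence-form parabolic equation with bounded measurable, uniformly elliptic coefficients, and then invoke the classical De Giorgi--Nash--Moser theory. Given a local solution $u$, on any coordinate chart $U \subset \Omega$ I would set
\[ a^{ij}(t,x) := g^*_{ij}\big( x, Du_t(x) \big), \]
with any bounded measurable convention at points where $Du_t(x)=0$. Lemma \ref{lm:g}(iii) ensures that $\alpha \mapsto g^*_{ij}(x,\alpha)$ is continuous off the zero section, while the dual form of (\ref{eq:F-str2}) provided by Lemma \ref{lm:g}(v) gives $\lambda\,\mathds1 \le (a^{ij}(t,x)) \le (1/\lambda^*)\mathds1$ uniformly on $I \times U$. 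Writing $\Nabla u_t = g^{*(u_t)}Du_t$ and $dm=e^{-V}dx$ with $V$ bounded on $U$ (cf.\ (\ref{measure})), the defining identity (\ref{local-heat}) becomes
\[ \int_I\!\int_U u_t\,\partial_t v_t\, e^{-V}\,dx\,dt = \int_I\!\int_U \sum_{i,j} a^{ij}(t,x)\, \frac{\partial u_t}{\partial x^i}\,\frac{\partial v_t}{\partial x^j}\, e^{-V}\,dx\,dt \]
for all $v\in H^1_c(I\times U)$, exhibiting $u$ as a local weak solution of a linear, uniformly parabolic equation in divergence form with bounded measurable coefficients.

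Once this reformulation is in place, the classical De Giorgi--Nash--Moser theory delivers all three conclusions at once. Nash's Hölder estimate yields local Hölder continuity of $u$, proving the first assertion. Moser's iteration together with the weak Harnack inequality yields the parabolic Harnack inequality for nonnegative continuous local solutions, and the strong maximum principle is the standard consequence of the Harnack inequality (positivity propagates forward in time, and a vanishing interior maximum is ruled out by Harnack applied on small cylinders). Since all these results are local, a covering of $I\times\Omega$ by relatively compact coordinate cylinders on which $\lambda,\lambda^*$ and $\|V\|_\infty$ are uniformly controlled finishes the argument.

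The only point that genuinely requires care --- and which is the main obstacle to the reduction --- is justifying the linear reformulation: the coefficients $a^{ij}$ depend on the unknown $Du$. This is harmless here, because the definition of a local solution already provides $u\in L^2_{\mathrm{loc}}(I,H^1_{\mathrm{loc}}(\Omega))$, so $Du$ --- and hence $a^{ij}$ --- is a bounded measurable function; crucially, the uniform ellipticity constants in (\ref{eq:F-str2}) are \emph{independent} of $u$, so no prior regularity on $u$ is required before entering the linear theory. Once inside it, the nonlinearity of the Finsler Laplacian plays no further role and the classical parabolic regularity theory applies verbatim.
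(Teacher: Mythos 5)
Your proposal is correct and follows essentially the same route as the paper: via Lemma \ref{lm:Lap} the equation is read as a linear divergence-form parabolic equation with the measurable, uniformly elliptic coefficients $g^{*(u)}$ (ellipticity constants independent of $u$ by (\ref{eq:F-str2}) and Lemma \ref{lm:g}(v)), after which the classical De Giorgi--Nash--Moser/Saloff-Coste theory gives H\"older continuity, the parabolic Harnack inequality and the strong maximum principle.
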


\begin{proof}
Since for given $u$ the Finsler Laplacian $\bm{\Delta}u$ coincides 
with the weighted Laplacian $\Delta^{(u)}$ in the Riemannian metric 
derived from $Z=\nabla u$ (Lemma \ref{lm:Lap}) and since for varying 
(and time-dependent) $u$ all these possible operators $\Delta^{(u)}$ 
are `locally uniformly elliptic', the claim is an immediate 
consequence of Saloff-Coste's result \cite{SC} for locally uniformly 
elliptic operators on weighted Riemannian manifolds. 
\end{proof}

\begin{prop}
The distributional time derivative $w=\partial_t u$ of any 
continuous local solution to the heat equation $\partial_t 
u=\bm{\Delta}u$ on $I\times\Omega$ lies in $H^1_{\mathrm{loc}}(M)$ 
and admits a H\"older continuous version $($which satisfies the 
parabolic Harnack inequality and the strong maximum principle$)$. It 
is a weak solution to the linear parabolic PDE 
\[ \partial_t w=\text{\emph{div}}(g^{*(u)} Dw) \]
with the locally uniformly elliptic, time dependent matrix $g^{*(u)}={g^{(u)}}^{-1}$ defined in $(\ref{g-u})$.
\end{prop}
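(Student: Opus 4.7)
The plan is to differentiate the heat equation in time, exploiting the fact that by Lemma~\ref{lm:Lap} it can be recast in weak form as
\[ \int_I\int_\Omega u_t \,\partial_t v \,dm\,dt = \int_I\int_\Omega Dv \cdot g^{*(u_t)} Du_t \,dm\,dt, \]
with a measurable, locally uniformly elliptic coefficient matrix $g^{*(u)}(x) := g^*(x, Du(x))$ (bounds $\lambda^*$ and $1/\lambda^*$ by Lemma~\ref{lm:g}(v)). Since the differential of the Legendre transform $J^*$ at $\alpha = Du$ is $g^{*(u)}$ itself (Lemma~\ref{lm:g}(ii)--(iii)), formally differentiating in $t$ should produce exactly $\partial_t w = \mathrm{div}(g^{*(u)} Dw)$.

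To make this rigorous, I would study the time difference quotient $w^h_t := (u_{t+h} - u_t)/h$ for small $h > 0$. Subtracting the weak heat equation at times $t+h$ and $t$, dividing by $h$, and using the fundamental theorem of calculus to write
\[ \frac{\Nabla u_{t+h} - \Nabla u_t}{h} = \tilde G^h Dw^h, \qquad \tilde G^h(t,x) := \int_0^1 g^*\bigl(x,\,(1-s)Du_t(x) + s\,Du_{t+h}(x)\bigr)\,ds, \]
shows that $w^h$ is a weak solution of the linear divergence-form parabolic PDE $\partial_t w^h = \mathrm{div}(\tilde G^h Dw^h)$ on a slightly smaller cylinder; the averaged matrix $\tilde G^h$ is locally uniformly elliptic with the same constants, independently of $h$. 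A Caccioppoli energy estimate then yields $h$-uniform $H^1_{\mathrm{loc}}$-bounds on $w^h$, while Saloff-Coste's parabolic De Giorgi--Nash--Moser theory~\cite{SC} for measurable, uniformly elliptic divergence-form operators on weighted Riemannian manifolds provides uniform local Hölder estimates, the parabolic Harnack inequality and the strong maximum principle for $w^h$.

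By Arzelà--Ascoli combined with $L^2$-weak compactness, a subsequence of $w^h$ converges locally uniformly and weakly in $H^1_{\mathrm{loc}}$. The limit must coincide with the distributional derivative $w = \partial_t u$, which therefore lies in $H^1_{\mathrm{loc}}$ and admits a Hölder continuous representative; passing to the limit in the weak form of the PDE for $w^h$ yields the linear limit equation $\partial_t w = \mathrm{div}(g^{*(u)} Dw)$, and a final application of~\cite{SC} to this limit equation gives the remaining Harnack inequality and strong maximum principle for $w$. The most delicate point I anticipate is the weak--strong convergence $\tilde G^h Dw^h \rightharpoonup g^{*(u)} Dw$ in $L^2_{\mathrm{loc}}$: the weak convergence of $Dw^h$ is automatic from the Caccioppoli bound, but one also needs bounded a.e.\ convergence $\tilde G^h \to g^{*(u)}$, which rests on the Lebesgue $L^2_{\mathrm{loc}}$-regularity of $t \mapsto Du_t$ (intrinsic to being a local solution) together with the continuity of $g^*$ on $T^*M \setminus \{0\}$; the set $\{Du = 0\}$ is handled by the same convention used to define $g^{(u)}$ in~(\ref{g-u}).
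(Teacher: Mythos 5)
Your proposal is correct in substance and reorganizes the paper's argument in a genuinely different way. The paper proceeds in two separate steps: it first proves $\partial_t u\in H^1_{\mathrm{loc}}$ in Appendix \ref{ap:2} by estimating the time difference quotients $v^{(\delta)}=(u_{t+\delta}-u_t)/\delta$ directly with the monotonicity inequality $(Du_{t+\delta}-Du_t)(\Nabla u_{t+\delta}-\Nabla u_t)\ge\kappa F^{*2}(Du_{t+\delta}-Du_t)$ from Lemma \ref{lm:g}, and only afterwards identifies the equation by testing $(\ref{local-heat})$ with $v=\partial_t\Phi$ and applying the chain rule $\partial_t[J^*(Du)]=g^*(Du)\cdot D(\partial_t u)$; the regularity theory of \cite{SC} is then invoked once, on the limit equation. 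You instead observe that $w^h$ satisfies an \emph{exact} linear divergence-form equation with the averaged coefficient $\tilde G^h$, so that the Caccioppoli bound (which is essentially the paper's monotonicity estimate, since $\xi\cdot\tilde G^h\xi$ is the fundamental-theorem-of-calculus representation of the same pairing) and the uniform H\"older estimates of \cite{SC} become available \emph{before} the limit, uniformly in $h$. This buys a cleaner identification of the limit equation --- no chain rule for $J^*$ is needed, which matters because $J^*(x,\cdot)$ is not differentiable at the origin --- at the price of the weak-times-strong convergence $\tilde G^h Dw^h\rightharpoonup g^{*(u)}Dw$, which you correctly isolate as the delicate step.

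Two points still need to be supplied. First, the Caccioppoli and Moser machinery for $w^h$ presupposes an $h$-uniform $L^2_{\mathrm{loc}}$ bound on $w^h$ over a slightly larger cylinder, i.e.\ essentially $\partial_t u\in L^2_{\mathrm{loc}}$; this is not part of the definition of a local solution and is not produced by your scheme. The paper obtains it from the gradient-flow construction (Theorem \ref{th:glob}) and imports it into the appendix; a localized substitute should be stated explicitly as the starting point of your argument. Second, on the set $\{Du_t=0\}$ the matrices $\tilde G^h$ need not converge to the conventionally defined $g^{*(u)}$ of $(\ref{g-u})$, since $g^*(x,\cdot)$ has no limit at the origin; appealing to ``the same convention'' does not by itself resolve this --- one must either show $Dw=0$ a.e.\ on $\{Du=0\}$ or accept that the equation is identified only off that set. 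To be fair, the paper's chain-rule step suffers from exactly the same defect, so this is a shared rather than a new gap.
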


\begin{proof}
We postpone the technical proof for the fact $\partial_t u\in H^1_{\mathrm{loc}}(M)$
to Appendix \ref{ap:2} and take this fact now for granted.
Let $\Phi$ be a smooth, compactly supported test function on
$I\times\Omega$. Applying (\ref{local-heat}) to $v=\partial_t\Phi$ yields
\begin{align*}
\int\int (\partial_t\Phi) w\,dm\,dt &= \int\int v \partial_t u\,dm\,dt \\
&=-\int\int Dv \cdot J^*(Du)\,dm\,dt= \int\int D\Phi\cdot \partial_t[J^*(Du)] \,dm\,dt \\
&=\int\int D\Phi \cdot g^*(Du) \cdot D(\partial_tu) \,dm\,dt
 =\int\int D\Phi \cdot g^{*(u)} \cdot Dw\,dm\,dt.
\end{align*}
Hence, $w$ is a weak solution to the linear PDE. Regularity theory
for solutions to linear second order PDEs now implies that $w$ has a
H\"older continuous version satisfying Harnack's inequality and
strong maximum principle.
\end{proof}

\medskip

In order to obtain higher order regularity results, we have to impose certain minimal smoothness assumptions
on the data $F$ and $m$.
We will assume that the maps $J^*(x,\alpha)$ and the logarithmic derivative
$-V(x)=\log[m(dx)/dx]$ of the measure $m$ are Lipschitz continuous in $x$.
More precisely,
\begin{description}
\item[we assume from now on]
that for each point $\overline{x}\in M$ there exists a local
coordinate system $(x^i)_{i=1}^n$ on a suitable neighborhood $U$ of
$\overline{x}$ and a number $\Lambda$ such that
\begin{equation}\label{smooth Finsler}
|\gamma_{ki}^*(x,\alpha)| \le \Lambda F^*(x,\alpha), \qquad |\eta_k(x)| \le \Lambda
\end{equation}
for almost all $x\in U$ and  all $\alpha \in T^*_x M$.
Here and henceforth
\[ \gamma_{ki}^*(x,\alpha) := \frac{\partial}{\partial x^k}\frac{\partial}{\partial\alpha^i}
\bigg( \frac{1}{2} F^{*2}(x,\alpha) \bigg)=\frac{\partial}{\partial x^k}J_i^*(x,\alpha) \]
and $\eta_k(x):=\partial V/\partial x^k(x)$ where $m(dx)=e^{-V(x)}dx_1 \cdots dx_n$.
\end{description}

The first important consequence of these assumptions is

\begin{thm}[$H^2$-Regularity]\label{thm:H2-reg}
Assume that the transfer maps $J^*$ as well as the logarithmic 
density of the measure $m$ are differentiable in $x$ as specified in 
$(\ref{smooth Finsler})$. Then every continuous local solution to 
the heat equation $\partial_t u=\bm{\Delta}u$ on $I\times\Omega$ is 
$H^2_{\mathrm{loc}}$ in $x$. 
\end{thm}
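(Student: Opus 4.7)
The plan is to apply Nirenberg's difference quotient method in the spatial directions to the weak formulation \eqref{local-heat}, exploiting two earlier facts: (a) by the preceding proposition $w:=\partial_t u\in H^1_{\mathrm{loc}}$ so that $\|\Delta_h^k w\|_{L^2_{\mathrm{loc}}}$ is bounded uniformly in $h$; and (b) the Lipschitz assumptions \eqref{smooth Finsler} on $J^*$ and on $V$ convert the nonlinearity of $\nabla u=J^*(x,Du)$ into averaged coefficients that are uniformly elliptic with controllable error terms.

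Working in a coordinate chart $U$ where \eqref{smooth Finsler} holds and $m=e^{-V}dx$, fix smooth cutoffs $\phi\in\mathcal{C}_c^\infty(U)$ and $\eta\in\mathcal{C}_c^\infty(I)$. For each coordinate direction $e_k$ and $|h|$ less than $\dist(\supp\phi,\partial U)$, take as test function
\[ v(t,x)=\Delta_{-h}^k\bigl(\phi(x)^2\,\eta(t)\,\Delta_h^k u(t,x)\bigr),\qquad \Delta_h^k f(x):=\frac{f(x+he_k)-f(x)}{h}, \]
which lies in $H^1_c(I\times U)$ since $u,Du\in L^2_{\mathrm{loc}}$. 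Using the discrete integration-by-parts identity and the Leibniz rule $\Delta_h^k(fg)(x)=f(x+he_k)\Delta_h^k g(x)+\Delta_h^k f(x)\,g(x)$, the spatial half of \eqref{local-heat} rewrites (up to signs) as
\[ \iint \partial_i(\phi^2\eta\,\Delta_h^k u)\cdot\Delta_h^k\!\bigl(J^{*i}(x,Du)\,e^{-V}\bigr)\,dx\,dt. \]

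The crucial decomposition, obtained by the fundamental theorem of calculus applied separately in the $x$- and $\alpha$-slots of $J^{*i}(x,\alpha)$, reads
\[ \Delta_h^k J^{*i}(x,Du(x))=\bar g^{*ij}_h(x)\,\Delta_h^k\partial_j u(x)+\bar\gamma^{*i}_{k,h}(x), \]
where $\bar g^{*ij}_h$ and $\bar\gamma^{*i}_{k,h}$ are averages of $g^{*ij}$ and $\gamma^{*i}_k$ along the relevant segments. The expansion $\partial_i(\phi^2\eta\,\Delta_h^k u)=2\phi(\partial_i\phi)\eta\,\Delta_h^k u+\phi^2\eta\,\Delta_h^k\partial_i u$ then isolates the coercive term
\[ \iint \phi^2\eta\,\bar g^{*ij}_h\,(\Delta_h^k\partial_i u)(\Delta_h^k\partial_j u)\,e^{-V}\,dx\,dt\ \ge\ c\iint \phi^2\eta\,|\Delta_h^k Du|^2\,dm\,dt, \]
where the uniform ellipticity of $g^*$ (Lemma \ref{lm:g}(v)) is inherited by the averaged matrix $\bar g^{*ij}_h$ with a constant $c>0$ independent of $h$.

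All remaining terms split into three types. Cross-terms carrying $D\phi$ or $\eta'$ are absorbed via Young's inequality into $\varepsilon$ times the coercive term plus $\|Du\|_{L^2_{\mathrm{loc}}}^2$ using $\|\Delta_h^k u\|_{L^2}\le\|Du\|_{L^2}$. Terms containing $\bar\gamma^{*i}_{k,h}$ or $\Delta_h^k e^{-V}$ are pointwise dominated, by assumption \eqref{smooth Finsler}, by $\Lambda F^*(x,Du)$ and $\Lambda\,e^{-V}$ respectively, yielding integrals controlled by $\mathcal E(u)+\|u\|_{L^2}^2$. Finally, applying the discrete integration by parts once more to the time-derivative side $\iint u\,\partial_t v\,dm\,dt$, the resulting integrand contains $\Delta_h^k(w\,e^{-V})$; by the preceding proposition $w\in H^1_{\mathrm{loc}}$, so $\|\Delta_h^k w\|_{L^2(\supp\phi)}\le\|\partial_k w\|_{L^2}$ uniformly in $h$. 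Combining the estimates yields $\iint\phi^2\eta|\Delta_h^k Du|^2\,dx\,dt\le C$ uniformly in $h$, and the standard difference-quotient characterization of $H^1$ upgrades this to $\partial_k Du\in L^2_{\mathrm{loc}}(I\times\Omega)$ for every $k$, i.e.\ $u\in H^2_{\mathrm{loc}}$ in $x$. The main obstacle is the $\bar\gamma^{*}$-term, which a priori is merely a vector field with no built-in square integrability; it is precisely the degree-$1$ $\alpha$-homogeneity of $\gamma^{*i}_k$ encoded in \eqref{smooth Finsler} that upgrades it into a quantity of the order of $Du\in L^2_{\mathrm{loc}}$, making the entire argument close.
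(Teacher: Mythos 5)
Your argument is correct and is essentially the paper's own proof (given in the Appendix): spatial difference quotients tested against a cut-off multiple of the difference quotient of $u$, coercivity extracted from the uniform ellipticity/monotonicity of $J^*(x,\cdot)$ applied to the increment in the $\alpha$-slot, and the increment in the $x$-slot of $J^*$ together with the increments of $e^{-V}$ absorbed via the Lipschitz bounds $(\ref{smooth Finsler})$, exactly as in your decomposition into $\bar g^{*}_h$ and $\bar\gamma^{*}_{k,h}$. The one place you diverge is the time term: you integrate by parts in $t$ and invoke $\partial_t u\in H^1_{\mathrm{loc}}$ from the preceding proposition (legitimate, since that fact is established independently via time difference quotients in the Appendix), whereas the paper keeps this term as $\tfrac12\partial_t\int\psi^2|D_k^\delta u|^2\,dm$ and integrates over $[0,T]$, so that only $\partial_t u\in L^2_{\mathrm{loc}}$ is needed.
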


We postpone the technical proof to Appendix \ref{ap:3} and continue
with the proof of H\"older continuity of the derivatives of $u$.

\begin{lemma} For each local solution $u$ to the heat equation and each $k=1,\ldots,n$, the
partial derivative $w(t,x)=D_ku(t,x)= \partial u/\partial x^k(t,x)$ is a weak solution to the equation
\begin{equation}\label{space-der}
\partial_tw=\text{\emph{div}}(g^{*(u)} \cdot Dw) + \text{\emph{div}}\,H +h
\end{equation}
with a vector field $H \in L^2_{\mathrm{loc}}(I\times\Omega)$
and a function $h \in L^\infty_{\mathrm{loc}}(I\times\Omega)$ given by
\[ H_i(t,x)=\gamma_{ki}^* \big( x,Du(t,x) \big) -\eta_k(x) J_i^*\big( x,Du(t,x) \big) \]
and
\[ h(t,x)=\eta_k(x) \partial_t u(t,x). \]
\end{lemma}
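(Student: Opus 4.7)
The plan is to obtain the equation for $w=D_ku$ by differentiating the heat equation $\partial_tu=\mathrm{div}\,J^*(x,Du)$ formally in $x^k$ and then interpreting the resulting expression in the correct weak sense. Concretely, for an arbitrary test function $\phi\in\mathcal{C}^\infty_c(I\times\Omega)$, I will plug $v=-\partial\phi/\partial x^k$ into the weak heat equation $(\ref{local-heat})$ and integrate by parts with respect to $x^k$ on both sides. The needed regularity to do this rigorously is exactly what Theorem \ref{thm:H2-reg} (giving $u\in H^2_{\mathrm{loc}}$ in $x$, hence $w\in H^1_{\mathrm{loc}}$) and the preceding proposition (giving $\partial_t u\in H^1_{\mathrm{loc}}$) provide.

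For the left-hand side, integration by parts in $x^k$ against the measure $dm=e^{-V}dx$ uses the identity $\int \psi D_k f\,dm=-\int D_k\psi\cdot f\,dm+\int\psi f\eta_k\,dm$, giving
\[
-\int\!\!\int u\,\partial_t(D_k\phi)\,dm\,dt
=\int\!\!\int w\,\partial_t\phi\,dm\,dt-\int\!\!\int u\eta_k\partial_t\phi\,dm\,dt.
\]
Since $\eta_k$ is time-independent and $\partial_tu$ is locally $H^1$ in $x$ (hence in $L^2$), one more integration by parts in $t$ turns the last term into $\int\!\!\int \eta_k(\partial_tu)\phi\,dm\,dt=\int\!\!\int h\phi\,dm\,dt$, which supplies the lower-order source $h$ in the claimed equation.

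For the right-hand side, I expand $D(D_k\phi)(\Nabla u)=\sum_i(\partial^2\phi/\partial x^i\partial x^k)\,J_i^*(x,Du)$ and integrate by parts in $x^k$, picking up $D_k[J_i^*(x,Du)]$ and a term proportional to $\eta_k$. The key computation is the chain rule: on the set where $Du\neq0$,
\[
D_k\big[J_i^*(x,Du)\big]=\gamma^*_{ki}(x,Du)+\sum_j g^*_{ij}(x,Du)\frac{\partial w}{\partial x^j}=\gamma^*_{ki}(x,Du)+(g^{*(u)}Dw)_i,
\]
using Lemma \ref{lm:g}(ii)--(iii). Collecting the resulting terms yields precisely $\int\!\!\int D\phi\cdot g^{*(u)}Dw\,dm\,dt+\int\!\!\int D\phi\cdot H\,dm\,dt$ with $H_i=\gamma^*_{ki}(x,Du)-\eta_kJ_i^*(x,Du)$. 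Equating the two sides gives the weak form of $(\ref{space-der})$. That $H\in L^2_{\mathrm{loc}}$ follows from the assumption $|\gamma^*_{ki}(x,\alpha)|\le\Lambda F^*(x,\alpha)$ and $|\eta_k|\le\Lambda$ in $(\ref{smooth Finsler})$ together with $F^*(Du)\in L^2_{\mathrm{loc}}$; that $h\in L^\infty_{\mathrm{loc}}$ follows from the H\"older continuity of $\partial_tu$ established in the preceding proposition.

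The main obstacle is the chain-rule step at points where $Du=0$, since $J^*(x,\cdot)$ is not $\mathcal{C}^1$ at the origin. This is handled without pain because the bound $|\gamma^*_{ki}(x,\alpha)|\le\Lambda F^*(x,\alpha)$ forces $\gamma^*_{ki}$ to vanish continuously as $\alpha\to0$, and $g^{*(u)}$ is uniformly elliptic in the Riemannian sense of Lemma \ref{lm:Lap}; alternatively one can justify everything by regularizing as in Section 1.3 and passing to the limit. The only other delicate point is justifying the $x^k$-integration by parts for $\partial_tu$, which needs $\partial_tu\in H^1_{\mathrm{loc}}$ in $x$ --- but this is precisely what was established in the proposition immediately preceding the lemma.
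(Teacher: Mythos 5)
Your proposal is correct and follows essentially the same route as the paper's proof: test the weak heat equation with $\pm D_k\phi$, integrate by parts in $x^k$ against $dm=e^{-V}dx$ (producing the $\eta_k$ terms), integrate by parts in $t$ to generate $h$, and apply the chain rule $D_k[J_i^*(x,Du)]=\gamma_{ki}^*(x,Du)+(g^{*(u)}Dw)_i$. Your additional remarks on the regularity needed to justify these manipulations (Theorem \ref{thm:H2-reg}, $\partial_t u\in H^1_{\mathrm{loc}}$, and the behaviour at $Du=0$) are accurate and, if anything, more explicit than the paper's own argument.
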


\begin{proof}
Let a smooth, compactly supported test function $\Phi$ on $I\times\Omega$ be given.
Without restriction, we may assume that there exists a global coordinate system $(x^i)_{i=1}^n$
on $\Omega$ (or at least on the support of $\Phi$).
In these coordinates, let $m$ be given as $m(dx)=e^{-V(x)}dx^1 \cdots dx^n$.

Applying (\ref{local-heat}) to $v=D_k\Phi$ yields
\begin{align*}
&\int\int\partial_t(D_k\Phi) u \,dm\,dt
 = \int\int D(D_k\Phi) \cdot J^*(Du)\,dm\,dt \\
&= \int\int D\Phi \cdot \big[ -D_k\big( J^*(Du) \big) +(D_kV) J^*(Du) \big] \,dm\,dt \\
&= \int\int D\Phi \cdot[ -g^*(Du) \cdot D(D_ku) -(\gamma_{k\cdot}^*)(Du) +(D_kV) J^*(Du)] \,dm\,dt \\
&= -\int\int D\Phi \cdot[g^*(Du) \cdot Dw +H] \,dm\,dt.
\end{align*}
On the other hand,
\begin{align*}
\int\int \partial_t(D_k\Phi) u \,dm\,dt
&= \int\int [-(\partial_t \Phi) (D_k u) +(D_kV)(\partial_t\Phi) u] \,dm\,dt \\
&= -\int\int [(\partial_t \Phi)w+\Phi h] \,dm\,dt.
\end{align*}
That is,
\[ \int\int D\Phi \cdot[g^*(Du) \cdot Dw +H] \,dm\,dt
 =\int\int [(\partial_t\Phi)w +\Phi h] \,dm\,dt \]
for all smooth compactly supported $\Phi$ on $I\times \Omega$ and
thus
\[ \partial_tw=\text{\rm{div}}(g^{*(u)} \cdot Dw +H) +h \]
locally in distributional sense on $I\times\Omega$.
\end{proof}

\begin{lemma}\label{lm:wHh}
\begin{itemize}
\item[\rm{(i)}]
If $w\in L^p_{\mathrm{loc}}(I\times\Omega)$ is a weak solution to the equation $(\ref{space-der})$ with a vector field
$H\in L^p_{\mathrm{loc}}(I\times\Omega)$ for some $p\in [1,\infty]$ and a function
$h\in L^\infty_{\mathrm{loc}}(I\times\Omega)$, then $w \in L^q_{\mathrm{loc}}(I\times\Omega)$ for $q=pn/(n-2)$.

\item[\rm{(ii)}]
If $w$ is a weak solution to the equation $(\ref{space-der})$ with a vector field
$H\in L^p_{\mathrm{loc}}(I\times\Omega)$ for some $p>n+2$
and a function $h\in L^\infty_{\mathrm{loc}}(I\times\Omega)$, then $w$ is H\"older continuous.
\end{itemize}
\end{lemma}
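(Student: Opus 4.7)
The plan is to treat the equation (\ref{space-der}) as a linear second order parabolic equation in divergence form with measurable coefficients $A(t,x)=g^{*(u)}(t,x)$ that are locally uniformly elliptic by Lemma \ref{lm:g}(v) (applied to $Du$, which is locally bounded since $u$ is Hölder and $H^2_{\mathrm{loc}}$), together with divergence-form inhomogeneity $H$ and bounded zero-order inhomogeneity $h$. Once this is recognized, both assertions reduce to classical parabolic regularity theory; the Finsler structure enters only through the boundedness and ellipticity of $g^{*(u)}$.

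For (i), the strategy is a Caccioppoli--Moser energy estimate followed by Sobolev embedding. I would fix a parabolic subcylinder $Q'\Subset Q\Subset I\times\Omega$, a cut-off $\phi\in\mathcal{C}_c^\infty(Q)$ with $\phi\equiv 1$ on $Q'$, and use the truncated test function $\phi^2 |w|_N^{p-2} w$ (truncation at level $N$ to ensure admissibility, later letting $N\to\infty$) in the weak formulation of (\ref{space-der}). Expanding the divergence terms yields, after integration by parts and use of the ellipticity $\xi^T g^{*(u)}\xi\ge \kappa^* |\xi|^2$,
\[
(p-1)\int \phi^2 |w|^{p-2} |Dw|^2\,dm\,dt + \text{(time boundary term)} \le C\int \bigl(|D\phi|^2+\phi^2\bigr)\bigl(|w|^p+|H|^p+1\bigr)\,dm\,dt,
\]
where Young's inequality absorbs the cross terms involving $H\cdot\phi^2|w|^{p-2}Dw$ and $h\cdot\phi^2|w|^{p-1}$. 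Rewriting the gradient in the form $\nabla(\phi |w|^{p/2})\in L^2(Q)$, the spatial Sobolev inequality $H^1\hookrightarrow L^{2n/(n-2)}$ raises the integrability of $w$ from $L^p$ to $L^{pn/(n-2)}$ on $Q'$.

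For (ii), I would iterate (i) finitely many times starting from $w\in L^2_{\mathrm{loc}}$ (which is automatic from $u\in H^2_{\mathrm{loc}}$ by Theorem \ref{thm:H2-reg}) until $w\in L^r_{\mathrm{loc}}$ for some $r$ larger than the exponent required by the parabolic De Giorgi--Nash--Moser theorem with divergence-form right-hand side: namely the condition on $H$ is $H\in L^p$ with $p>n+2$, which is precisely the borderline Aronson--Serrin / Ladyzhenskaya--Solonnikov--Uraltseva condition guaranteeing Hölder continuity of weak solutions (see Ladyzhenskaya--Solonnikov--Uraltseva, Chapter III, or Lieberman, Chapter VI). Since $h\in L^\infty_{\mathrm{loc}}\subset L^p_{\mathrm{loc}}$, the lower-order zero term is harmless, and the conclusion follows.

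The main obstacle is the rigorous handling of the test function. Since $w=D_k u$ is only known to lie in $L^2_{\mathrm{loc}}$ a priori and $|w|^{p-2}w$ is not in general admissible as a test function in the weak formulation, one has to justify the computation by a truncation--approximation procedure (replace $w$ by $\max\{-N,\min\{N,w\}\}$, do the energy estimate, and pass to the limit $N\to\infty$ using monotone/dominated convergence once uniform bounds are in hand), and to deal carefully with the time derivative by a Steklov averaging of $w$. These are standard but lengthy technicalities and constitute the only real work; the dependence on the Finsler data appears only through the constants $\kappa^*_\Omega, \Lambda$ from (\ref{smooth Finsler}).
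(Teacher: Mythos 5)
Your proposal is correct and follows essentially the same route as the paper: part (i) is the same Moser-type energy estimate (test with $w^{p-1}$, use the uniform ellipticity of $g^{*(u)}$ and Young's inequality to absorb the $H$- and $h$-terms, conclude $Dw^{p/2}\in L^2$, then apply the Sobolev embedding), and part (ii) is, as in the paper, an appeal to the standard Aronson--Serrin/Saloff-Coste theory for divergence-form parabolic equations with $H\in L^p$, $p>n+2$. The only differences are cosmetic: you spell out the cut-off and truncation arguments that the paper explicitly declines to discuss, and you fold into (ii) an iteration of (i) that the paper instead performs later in the proof of Theorem \ref{thm:holder-reg} (it is not needed for (ii) as stated, since $H\in L^p$ with $p>n+2$ is already a hypothesis there).
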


\begin{proof}
(i) This result should be well known (perhaps even in a sharper version).
Since we could not find a reference, we include a sketch of the proof.
We do not discuss smoothing and cut-off arguments.
For simplicity, we assume that $I\times\Omega=(0,T)\times M$ and that $M$ is compact.

Let $w\in L^p_{\mathrm{loc}}(I\times\Omega)$ be a weak solution to the equation $(\ref{space-der})$
with a vector field $H\in L^p_{\mathrm{loc}}(I\times\Omega)$ and a function $h\in L^\infty_{\mathrm{loc}}(I\times\Omega)$.
Choose $w^{p-1}$ as a test function.
Then the weak formulation of $(\ref{space-der})$ implies
\begin{align*}
\frac1p\|w_T\|_{L^p}^p-\frac1p\|w_0\|_{L^p}^p
&=\int_0^T \frac1p\partial_t\|w_t\|_{L^p}^p\,dt=\int_0^T\int_M w^{p-1}\partial_tw\,dm\,dt \\
&=\int\int [-D(w^{p-1}) \cdot g^{*(u)} \cdot Dw - D(w^{p-1}) \cdot H +w^{p-1}h] \,dm\,dt \\
&\le -\frac{p-1}2 \int\int w^{p-2}Dw \cdot g^{*(u)} \cdot Dw \,dm\,dt \\
&\qquad +\frac{p-1}2 \int\int w^{p-2}H \cdot g^{(u)} \cdot H \,dm\,dt +\int\int w^{p-1}h \,dm\,dt \\
&\le -\frac{2(p-1)}{p^2} \|F^{*(u)}(D w^{p/2})\|_{L^2}^2 \\
&\qquad +\frac{p-1}2\|w\|_{L^p}^{p-2} \|F^{(u)}(H)\|_{L^p}^2 +\|w\|_{L^p}^{p-1} \|h\|_{L^\infty} \\
&\le C <\infty
\end{align*}
according to our assumptions on $u$, $H$ and $h$.
From this estimate, we first of all deduce that $\|w_t\|_{L^p}$ is bounded in $t$ on $I$.
Having this at hand, we secondly deduce that
\[ \|F^{*(u)}(D w^{p/2})\|_{L^2}<\infty. \]
Classical Sobolev inequality now implies $w^{p/2}\in L^{2^*}$ with $2^*=2n/(n-2)$.
That is, $w\in L^q$ with $q=pn/(n-2)$.

(ii) This is a standard estimate.
In the required version it can be found in \cite{SC}.
However, similar versions certainly had been known much earlier, e.g., in the works of Moser, Aronson and Serrin.
\end{proof}

\begin{thm}[$\mathcal{C}^{1,\alpha}$-Regularity] \label{thm:holder-reg}
Assume that the transfer maps $J^*$ as well as the logarithmic 
density of the measure $m$ are differentiable in $x$ as specified in 
$(\ref{smooth Finsler})$. Then every continuous local solution to 
the heat equation $\partial_t u=\bm{\Delta}u$ on $I\times\Omega$ is 
$\mathcal{C}^{1,\alpha}$ in $t$ and $x$. 
\end{thm}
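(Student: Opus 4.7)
The plan is to run an iterative integrability bootstrap on the linear second-order PDE (\ref{space-der}) satisfied by each spatial partial derivative $w = D_k u$, viewing the coefficient matrix $g^{*(u)}$ as given, locally uniformly elliptic and measurable data determined by the already-controlled $u$, and successively improving the integrability of the inhomogeneity $(H,h)$ until H\"older continuity of $D_k u$ is reached.

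First I collect the starting regularity. By Theorem \ref{thm:H2-reg}, $u \in H^2_{\mathrm{loc}}$ in $x$, so each $w_k := D_k u$ lies in $L^2_{\mathrm{loc}}(I\times\Omega)$ and $F^*(Du) \in L^2_{\mathrm{loc}}$. The preceding proposition already provides a H\"older continuous version of $\partial_t u$, so $h = \eta_k \, \partial_t u$ lies in $L^\infty_{\mathrm{loc}}$. Hypothesis (\ref{smooth Finsler}), together with the Legendre identity $F(x, J^*(x,\alpha)) = F^*(x,\alpha)$, yields the pointwise bound $|H| \le C\, F^*(Du)$ in local coordinates, so that $H \in L^p_{\mathrm{loc}}$ whenever $Du \in L^p_{\mathrm{loc}}$.

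Next, starting from $p_0 = 2$, I iterate Lemma \ref{lm:wHh}(i): if $Du \in L^{p_j}_{\mathrm{loc}}$ then $H \in L^{p_j}_{\mathrm{loc}}$, and the lemma promotes each $w_k$ to $L^{p_{j+1}}_{\mathrm{loc}}$ with $p_{j+1} = p_j n/(n-2)$, which in turn upgrades $Du$ to $L^{p_{j+1}}_{\mathrm{loc}}$. Since the sequence grows geometrically, after finitely many iterations $p_j > n+2$, at which point Lemma \ref{lm:wHh}(ii) delivers H\"older continuity of $w_k = D_k u$ in $(t,x)$ for every $k$. Combined with the H\"older continuity of $\partial_t u$, this yields $u \in \mathcal{C}^{1,\alpha}$ jointly in $t$ and $x$.

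The main obstacle I expect is to confirm that the bootstrap is genuinely self-sustaining despite the nonlinear dependence $g^{*(u)}(t,x) = g^*(x, Du(t,x))$: improving $Du$ seems to require regularity of $g^{*(u)}$, which itself depends on $Du$. The point is that Lemma \ref{lm:wHh} only uses measurability and local uniform ellipticity of the coefficient matrix, and both hold for $g^{*(u)}$ via (\ref{eq:F-str4}) and (\ref{smooth Finsler}) irrespective of any further regularity of $u$. Hence at each iteration one freezes $u$, reads $g^{*(u)}$ as given data, and applies the linear parabolic theory. The remaining technicalities --- cutoff and smoothing arguments needed to use $w^{p-1}$ as a test function, and a separate (easier) treatment when $n \le 2$ --- follow the standard Moser--De Giorgi recipes referenced in the proof of Lemma \ref{lm:wHh}.
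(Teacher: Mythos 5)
Your proposal is correct and follows essentially the same route as the paper: starting from the $H^2_{\mathrm{loc}}$ regularity, you iterate Lemma \ref{lm:wHh}(i) on $w=D_ku$, using $(\ref{smooth Finsler})$ to convert each integrability gain for $Du$ into the same gain for $H$, until $q>n+2$ allows Lemma \ref{lm:wHh}(ii) to conclude H\"older continuity; your added remarks on why the bootstrap is self-sustaining (the lemma only needs measurability and uniform ellipticity of $g^{*(u)}$) make explicit what the paper leaves implicit.
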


\begin{proof}
To deduce the H\"older continuity, we apply the first assertion
of Lemma \ref{lm:wHh} to each of the partial derivatives $w=D_ku$ of
the given solution $u$. It implies that $w\in L^p_{\mathrm{loc}}$ for some
$p>2$ and thus in turn $H\in L^p_{\mathrm{loc}}$ (according to our
assumptions $(\ref{smooth Finsler})$ on the coefficients of the Finsler structure).
Finitely many iterations of this argument yield $w\in L^q_{\mathrm{loc}}$ for $q$ sufficiently large
in order to apply the second assertion of Lemma \ref{lm:wHh} which then implies H\"older continuity.
\end{proof}

\begin{remark} \rm
If $F$ is a smooth Finsler structure and if the logarithmic density 
of the measure $m$ is $\mathcal{C}^\infty$, then local solutions $u$ 
of the heat equation $\partial_t u=\bm{\Delta}u$ are 
$\mathcal{C}^\infty$ in $t$ and $x$ outside the set $\{(t,x) \,:\, 
Du(t,x)=0\}$. On this set, however, the solutions typically will not 
be $\mathcal{C}^2$. See Example \ref{heat sol}. 
\end{remark}

\section{Ricci Curvature and Heat Equation}\label{sc:Ricci}

From now on, we always assume that $M$ is  compact and that $F$ is smooth on $TM \setminus \{0\}$.
This in particular implies that the uniform ellipticity condition formulated in Chapter \ref{sc:pre} is equivalent
to the {\it strong convexity} of $F(x,\cdot)$ at every $x \in M$
(in the sense that the matrix $g_{ij}(x,\xi)$ in $(\ref{eq:F-str1})$
is positive-definite for all $\xi \in T_xM \setminus \{0\}$).

\medskip

We review some geometric concepts in a heuristic way, intended  for nonspecialists.
For further reading and more details, we refer to \cite{BCS} and \cite{Shlec}.

A $\mathcal{C}^1$-curve $\gamma:[0,l] \rightarrow M$ is called a {\it geodesic} if it has constant speed
(i.e., $F(\gamma,\dot{\gamma})$ is constant) and if it is locally minimizing,
i.e., given $t \in [0,l]$, there is $\varepsilon>0$ such that
$d(\gamma(s),\gamma(s')) =\int_s^{s'} F(\gamma,\dot{\gamma}) \,d\tau$ holds
for all $s,s' \in [0,l] \cap [t-\varepsilon,t+\varepsilon]$ with $s<s'$.
Such $\gamma$ is in fact $\mathcal{C}^{\infty}$ and, for any $x \in M$ and $y \in M$ sufficiently close to $x$,
there is a unique {\it minimal} geodesic $\gamma:[0,1] \rightarrow M$ with $\gamma(0)=x$ and $\gamma(1)=y$
(i.e., $d(x,y)=\int_0^1 F(\gamma,\dot{\gamma}) \,d\tau$).

Given $x \in M$ and $\xi \in T_xM$, we define the
{\it exponential map} by $\exp_x \xi:=\gamma(1)$ provided there exists a geodesic $\gamma:[0,1] \rightarrow M$
with $\gamma(0)=x$ and $\dot{\gamma}(0)=\xi$.
By the Hopf-Rinow theorem (cf.\ \cite[Theorem 6.6.1]{BCS}), $(M,F)$ is forward complete
if and only if $\exp_x$ is defined on all of $T_xM$ for each (or some) $x \in M$.
In this case, any two points $x,y \in M$ can be connected by a minimal geodesic from $x$ to $y$.

For a unit vector $v \in T_xM$, let $r(v) \in (0,\infty]$ be the supremum of $r>0$ such that
the geodesic $t \mapsto \exp_x tv$ is minimal on $[0,r]$.
If $r(v)<\infty$, then $\exp_x (r(v)v)$ is called a {\it cut point} of $x$,
and the {\it cut locus} $\mathrm{Cut}(x)$ of $x$ is defined as the set of all cut points of $x$.
The exponential map $\exp_x$ is a $\mathcal{C}^{\infty}$-diffeomorphism from $\{ tv \,:\, v \in T_xM,\, F(v)=1,\, t \in (0,r(v)) \}$
to $M \setminus (\mathrm{Cut}(x) \cup \{ x \})$.

\medskip

Fix a unit vector $\xi \in T_xM$ (i.e., $F(x,\xi)=1$) and let $Z$ be an arbitrary $\mathcal{C}^{\infty}$-vector field
on an open neighborhood $U$ of $x$ with $Z(x)=\xi$ and such that every integral curve of $Z$ is a geodesic.
A typical example is $Z=\Nabla [d(\gamma(-\varepsilon),\cdot)]$ for sufficiently small $\varepsilon>0$,
where $\gamma:[-\varepsilon,\varepsilon] \rightarrow M$ is a geodesic with $\dot{\gamma}(0)=\xi$.
Then $Z$ induces the Riemannian structure $g_Z(x):=g(x,Z(x))$ on $U$ through $(\ref{eq:F-str1})$ (see also $(\ref{eq:gxi})$),
and the {\it flag curvature} $\mathcal{K}(\xi,\eta)$ of $\xi$ and a linearly independent unit vector $\eta \in T_xM$
is defined as the sectional curvature of the plane spanned by $\xi$ and $\eta$ with respect to $g_Z$
(see \cite[Proposition 6.2.2]{Shlec}).
Similarly, the {\it Ricci curvature} $\mathrm{Ric}(\xi)$ is the Ricci curvature of $\xi$ with respect to $g_Z$.

Recall our arbitrarily fixed measure $m$ on $M$ and its representation $m(dx)=e^{-V(x)}dx^1 \cdots dx^n$ (see $(\ref{measure})$).
Similarly,  the Riemannian volume element $m_Z$ induced from $g_Z$ has a representation  as
\[ m_Z(dx) =e^{-W_Z(x)} dx^1 \cdots dx^n \]
for some function $W_Z$ on $U$.
Thus we can represent $m(dx)=e^{-V_Z(x)}m_Z(dx)$ with $m_Z$ as a reference measure
and $V_Z=V-W_Z$ as a weight function. We put
\begin{equation}\label{eq:dV}
\partial_\xi V_Z= \frac{d}{dt}\Big|_{t=0} V_Z\big( \gamma(t) \big), \qquad
\partial^2_{\xi} {V}_Z=\frac{d^2}{dt^2}\Big|_{t=0} V_Z\big( \gamma(t) \big),
\end{equation}
where $\gamma:[-\varepsilon,\varepsilon] \rightarrow M$ is the geodesic with $\dot{\gamma}(0)=\xi$.
The important observation now is that for given $\xi$ the quantities
$\mathrm{Ric}(\xi):=\mathrm{Ric}_{g_Z}(Z,Z)$ as well as $\partial_{\xi} {V}_\xi:=\partial_\xi V_Z$
and $\partial^2_{\xi} {V}_\xi:= \partial^2_{\xi} {V}_Z$ do not depend on the choice of the vector field $Z$
(provided it has geodesics as integral curves).

The following lower Ricci curvature bound was introduced in \cite{Oint} inspired by the theory of weighted Riemannian manifolds.

\begin{defn}\label{df:NRic} \rm
Let $(M,F,m)$ be a smooth, $n$-dimensional Finsler manifold endowed with a smooth measure $m$ and let $K \in \R$.
\begin{itemize}
\item[(i)]
We say that $(M,F,m)$ satisfies the bound $n$-$\mathrm{Ric} \ge K$
if $\mathrm{Ric}(\xi) \ge K$ and $\partial_{\xi} {V}_\xi=0$ for any unit vector $\xi \in T_xM$.

\item[(ii)]
We say that $(M,F,m)$ satisfies the bound $N$-$\mathrm{Ric} \ge K$ for some given number $N \in (n,\infty)$ if
\[ \mathrm{Ric}_N(\xi):= \mathrm{Ric}(\xi) +\partial^2_{\xi}{V}_\xi -\frac{(\partial_{\xi} {V}_\xi)^2}{N-n} \ge K \]
for any unit vector $\xi \in T_xM$.

\item[(iii)]
We say that $(M,F,m)$ satisfies the bound $\infty$-$\mathrm{Ric} \ge K$
if $\mathrm{Ric}_{\infty}(\xi):=\mathrm{Ric}(\xi) +\partial^2_{\xi} {V}_\xi \ge K$  for any unit vector $\xi \in T_xM$.
\end{itemize}
\end{defn}

The infinite dimensional case (iii) corresponds to the Bakry-\'Emery tensor (\cite{BE})
and the finite dimensional case (ii) is an analogue of Qian's generalized one (\cite{Qi}, see also \cite{Lo}).
The most restricted case (i) still admits a number of non-Riemannian spaces.
For instance, the Busemann-Hausdorff measure on a Finsler manifold of Berwald type satisfies
$\partial {V} \equiv 0$ (\cite[Propositions 2.6, 2.7]{Shvol}).
However, the existence of a measure satisfying $\partial {V} \equiv 0$ should be a strong constraint
among general Finsler manifolds, and then there is no advantage in dealing with concrete measures.
This is the reason why we consider an arbitrary measure $m$ on $M$.

\medskip

\begin{thm}\label{th:Lcom}
Assume that $N$-$\mathrm{Ric}\ge K$ for some pair  $K, N\in \R$ with $N \ge \dim M$.
Then the Laplacian of the distance function $u(x)=d(z,x)$ from any given point $z\in M$ can be estimated as follows
\begin{equation} \bm{\Delta} u(x)\le \sqrt{-(N-1)K}\cdot \coth\bigg( \sqrt{\frac{- K}{N-1}}d(z,x) \bigg)
\end{equation}
pointwise on $M_z:=M\setminus (\{z\}\cup \mathrm{Cut}(z))$ and in the sense of distributions on $M\setminus\{z\}$.
If $K=0$, then the RHS should be interpreted as $(N-1)/d(z,x);$
if $K>0$, then as $\sqrt{(N-1)K}\cdot \cot (\sqrt{K/(N-1)}d(z,x))$.

\end{thm}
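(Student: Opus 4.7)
The plan is to first establish the estimate pointwise on $M_z$ via a Riccati-type comparison along minimal geodesics from $z$, and then extend it to a distributional inequality on $M\setminus\{z\}$ by a Calabi-style barrier argument. The Finsler-specific input is Lemma \ref{lm:Lap}, which identifies $\bm{\Delta}u$ with a weighted Riemannian Laplacian in the adapted metric $g^{(u)}$ and reduces the analysis to classical weighted Riemannian comparison geometry applied along each radial geodesic.

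Fix $x \in M_z$ and let $\gamma:[0,L]\to M$, with $L=d(z,x)$, be the unit-speed minimal geodesic from $z$ to $x$. On a neighborhood of $\gamma((0,L])$ the function $u=d(z,\cdot)$ is smooth and $Z:=\Nabla u$ is a smooth unit vector field all of whose integral curves are geodesics from $z$; in particular $Z(\gamma(t))=\dot\gamma(t)$. Lemma \ref{lm:Lap} gives $\bm{\Delta}u = \mathrm{tr}_{g_Z}\mathrm{Hess}^{g_Z}u - \partial_Z V_Z$ in this adapted Riemannian structure. Set $\phi(t):=\bm{\Delta}u(\gamma(t))$. Applying the Bochner identity in $g_Z$ along $\gamma$, restricting to the $(n-1)$-dimensional orthogonal complement of $\dot\gamma$, using Cauchy--Schwarz to bound $|\mathrm{Hess}^{g_Z}u|^2$ from below by $(\mathrm{tr}\,\mathrm{Hess}^{g_Z}u)^2/(n-1)$, and combining with the Finsler identities for $\partial_\xi V_\xi$ and $\partial_\xi^2 V_\xi$ (which, by the invariance remark preceding Definition \ref{df:NRic}, depend only on $\xi=\dot\gamma(t)$), a short algebraic rearrangement absorbing the cross term into the quadratic correction $-(\partial_\xi V_\xi)^2/(N-n)$ produces the scalar Riccati inequality
\begin{equation*}
\phi'(t) + \frac{\phi(t)^2}{N-1} \le -\mathrm{Ric}_N(\dot\gamma(t)) \le -K.
\end{equation*}
Combined with the universal asymptotics $\phi(t) \sim (N-1)/t$ as $t \downarrow 0$ and scalar comparison against the model equation $\phi_K' + \phi_K^2/(N-1) = -K$, this yields $\phi(L) \le \phi_K(L)$, which is the claimed pointwise bound.

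For the distributional estimate, given $x\in M\setminus\{z\}$ choose any minimal geodesic $\gamma$ from $z$ to $x$ and set $u_\epsilon(y):=\epsilon+d(\gamma(\epsilon),y)$ for small $\epsilon>0$. Then $u_\epsilon(x)=u(x)$, $u_\epsilon\ge u$ near $x$ by the triangle inequality, and $x\notin\mathrm{Cut}(\gamma(\epsilon))$ for $\epsilon$ sufficiently small, so the pointwise estimate applies to $u_\epsilon$ at $x$; testing against a nonnegative test function and letting $\epsilon\downarrow 0$ converts this into the distributional inequality for $u$, using monotonicity of the right-hand side in $d(z,\cdot)$. The main obstacle, in my view, is the Riccati derivation itself: it requires carrying out the Bochner computation in the $x$-dependent Riemannian structure $g_{\Nabla u}$ along $\gamma$, where the key fact is that $\Nabla u$ has geodesic integral curves so that $\partial_\xi V_\xi$ and $\partial_\xi^2 V_\xi$ are defined invariantly; the subsequent algebraic matching of terms is exactly what forces the constant $N-1$ and the restriction $N\ge\dim M$ to appear.
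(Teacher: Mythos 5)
Your proposal is correct and shares the paper's overall skeleton: reduction to the weighted Riemannian Laplacian $\Delta_Z$ in the adapted metric $g_Z=g(\cdot,\Nabla u)$ via Lemma \ref{lm:Lap}, a Riccati inequality $-\dot\phi_t\ge\phi_t^2/(N-1)+K$ along radial geodesics, ODE comparison with asymptotic matching at $t=0$, and a Calabi barrier for the distributional statement. Where you genuinely diverge is in the derivation of the Riccati inequality. The paper obtains it abstractly: the bound $N$-$\mathrm{Ric}\ge K$ gives the generalized Bochner ($\Gamma_2$-) inequality $\Gamma_2(v,v)\ge\frac1N(\Delta_Zv)^2+K\,\Gamma(v,v)$, and the Bakry--Qian self-improving property upgrades this, for functions with $\Gamma(u,u)=1$, to $\Gamma_2(u,u)\ge\frac1{N-1}(\Delta_Zu)^2+K$, which is the Riccati inequality once evaluated along $\gamma$. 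You instead carry out the classical pointwise Bochner/index-form computation: Cauchy--Schwarz on the Hessian restricted to the orthogonal complement of $\dot\gamma$ yields the coefficient $1/(n-1)$, and the elementary inequality $\frac{(a+b)^2}{N-1}\le\frac{a^2}{n-1}+\frac{b^2}{N-n}$ (with $a$ the unweighted mean curvature and $b=-\partial_\xi V_\xi$) absorbs the weight terms, producing $N-1$ at the cost of exactly the $-(\partial_\xi V_\xi)^2/(N-n)$ correction built into $\mathrm{Ric}_N$. Both routes are valid: the paper's avoids the explicit second-variation computation and works directly from the curvature-dimension inequality, while yours is more elementary and makes the origin of the constant $N-1$ and of the constraint $N\ge n$ transparent, at the price of having to justify the Bochner identity in the point-dependent metric $g_Z$ --- which the paper's conventions (Ricci curvature, $\partial_\xi V_\xi$, $\partial^2_\xi V_\xi$ and $u=d_Z(z,\cdot)$ are all defined through $g_Z$) do legitimize. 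Two minor remarks: the true asymptotic is $\phi(t)\sim(n-1)/t$ rather than $(N-1)/t$, but since $(n-1)/t\le(N-1)/t$ the comparison with the model solution still forces $t_0=0$; and your explicit barrier $u_\epsilon=\epsilon+d(\gamma(\epsilon),\cdot)$ is precisely the Calabi argument the paper invokes without detail.
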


\begin{proof}
Let us fix $z\in M$ and put $u(x)=d(z,x)$. Then outside of $M_z$ the 
vector field $Z(x):=\Nabla u(x)$ is well-defined, smooth and 
satisfies $F(x,Z(x))=1$. Let $d_Z$ and $\Delta_Z$ denote the 
Riemannian distance and the weighted Laplacian on $M_z$ with the 
Riemannian metric $g_Z(x):=g(x,Z(x))$. Then $u(x)=d_Z(z,x)$ and 
$\bm{\Delta}u(x)=\Delta_Zu(x)$ by Lemma \ref{lm:Lap}. Hence, 
estimating the Finsler Laplacian of the Finsler distance amounts to 
estimating the weighted Riemannian Laplacian of the Riemannian 
distance function. 

Due to our curvature assumption on the Finsler space $(M,F,m)$,
the weighted Riemannian space $(M_z,g_z,m)$ satisfies the curvature bound  $N$-$\mathrm{Ric}\ge K$ in the sense of Definition \ref{df:NRic}.
On weighted Riemannian spaces, the latter is known to be equivalent to a {\it generalized Bochner inequality}
or {\it $\Gamma_2$-inequality in the sense of Bakry-\'Emery}
\begin{equation}\label{Gamma2a}
\Gamma_2(v,v)\ge\frac1N(\Delta_Zv)^2+K\cdot\Gamma(v,v)
\end{equation}
for all smooth functions $v$ on $M_z$.
Here $\Gamma(v,w)=Dv(\nabla_Z w)$ and
\[ \Gamma_2(v,v)=\frac12 \Delta_Z\Gamma(v,v)-\Gamma(\Delta_Zv,v), \]
see \cite{BE}, \cite{Qi}, \cite{Lo}.
The remarkable observation of Bakry and Qian \cite{BQ} is the `self-improving property'
of $(\ref{Gamma2a})$ saying that the validity of the previous estimate (for all smooth $v$) entails
the stronger estimate
\[ \Gamma_2(v,v) \ge \frac1N(\Delta_Zv)^2+K\cdot\Gamma(v,v)
 +\frac{N}{N-1}\left[\frac{\Delta_Zv}N-\frac{\Gamma(v,\Gamma(v,v))}{2\Gamma(v,v)}\right]^2 \]
valid for all smooth functions $v$ with nonvanishing gradient.
Applying the latter to $u(x)=d(z,x)$ and using the fact that $\Gamma(u,u)=1$ yields
\begin{equation}\label{Gamma2c}
\Gamma_2(u,u)\ge\frac1{N-1}(\Delta_Zu)^2+K
\end{equation}
on $M_z$, where $\Gamma_2(u,u)=-D(\Delta_Zu)(\nabla_Zu)=-D(\Delta_Zu)(Z)$.

Now let $\gamma: [0,l)\to M$ be any minimizing, unit speed geodesic 
in $(M,F)$ emanating from $z$. Then $d(z,\gamma_t)=t$ and 
$\dot\gamma_t=Z(\gamma_t)$. Put $\phi_t=\bm{\Delta} u(\gamma_t)$ for 
$t \in (0,l)$. Then $(\ref{Gamma2c})$ together with Lemma 
\ref{lm:Lap} states 
\[ -\dot\phi_t\ge \frac1{N-1}(\phi_t)^2+K \]
on $(0,l)$. Comparison results for ODEs then imply
\begin{equation*}\label{phi-b}
\phi_t \le \sqrt{(N-1)K}\cdot \cot\left(\sqrt{\frac K{N-1}}(t+t_0)\right)
\end{equation*}
for some $t_0\le0$ (and the usual interpretation of the RHS if $K\le0$). Local asymptotic for small $t$ implies $t_0=0$.
This proves the claim on the pointwise estimate of the Laplacian on $M_z$.

\medskip

The extension to a distributional inequality, valid also on the cut locus, follows by the well-known Calabi argument.
\end{proof}

\begin{cor}\label{cr:Lcom}
Assume that $N$-$\mathrm{Ric}\ge K$ and let $u(x)=f(d(z,x))$ for some nondecreasing smooth function
$f: (0,\infty)\to \R$.
Then on $M_z$,
\begin{equation}\label{f-dist}
\bm{\Delta} u(x)\le f''\big( d(z,x) \big) +f'\big( d(z,x) \big) \sqrt{(N-1)K}
 \cdot \cot\bigg(\sqrt{\frac K{N-1}}d(z,x)\bigg)
\end{equation}
$($if $K>0$, with the appropriate modification on the right-hand side for $K \le 0)$.
Similarly, if $v(x)=h(d(x,z))$ for some nonincreasing smooth function $h:(0,\infty)\to\R$.
Then on $M_z$,
\begin{equation}\label{h-dist}
\bm{\Delta} v(x)\ge h''\big( d(x,z) \big) +h'\big( d(x,z) \big) \sqrt{(N-1)K}
 \cdot \cot\bigg(\sqrt{\frac K{N-1}}d(x,z)\bigg).
\end{equation}
In both cases, the estimates extend to hold in the sense of distributions on all of $M\setminus\{z\}$.

If the function $f$  has a smooth extension to $[0,\infty)$ with $f'(0)=0$,
then the inequality $(\ref{f-dist})$ holds on all of $M$ in the sense of distributions.
Analogously for $(\ref{h-dist})$ provided $h'(0)=0$.
\end{cor}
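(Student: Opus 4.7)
The plan is to reduce this to Theorem~\ref{th:Lcom} via a chain rule computation for $\bm{\Delta}$ applied to a radial function. Set $\rho(x) := d(z,x)$, so that $u = f\circ\rho$ and, on $M_z$, the distance function $\rho$ is smooth with $F(x,\Nabla\rho(x))=1$.

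\textbf{Step 1 (Chain rule on $M_z$).} Since $f' \ge 0$, one has $Du = f'(\rho)\,D\rho$, and by the positive one‑homogeneity of $J^*(x,\cdot)$,
\[
\Nabla u \;=\; J^*\bigl(x,f'(\rho)D\rho\bigr) \;=\; f'(\rho)\,\Nabla\rho.
\]
Applying the Leibniz rule to $\mathrm{div}$ and using $D\rho(\Nabla\rho) = F^2(x,\Nabla\rho) = 1$ on $M_z$,
\[
\bm{\Delta} u \;=\; \mathrm{div}\bigl(f'(\rho)\Nabla\rho\bigr) \;=\; f''(\rho) + f'(\rho)\,\bm{\Delta}\rho.
\]
Multiplying the pointwise bound of Theorem~\ref{th:Lcom} for $\bm{\Delta}\rho$ by the nonnegative factor $f'(\rho)$ gives~(\ref{f-dist}) on $M_z$.

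\textbf{Step 2 (Distributional extension on $M\setminus\{z\}$).} The pointwise estimate on $M_z$ upgrades to a distributional estimate on $M\setminus\{z\}$ by the Calabi argument already invoked at the end of the proof of Theorem~\ref{th:Lcom}: one approximates $\rho$ from above by strict upper barriers $\rho_\varepsilon$ built from nearby regular distance functions, verifies the estimate for $f\circ\rho_\varepsilon$ (using $f'\ge 0$ so that composition with $f$ preserves the inequality direction), and passes to the limit against nonnegative test functions.

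\textbf{Step 3 (Reverse case for $v = h\circ d(\cdot,z)$).} Apply Steps 1--2 in the reverse Finsler space $(M,\overleftarrow F,m)$. Under reversal, the forward distance from $z$ becomes $\overleftarrow d(z,x)=d(x,z)$, while a direct check from the definitions yields the identity $\overleftarrow{\bm{\Delta}} w = -\bm{\Delta}(-w)$ (already recorded in the remark after (\ref{eq:Ld2})). The curvature bound is preserved because geodesics of $\overleftarrow F$ are reversals of geodesics of $F$, giving $\overleftarrow{\mathrm{Ric}}_N(\xi)=\mathrm{Ric}_N(-\xi)$, and the hypothesis $N$-$\mathrm{Ric}\ge K$ is invariant under $\xi\mapsto-\xi$. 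Applied to the nondecreasing function $\tilde h := -h$, Step~1 gives an upper bound on $\overleftarrow{\bm{\Delta}}(\tilde h\circ\overleftarrow d(z,\cdot)) = -\bm{\Delta} v$, which is~(\ref{h-dist}).

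\textbf{Step 4 (Extension across $z$ when $f'(0)=0$).} Near $z$, $\bm{\Delta}\rho$ blows up like $(N-1)/\rho$, but $f'(\rho)$ vanishes at $\rho=0$, and by the smoothness of $f$ on $[0,\infty)$ with $f'(0)=0$ we have $f'(\rho)=O(\rho)$, so the product $f'(\rho)\bm{\Delta}\rho$ is locally bounded near $z$. Moreover $\Nabla u = f'(\rho)\Nabla\rho$ extends continuously to $0$ at $z$, so no Dirac mass is generated. A standard cutoff argument (test against $\varphi(1-\chi_\varepsilon)$ where $\chi_\varepsilon$ is a cutoff near $z$, and let $\varepsilon\downarrow 0$) upgrades the distributional inequality from $M\setminus\{z\}$ to all of $M$. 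The analogous computation handles the $h'(0)=0$ case via Step~3.

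\textbf{Main obstacle.} The only nontrivial point is the distributional extension across the cut locus and across $z$: the pointwise identity from Step~1 fails on $\mathrm{Cut}(z)$, so one must rely on Calabi's barrier argument and on the sign of $f'$ (resp.\ $-h'$) to ensure that one‑sided comparison is preserved under postcomposition with $f$ (resp.\ $h$). Once that is in place, the algebra is straightforward.
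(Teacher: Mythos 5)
Your proposal is correct and follows essentially the same route as the paper: the pointwise chain rule $\bm{\Delta}(f\circ d(z,\cdot))=f''+f'\bm{\Delta}d(z,\cdot)$ combined with Theorem \ref{th:Lcom} and $f'\ge 0$, the passage to the reverse Finsler structure (with $-h$ nondecreasing and $\overleftarrow{\bm{\Delta}}w=-\bm{\Delta}(-w)$) for the second inequality, Calabi's barrier argument across the cut locus, and a no-Dirac-mass-at-$z$ cutoff argument when $f'(0)=0$ (where the paper instead reduces to $f(r)=r^2$ and tests with $(1-\varepsilon^{-2}d^2)_+$, an equivalent device).
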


\begin{proof}
The first claim follows from Theorem \ref{th:Lcom} by simple application of the chain rule:
\[ \bm{\Delta} f(u) =\mathrm{div}\big( \Nabla f(u) \big) =\mathrm{div}(f'(u)\Nabla u)
 =f'(u) \bm{\Delta} u + f''(u) Du(\Nabla u) \]
and the fact that $Du(\Nabla u)=1$.

For the second claim, a similar argumentation with $v(x)=d(x,z)$ yields
\begin{align*}
\bm{\Delta} h(v) &=\mathrm{div}\big( \Nabla h(v) \big) =\mathrm{div}\big( -h'(v)\Nabla(-v) \big) \\
&=h'(v)\big(-\bm{\Delta}(-v) \big) +h''(v)\cdot D(-v) \big(\Nabla(-v) \big).
\end{align*}
Observing that $v(x)=\overleftarrow{d}(z,x)$, 
$D(-v)(\Nabla(-v))=Dv(\overleftarrow{\Nabla}v)=1$ and 
$(-\bm{\Delta}(-v))=\overleftarrow{\bm{\Delta}}v$, the claim follows 
as before since the bound $N$-$\mathrm{Ric}\ge K$ for $(M,F,m)$ 
implies the same bound for the Finsler space with reverse structure 
$(M,\overleftarrow{F},m)$. 

\medskip

It remains to prove that (\ref{f-dist}) holds at the origin in the 
sense of distributions provided $f'(0)=0$. Without restriction, we 
may assume $f(r)=r^2$. (Otherwise, choose smooth $g$ with 
$f(r)=g(r^2)$ and use chain rule.)
Obviously, for $u(x)=d^2(z,x)$, the distribution $\bm{\Delta}u$ assigns no mass to the origin.
(Choose $\psi_{\varepsilon}(x)=(1-\varepsilon^{-2}d^2(z,x))_+$ as test function.)
\end{proof}

\begin{cor}\label{cr:sub}
Assume that $N$-$\mathrm{Ric}\ge K$ and let $h=h(t,r)$ be a smooth solution to the PDE
\begin{equation}\label{pde-rad}
\partial_th=\partial_r^2h+\partial_r h \sqrt{(N-1)K}\cdot \cot\bigg(\sqrt{\frac K{N-1}}r\bigg)
\end{equation}
on $(0,\infty)\times(0,L)$ $($if $K>0$, with the appropriate modification on the right-hand side for $K \le 0)$,
where $L=\pi\sqrt{(N-1)/K}$ if $K>0$ and 
$L=\infty$ else. Assume in addition $\partial_rh\le0$ on 
$(0,\infty)\times (0,L)$ and $\partial_rh=0$ on $(0,\infty)\times 
\{0\}$. Then for any $z\in M$ the function $u(t,x)=h(t,d(x,z))$ is a 
subsolution to the heat equation on $M$. That is, $\partial_t 
u\le\bm{\Delta}u$ in the sense of distributions on $(0,\infty)\times 
M \setminus \{z\}$. 
\end{cor}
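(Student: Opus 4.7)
The plan is to reduce this to the Laplacian comparison already established in Corollary \ref{cr:Lcom}. Since $h$ is nonincreasing in $r$, for each fixed $t>0$ the function $x\mapsto u(t,x)=h(t,d(x,z))$ is exactly of the form covered by the second half of that corollary (the ``$h$-version''). Applying it with $h$ replaced by $h(t,\cdot)$ yields, in the sense of distributions on $M\setminus\{z\}$,
\[
\bm{\Delta}u(t,\cdot)(x)\ \ge\ \partial_r^2 h\bigl(t,d(x,z)\bigr)+\partial_r h\bigl(t,d(x,z)\bigr)\cdot \sqrt{(N-1)K}\cot\!\Bigl(\sqrt{\tfrac{K}{N-1}}\,d(x,z)\Bigr),
\]
with the usual interpretation if $K\le 0$. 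The hypothesis $\partial_r h\le 0$ is exactly what makes Corollary \ref{cr:Lcom} applicable, and the condition $\partial_r h(t,0)=0$ is what was needed there to justify the extension across the origin (which is anyway irrelevant here, since we only claim the inequality on $M\setminus\{z\}$).

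Next, the PDE $(\ref{pde-rad})$ says precisely that the RHS above equals $\partial_t h(t,d(x,z))$. But $\partial_t u(t,x)=\partial_t h(t,d(x,z))$ by the chain rule, so combining the two gives
\[
\partial_t u(t,x)\ \le\ \bm{\Delta} u(t,x)
\]
in the sense of distributions on $M\setminus\{z\}$ for each fixed $t$. Integrating against a nonnegative test function $\varphi\in\mathcal{C}_c^\infty((0,\infty)\times(M\setminus\{z\}))$ in both $t$ and $x$ and using Fubini then upgrades this to the claimed distributional inequality on the parabolic domain $(0,\infty)\times M\setminus\{z\}$.

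There is one point to verify: the PDE must make sense along the trajectory $r=d(x,z)$, i.e.\ we need $d(x,z)<L$ for all $x\in M$. When $K\le 0$ this is automatic since $L=\infty$. When $K>0$ it follows from the Bonnet--Myers--type diameter bound $\mathrm{diam}(M)\le \pi\sqrt{(N-1)/K}=L$, which is a standard consequence of $N$-$\mathrm{Ric}\ge K$ (and can in fact be extracted from Theorem \ref{th:Lcom} applied to $u(x)=d(z,x)$, since the $\cot$ term forces a singularity of $\bm{\Delta}u$ before $r=L$). The main technical point, and the one that requires care rather than calculation, is the distributional character of the comparison at the cut locus of $z$; but this is already handled inside Corollary \ref{cr:Lcom} via the Calabi argument invoked there, and nothing further is needed at that step.
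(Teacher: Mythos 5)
Your argument is correct and is exactly the reduction the paper intends (the paper states this corollary without proof, immediately after Corollary \ref{cr:Lcom}): apply the nonincreasing-$h$ half of Corollary \ref{cr:Lcom} at each fixed $t$, use the PDE $(\ref{pde-rad})$ to identify the right-hand side with $\partial_t h(t,d(x,z))=\partial_t u(t,x)$, and integrate against nonnegative space-time test functions. Your side remarks — that $\partial_r h(t,0)=0$ is not needed for the conclusion on $M\setminus\{z\}$, and that the Bonnet--Myers bound $\mathrm{diam}(M)\le L$ for $K>0$ keeps $d(x,z)$ in the domain of $h$ — are both accurate.
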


\begin{example} \rm
(i) Assume that $N$-$\mathrm{Ric}\ge 0$.
Then for any $z\in M$ the function
\[ u(t,x)=t^{-N/2} \exp\bigg(-\frac{d^2(x,z)}{4t}\bigg) \]
is a subsolution to the heat equation on $M$.

(ii) Assume that $3$-$\mathrm{Ric}\ge -2$.
Then for any $z\in M$ the function
\[ u(t,x)=t^{-3/2} \frac{d(x,z)}{\sinh(d(x,z))} \exp\bigg(-t-\frac{d^2(x,z)}{4t}\bigg) \]
is a subsolution to the heat equation on $M$.
\end{example}

\begin{thm}[Cheeger-Yau Estimate]\label{th:CY}
Assume $N$-$\mathrm{Ric}\ge K$ for some pair $K,N \in \R$ with $N \ge \dim M$ and let $u$ be a solution to the
heat equation on $[0,\infty)\times M$ with $u(0,\cdot)\ge h_0(d(\cdot,z))$ for some $z\in M$
and some smooth decreasing function $h_0$ on $[0,L)$.
Then
\begin{equation}
u(t,x)\ge h^{K,N}\big( t,d(x,z) \big)
\end{equation}
for all $t>0$ and $x\in M$ where $h^{K,N}$ denotes the solution to the PDE $(\ref{pde-rad})$
with initial condition $h^{K,N}(0,\cdot)=h_0$ and Neumann boundary condition $\partial_r h^{K,N}(\cdot,0)=0$.
\end{thm}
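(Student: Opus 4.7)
The plan is to set $v(t,x):=h^{K,N}(t, d(x,z))$ and establish $u\ge v$ by a parabolic comparison argument that exploits the monotonicity of the Legendre transform encoded in Lemma \ref{lm:g}(iv). Although the Finsler Laplacian is nonlinear, the map $w\mapsto -\Nabla w$ from $H^1$ to $L^2$-valued vector fields is monotone, and this is exactly what is needed to make a Cheeger--Yau type comparison go through.

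First I would verify that $v$ is a distributional subsolution to the heat equation on $(0,\infty)\times M$. The one-dimensional parabolic maximum principle applied to the radial PDE~(\ref{pde-rad}) propagates the properties $\partial_r h^{K,N}\le 0$ on $(0,\infty)\times(0,L)$ and $\partial_r h^{K,N}(\cdot,0)=0$ from the initial datum $h_0$, so Corollary \ref{cr:sub} directly gives $\partial_t v\le \bm{\Delta}v$ in the distributional sense on $(0,\infty)\times (M\setminus\{z\})$. The vanishing of the radial derivative at the origin together with a Calabi-type smoothing argument, analogous to the one invoked at the end of Corollary \ref{cr:Lcom}, then extends the inequality across both the pole $z$ and the cut locus of $z$, yielding the subsolution property on all of $(0,\infty)\times M$.

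Next I would plug the nonnegative test function $\varphi_t:=(v_t-u_t)_+\in H^1_0(M)$ into the weak formulation of the heat equation for $u$ and into the distributional inequality for the subsolution $v$, and subtract. Since $D\varphi_t=(Dv_t-Du_t)\cdot\mathbf{1}_{\{v_t>u_t\}}$ almost everywhere, and since $Dw=J(x,\Nabla w)$ by Lemma \ref{lm:g}(i)--(iii), the crucial integrand becomes
\begin{equation*}
D\varphi_t(\Nabla v_t-\Nabla u_t)
 =\bigl(J(x,\Nabla v_t)-J(x,\Nabla u_t)\bigr)(\Nabla v_t-\Nabla u_t)\cdot\mathbf{1}_{\{v_t>u_t\}}
 \ge \kappa^*_M\,F^2(\Nabla v_t-\Nabla u_t)\cdot\mathbf{1}_{\{v_t>u_t\}}\ge 0
\end{equation*}
by Lemma \ref{lm:g}(iv). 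Substituting back yields $\tfrac{1}{2}\partial_t\|\varphi_t\|_{L^2}^2\le 0$, while $\|\varphi_0\|_{L^2}=0$ follows from the hypothesis $u(0,\cdot)\ge h_0(d(\cdot,z))=v(0,\cdot)$. This forces $\varphi_t\equiv 0$ and hence $u\ge v$, which is the claim.

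The main obstacle is the rigorous justification of this test-function computation: $\varphi_t$ lies only in $H^1_0(M)$, depends on $t$, and $v$ is not smooth on $\{z\}\cup\mathrm{Cut}(z)$. I would handle this by replacing $\varphi_t$ with $\min\bigl(K,(v_t-u_t-\varepsilon)_+\bigr)$ and mollifying in time via Steklov averages so that the chain rule for $\tfrac{d}{dt}\|\cdot\|_{L^2}^2$ becomes classical and both sides of the inequality pair with $\varphi$ pointwise; letting $\varepsilon\downarrow 0$ and $K\uparrow\infty$ then recovers the estimate by dominated convergence. The required Sobolev regularity of $u$ is furnished by Theorem \ref{thm:H2-reg} and that of $v_t$ by the smoothness of $h^{K,N}$ together with the Lipschitz character of $d(\cdot,z)$; the singular part of $\bm{\Delta}v$ supported on $\mathrm{Cut}(z)$ is nonpositive by the Calabi argument and hence only strengthens the inequality.
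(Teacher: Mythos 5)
Your proposal is correct and follows essentially the same route as the paper: first propagate the monotonicity $\partial_r h^{K,N}(t,\cdot)\le 0$ by the one-dimensional maximum principle so that Corollary \ref{cr:sub} makes $v(t,x)=h^{K,N}(t,d(x,z))$ a subsolution, then conclude $u\ge v$ by a parabolic comparison principle. The only difference is one of detail: where the paper simply invokes ``the parabolic maximum principle,'' you spell out why it holds for this nonlinear operator, namely via the test function $(v_t-u_t)_+$ and the monotonicity $\bigl(J(x,\eta)-J(x,\xi)\bigr)(\eta-\xi)\ge\kappa^*F^2(x,\eta-\xi)$ of Lemma \ref{lm:g}(iv), together with the extension of the subsolution inequality across $z$ and $\mathrm{Cut}(z)$ — all of which is a sound and welcome elaboration of the step the paper leaves implicit.
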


\begin{proof}
We first observe that $\partial_r h_0\le0$ implies $\partial_r h^{K,N}(t,\cdot)\le0$ for all $t>0$.
Then the claim follows from the parabolic maximum principle along with Corollary \ref{cr:sub}.
\end{proof}

Next, we are going to apply the above estimate to the `fundamental solution' for the heat equation on $M$.
What we have in mind is to study $p_t(x,z)=P_t \delta_z(x)$,
the solution to the heat equation with initial data $\delta_z$.
Unfortunately, $P_t\delta_z$ is not defined since our heat semigroup only acts on $L^2(M)$
(or on $\bigcup_{1 \le p \le \infty}L^p(M)$, see Theorem \ref{thm:cont}),
but -- until now -- not on measures.
We thus will define $p_t(x,z)$ via approximation of the initial data $\delta_z$.

For this purpose, let
\[ \rho(z)=\lim_{r\to 0}\frac{m(B^-(z,r))}{c_n\cdot r^n}\]
with $c_n:=\pi^{n/2}/\Gamma(n/2+1)$ being the volume of the $n$-dimensional Euclidean unit sphere.
Recall that $B^-(z,r)=\{x\in M: d(x,z)<r\}$ denotes the backward open ball in $M$.
Given $K\in \R$ and $n\in \N$ let $p^{K,n}_t(r)$ denote the unique solution of the above PDE (\ref{pde-rad})
with $p_t^{K, n}(r)dr\to \delta_0(dr)$ weakly as $t\to0$.
Recall that for each fixed $\zeta$ in the model space $\mathbb{M}^{K,n}$ of dimension $n$ and constant sectional curvature $K/(n-1)$
the function $(t,\xi)\mapsto p_t^{K,n}(d(\xi,\zeta))$ is a solution of the heat equation on $\mathbb{M}^{K,n}$.

\begin{thm}
Assume that the Finsler space $(M,F,m)$ is compact and satisfies $n$-$\mathrm{Ric}\ge K$ for some $K \in \R$
$($with $n$ being the dimension of $M)$.

\begin{itemize}
\item[{\rm (i)}]
For all $t>0$ and all $x,z\in M$
\[ p_t(x,z):=\frac1{\rho(z)}\, \lim_{\varepsilon\to0}P_{t-\varepsilon} u_{\varepsilon}(x) \]
exists as a monotone limit with $u_\varepsilon(x):=p^{K,n}_{\varepsilon}(d(x,z))$.

\item[{\rm (ii)}]
For each $z\in M$ the function $(t,x)\mapsto p_t(x,z)$ is a solution to the heat equation on $(0,\infty) \times M$
with $p_t(x,z)m(dx)\to\delta_z(dx)$ weakly in the sense of measures as $t\to0$.

\item[{\rm (iii)}]
For all $t>0$ and all $x,z\in M$
\[ p_t(x,z)\ge\frac1{\rho(z)} \, p^{K,n}_t\big( d(x,z) \big).\]
\end{itemize}
\end{thm}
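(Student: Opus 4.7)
The plan is to combine the Cheeger--Yau comparison (Theorem \ref{th:CY}) with the semigroup property of $P_t$ established after Proposition \ref{pr:uniq}. Fix $0 < \varepsilon_1 < \varepsilon_2 < t$; writing $P_{t-\varepsilon_1} = P_{t-\varepsilon_2} \circ P_{\varepsilon_2-\varepsilon_1}$ and applying Theorem \ref{th:CY} with initial datum $u_{\varepsilon_1}$ over time $\varepsilon_2 - \varepsilon_1$ (using the smooth decreasing radial profile $p_{\varepsilon_1}^{K,n}$, whose evolution under (\ref{pde-rad}) at time $s$ is $p_{\varepsilon_1+s}^{K,n}$) yields $P_{\varepsilon_2-\varepsilon_1} u_{\varepsilon_1}(x) \ge p_{\varepsilon_2}^{K,n}(d(x,z)) = u_{\varepsilon_2}(x)$. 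Order preservation of $P_{t-\varepsilon_2}$---which can be deduced by testing the subsolution inequality $\partial_t(\phi - u) \le \bm{\Delta}\phi - \bm{\Delta}u$ against $(\phi - u)_+$ and invoking the strict monotonicity (\ref{eq:2uni-mod}) of $J^*$---then gives $P_{t-\varepsilon_1} u_{\varepsilon_1} \ge P_{t-\varepsilon_2} u_{\varepsilon_2}$, proving the monotone limit in (i). Part (iii) follows immediately from a single application of Theorem \ref{th:CY} to $u_\varepsilon$ over time $t-\varepsilon$: $P_{t-\varepsilon} u_\varepsilon(x) \ge p_t^{K,n}(d(x,z))$, which passes to the limit and survives normalization by $\rho(z)$.

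For (ii), use mass preservation of $P_t$ on compact $M$ (noted in the remark following the definition of global solutions): $\int_M P_{t-\varepsilon} u_\varepsilon \, dm = \int_M u_\varepsilon \, dm$. Rewriting the right-hand side as $\int_0^\infty p_\varepsilon^{K,n}(r)\, d[m(B^-(z,r))]$ and combining the volume asymptotics $m(B^-(z,r)) \sim \rho(z)\, c_n r^n$ as $r \to 0$ with the model-space normalization $\int_0^\infty p_\varepsilon^{K,n}(r) \cdot n c_n r^{n-1} dr = 1$ shows $\int u_\varepsilon \, dm \to \rho(z)$. Monotone convergence then yields $\int_M p_t(\cdot,z)\, dm = 1$, so $p_t(\cdot,z)$ is finite $m$-a.e. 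For the weak heat-equation identity, the approximations $v_\varepsilon(t,x) := \rho(z)^{-1} P_{t-\varepsilon} u_\varepsilon(x)$ are local solutions on $(\varepsilon,\infty) \times M$, uniformly bounded in $L^\infty$ on every slab $[t_0,t_1] \times M$ with $t_0 > 0$ by Proposition \ref{prop:harnack}, and uniformly bounded in $H^1_{\mathrm{loc}}$ by standard Caccioppoli estimates. Weak $H^1_{\mathrm{loc}}$-compactness then yields a limit for $Dv_\varepsilon$, and a Minty--Browder argument based on the strict monotonicity of $\alpha \mapsto J^*(x,\alpha)$ (Lemma \ref{lm:g}(iv)--(v)) identifies the weak limit of $\Nabla v_\varepsilon$ with $\Nabla p_t(\cdot,z)$, allowing passage to the limit in (\ref{local-heat}).

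For the Dirac initial condition, $\rho(z)^{-1} p_t^{K,n}(d(\cdot,z))\, m \to \delta_z$ as $t \to 0$ by the model-space concentration together with the volume asymptotic defining $\rho(z)$. Combining this with the lower bound (iii) gives $\liminf_{t \to 0} \int \phi\, p_t(\cdot,z)\, dm \ge \phi(z)$ for every $\phi \in \mathcal{C}(M, \R_+)$; applying the same inequality to $\|\phi\|_\infty - \phi$ and using the total mass identity $\int p_t(\cdot,z)\, dm = 1$ produces the matching upper bound, hence weak convergence to $\delta_z$. The principal obstacle throughout is the passage to the limit in the nonlinear flux $\Nabla v_\varepsilon = J^*(Dv_\varepsilon)$: monotone $L^1$-convergence of $v_\varepsilon$ and weak $H^1$-convergence of $Dv_\varepsilon$ alone do not determine the limit because $J^*$ is nonlinear, and only the monotonicity (\ref{eq:2uni-mod}) suffices to close the argument. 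A secondary technicality is verifying $\int u_\varepsilon\, dm \to \rho(z)$, which rests on local comparability of $d$ with the Euclidean norm near $z$ and on smoothness of the volume form.
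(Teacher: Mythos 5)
Your argument is correct, and for parts (i), (iii), the mass computation, and the weak convergence to $\delta_z$ it coincides with the paper's proof (Cheeger--Yau plus the semigroup identity $P_{t-\varepsilon_1}=P_{t-\varepsilon_2}\circ P_{\varepsilon_2-\varepsilon_1}$ and order preservation for the monotonicity; the $(f+C)$-trick combined with total mass $1$ for the Dirac initial condition). You even make explicit the comparison principle that the paper uses silently when it applies $P_{t-s}$ to both sides of $P_{s-r}u_r\ge u_s$. The one step where you genuinely diverge is the identification of the monotone limit as a solution in (ii). You go through uniform $L^\infty$ and Caccioppoli bounds, weak $H^1$-compactness, and a Minty--Browder argument to pass to the limit in the nonlinear flux $J^*(Dv_\varepsilon)$; this works (strict monotonicity of $J^*$ from Lemma \ref{lm:g}(iv)--(v) is exactly what Minty's trick needs, and on compact $M$ the energy identity required to close it comes from testing the equation with $v_\varepsilon$ itself), but it is the laborious route. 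The paper instead observes that monotone convergence of the equicontinuous family $v_\varepsilon(t,\cdot)$ on compact $M$ upgrades to uniform, hence $L^2$, convergence for each fixed $t$, and then invokes the $L^2$-non-expansiveness of $P_{t-s}$ (Proposition \ref{pr:uniq}) to conclude that the limit curve satisfies $p_t=P_{t-s}p_s$, i.e.\ it is itself a semigroup orbit and therefore automatically a global solution. That argument bypasses the nonlinear-flux limit entirely --- the issue you correctly flag as the principal obstacle --- so you may wish to note that the contraction property renders the Minty--Browder machinery unnecessary here.
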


\begin{proof}
Throughout the proof we fix $K$ and $z\in M$.
(i) According to the previous theorem
\begin{equation}\label{mon-fund}
P_{t-s}u_s(x)\ge p^{K,n}_t\big( d(x,z) \big)
\end{equation}
for all $0<s<t$ and all $x\in M$.
Hence, for all $0<r<s<t$
\[ P_{t-r}u_r(x)=P_{t-s}\left(P_{s-r}u_r\right)(x)\ge P_{t-s}\Big( p^{K,n}_s\big( d(\cdot,z)\big) \Big)(x)=P_{t-s}u_s(x).\]
This proves the monotonicity and thus the existence of the limit.

(iii) follows immediately from ($\ref{mon-fund}$) as $s\to0$.

(ii) Given $s>0$, for each $\varepsilon \in (0,s)$ the function $v_\varepsilon(t,x):=P_{t-\varepsilon}u_\varepsilon(x)$
is a nonnegative solution to the heat equation on $[s,\infty) \times M$.
Hence, in particular it satisfies the parabolic Harnack inequality and,
with $|\partial B^-(z,r)| :=(\partial/\partial r)m(B^-(z,r))$,
\[ \int_M v_\varepsilon (t,x)\, m(dx)=\int_M u_\varepsilon(x)\,m(dx)
 =\int_0^\infty p_\varepsilon^{K,n}(r)\cdot |\partial B^-(z,r)|\, dr \quad\to\quad\rho(z) \]
uniformly in $\varepsilon\in (0,s)$ as $s\to0$.
Thus the monotone convergence of $v_\varepsilon(t,x)$ together with the compactness of $M$ imply uniform convergence in $x$
as well as $L^2$-convergence (for each fixed $t> 2s$) as $\varepsilon\to0$.
Together with the $L^2$-contraction property of the heat semigroup this then yields that the limit is again a solution
to the heat equation on $(2s,\infty) \times M$.

The proof of the weak convergence follows easily from property (iii).
Indeed, for each continuous function $f$ on $M$, bounded in modulus by $C$, we obtain
\begin{align*}
\int f(x)p_t(x,z)\,m(dx) &= -C +\int (f(x)+C)p_t(x,z)\,m(dx)\\
&\ge -C+\int (f(x)+C)\frac1{\rho(z)}p_t^{K,n}\big( d(x,z) \big) \,m(dx)\ \to \ f(z)
\end{align*}
as $t\to0$.
Similarly, we deduce $\limsup_{t\to0}\int f(x)p_t(x,z)\,m(dx)\le f(z)$
which then proves the claim.
\end{proof}

\section{The Finsler Structure of the Wasserstein Space}\label{sc:Wass}

In this chapter, we introduce the Finsler structure of the Wasserstein space over a smooth, compact Finsler manifold.
This concept goes back to Otto's pioneering work for Euclidean spaces (\cite{Ot}).
Our discussion follows (\cite{Vi1} and) \cite[\S 8]{AGS} for Hilbert spaces and \cite{Vi2} for Riemannian manifolds as well.

We denote by $\mathcal{P}(M)$ the set of all Borel probability measures on $M$,
and $\mathcal{P}_{\mathrm{ac}}(M) \subset \mathcal{P}(M)$ stands for the subset consisting
of absolutely continuous measures with respect to $m$.
Given $\mu,\nu \in \mathcal{P}(M)$, we say that $\pi \in \mathcal{P}(M \times M)$ is a {\it coupling}
of $(\mu,\nu)$ if its marginals are $\mu$ and $\nu$.

\begin{defn} \rm
For $\mu,\nu \in \mathcal{P}(M)$, we define the {\it $L^2$-Wasserstein distance} $d_W(\mu,\nu)$ by
\[ d_W(\mu,\nu) :=\inf_{\pi} \bigg( \int_{M \times M} d^2(x,y) \,d\pi(x,y) \bigg)^{1/2}, \]
where the infimum is taken over all couplings $\pi \in \mathcal{P}(M \times M)$ of $(\mu,\nu)$.
A coupling $\pi$ of $(\mu,\nu)$ is said to be {\it optimal} if it attains the infimum above.
\end{defn}

Given nonnegative functions $\rho, \sigma \in L^2(M)$ with $\mu:=\rho m, \nu:=\sigma m \in \mathcal{P}_{\mathrm{ac}}(M)$,
consider the coupling $\pi$ of $(\mu,\nu)$ given by
$\pi=\mathrm{diag}_{\sharp}(\min\{ \mu,\nu \}) +(\mu-\nu)_+ \times (\nu-\mu)_+$,
where $\mathrm{diag}(x):=(x,x)$ for $x \in M$ and $(\mu-\nu)_+(A)=\max\{ \mu(A)-\nu(A),0 \}$
for each Borel set $A \subset M$.
Then we have
\begin{align*}
d_W(\mu,\nu) &\le \mathrm{diam}(M) \Big\{ \big[ (\mu-\nu)_+ \times (\nu-\mu)_+ \big](M \times M) \Big\}^{1/2} \\
&=\frac{\mathrm{diam}(M)}{2} \| \rho-\sigma \|_{L^1}
 \le \frac{\mathrm{diam}(M) m(M)^{1/2}}{2} \| \rho-\sigma \|_{L^2}.
\end{align*}
Hence, if a curve in $L^2(M) \cap \mathcal{P}_{\mathrm{ac}}(M)$ is (locally) Lipschitz continuous as a curve in $L^2(M)$,
then it is (locally) Lipschitz continuous also as a curve in $\mathcal{P}(M)$.
In particular, the heat flow constructed in Theorem \ref{th:glob} starting from $u_0 \in H^1(M)$ with $u_0m \in \mathcal{P}(M)$
is locally Lipschitz continuous on $(0,\infty)$ as a curve in $\mathcal{P}(M)$.

A function $\varphi:M \rightarrow \R$ is said to be {\it $d^2/2$-concave} if there is a function
$\psi:M \rightarrow \R$ such that
\[ \varphi(x) =\psi^{\bar{c}}(x) :=\inf_{y \in M} \{ d^2(x,y)/2 -\psi(y) \} \]
holds for all $x \in M$.
Here $\psi^{\bar{c}}$ is called the {\it $\bar{c}$-transform} of $\psi$.
We similarly define the {\it $c$-transform} $\varphi^c$ of $\varphi$ by
$\varphi^c(y):=\inf_{x \in M}\{ d^2(x,y)/2 -\varphi(x) \}$.
Then $\varphi \le (\varphi^c)^{\bar{c}}$ is always true and $\varphi$ is $d^2/2$-concave
if and only if $\varphi=(\varphi^c)^{\bar{c}}$.
Moreover, any $d^2/2$-concave function is Lipschitz continuous and twice differentiable a.e.\ (see \cite{Ouni}).

We say that $\varphi$ is {\it $d^2/2$-convex} if $-\varphi$ is $d^2/2$-concave.
Then the Brenier-McCann characterization of optimal transport states the following (see \cite{Oint}):

\begin{thm}
For any $\mu \in \mathcal{P}_{\mathrm{ac}}(M)$ and any $\nu \in \mathcal{P}(M)$,
there exists a unique $d^2/2$-convex function $\varphi:M \rightarrow \R$
$($up to an additive constant$)$ such that the map $T(x) :=\exp_x(\Nabla \varphi(x))$
is a unique optimal transport from $\mu$ to $\nu$ in the sense that $\pi:=(\mathrm{Id}_M \times T)_{\sharp} \mu$
is a unique optimal coupling of $(\mu,\nu)$.
Furthermore, the curve $(\mu_t)_{t \in [0,1]}$ given by $\mu_t=(T_t)_{\sharp}\mu$ with $T_t(x)=\exp_x(t\Nabla \varphi(x))$
is a unique minimal geodesic from $\mu$ to $\nu$.
\end{thm}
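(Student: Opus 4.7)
The approach is the classical Brenier--McCann program adapted to the Finsler setting. Since $M$ is compact and $d^2/2$ is continuous and bounded, Monge--Kantorovich duality provides an optimal coupling $\pi$ of $(\mu,\nu)$ together with a dual maximizer, which I take of $c$-conjugate form: write it as $(-\varphi,\psi)$ with $\varphi$ a $d^2/2$-convex function on $M$ (so $-\varphi$ is $d^2/2$-concave) and $\psi(y)=\inf_x\{\varphi(x)+d^2(x,y)/2\}$. This yields the global inequality $\psi(y)-\varphi(x)\le d^2(x,y)/2$ with equality $\pi$-almost everywhere on $\supp\pi$.

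The representation $\varphi(x)=\sup_y\{\psi(y)-d^2(x,y)/2\}$ together with the local semi-concavity of $x\mapsto d^2(x,y)/2$ shows that $\varphi$ is locally semi-concave, hence locally Lipschitz and (by an Alexandrov-type theorem) twice differentiable $m$-almost everywhere; since $\mu\ll m$, $\varphi$ is differentiable $\mu$-almost everywhere. For any such differentiability point $x_0$ and any $y_0$ with $(x_0,y_0)\in\supp\pi$, the equality $\psi(y_0)=\varphi(x_0)+d^2(x_0,y_0)/2$ combined with the global inequality shows that $x\mapsto\varphi(x)+d^2(x,y_0)/2$ attains a minimum at $x_0$, whence $D\varphi(x_0)=-D_x[d^2(\cdot,y_0)/2](x_0)$.

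To identify $y_0$ in terms of the Finsler gradient $\Nabla\varphi(x_0)$, I invoke the Finsler first variation formula: for $y_0\notin\mathrm{Cut}(x_0)$ and $\gamma:[0,1]\to M$ the unique minimal geodesic from $x_0$ to $y_0$, one has $D_x[d^2(\cdot,y_0)/2](x_0)=-J(x_0,\dot\gamma(0))$. Combined with the display above, this yields $D\varphi(x_0)=J(x_0,\dot\gamma(0))$, and applying the Legendre duality $J^*=J^{-1}$ from Lemma~1.1(i) gives $\Nabla\varphi(x_0)=\dot\gamma(0)$, so $y_0=\exp_{x_0}(\Nabla\varphi(x_0))=T(x_0)$. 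Hence $\pi=(\mathrm{Id}_M\times T)_\sharp\mu$ is uniquely determined, and $\Nabla\varphi$ is uniquely determined $\mu$-almost everywhere, which in turn determines $\varphi$ up to an additive constant. For the geodesic claim, set $\mu_t=(T_t)_\sharp\mu$: the coupling $(T_s,T_t)_\sharp\mu$ yields $d_W(\mu_s,\mu_t)\le|t-s|d_W(\mu,\nu)$ by applying the definition of $d_W$ and using that $t\mapsto \exp_x(t\Nabla\varphi(x))$ is a geodesic of speed $d(x,T(x))$ for $\mu$-a.e. $x$. The triangle inequality combined with $d_W(\mu_0,\mu_1)=d_W(\mu,\nu)$ then forces equality on all subintervals of $[0,1]$, so $(\mu_t)$ is a minimal geodesic; uniqueness of this geodesic follows from the uniqueness of optimal couplings applied to consecutive pairs $(\mu_s,\mu_t)$.

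The principal obstacle is the cut locus: the first variation formula above requires $y_0\notin\mathrm{Cut}(x_0)$, and this must hold for $\mu$-almost every $x_0$ on $\supp\pi$. This is resolved by a super-differentiability argument: $-\varphi$ is $d^2/2$-concave and hence super-differentiable everywhere, and at a differentiability point $x_0$ of $\varphi$ its super-differential collapses to a single vector. Since the super-differential of the semi-concave function $d^2(\cdot,y_0)/2$ at $x_0$ is the closed convex hull of $\{-J(x_0,\dot\gamma(0))\}$ over all minimizing geodesics $\gamma$ from $x_0$ to $y_0$, singleness of the super-differential forces a unique minimizing geodesic and thus $y_0\notin\mathrm{Cut}(x_0)$. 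A secondary subtlety is the non-reversibility of $F$, which must be handled carefully when taking derivatives in $x$ (since $-\Nabla u\ne\Nabla(-u)$ in general); this causes no issue in the computation above because we work consistently with the forward distance $d(x,y)$ and the gradient $\Nabla$.
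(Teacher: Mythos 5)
This theorem is not proved in the paper: it is quoted as the ``Brenier--McCann characterization'' with a pointer to \cite{Oint}, so there is no internal argument to compare against. Your proof follows exactly the route that reference (and McCann's Riemannian original) takes -- Kantorovich duality for the cost $d^2/2$, a.e.\ differentiability of the semi-concave potential, identification of the optimal map through the first variation formula and the Legendre duality $J^*=J^{-1}$, exclusion of the cut locus by a super-/sub-differential pinching, and displacement interpolation for the geodesic claim -- so in that sense it is the expected argument and its architecture is sound.

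One step contains a genuine logical slip. In your cut-locus paragraph you conclude ``singleness of the super-differential forces a unique minimizing geodesic and thus $y_0\notin\mathrm{Cut}(x_0)$.'' Uniqueness of the minimizing geodesic from $x_0$ to $y_0$ does \emph{not} imply $y_0\notin\mathrm{Cut}(x_0)$: a cut point can be the first conjugate point along a unique minimizer. What your setup actually yields is stronger and should be used directly: since $\varphi(x)+d^2(x,y_0)/2\ge\psi(y_0)$ with equality at $x_0$ and $\varphi$ is differentiable there, the function $d^2(\cdot,y_0)/2$ has a sub-gradient at $x_0$, and semi-concavity gives it a super-gradient; hence it is \emph{differentiable} at $x_0$, and differentiability of $d^2(\cdot,y_0)$ at $x_0$ (not mere uniqueness of geodesics) is what is equivalent to $y_0\notin\mathrm{Cut}(x_0)$. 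Two further points are stated faster than they can be justified: (a) recovering $\varphi$ up to a single additive constant from $D\varphi$ being determined $\mu$-a.e.\ needs an argument when $\supp\mu$ is disconnected (otherwise uniqueness should be read as uniqueness of $\Nabla\varphi$ $\mu$-a.e., which is all that is used); (b) ``uniqueness of the geodesic from uniqueness of optimal couplings applied to consecutive pairs'' presupposes absolute continuity of the intermediate measures $\mu_t$, which you have not established -- the cleaner route is to lift an arbitrary Wasserstein geodesic to a dynamical optimal coupling and invoke the uniqueness of the static coupling together with the $\mu$-a.e.\ uniqueness of the minimal geodesic from $x$ to $T(x)$ already secured by the cut-locus step.
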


The next lemma is an analogue of the Riemannian one in \cite{Vi2}.

\begin{lemma}\label{lm:ccon}
There exists a positive constant $\varepsilon>0$ depending on $M$ such that,
if a $\mathcal{C}^2$-function $\varphi$ on $M$ satisfies
\begin{equation}\label{eq:ccon}
\sup_M |\varphi| <\varepsilon, \qquad \sup_M F\big( \Nabla (-\varphi) \big) <\varepsilon, \qquad
\frac{d^2}{dt^2} \Big|_{t=0} \big[ \varphi \circ \gamma(t) \big] \le \varepsilon
\end{equation}
along every unit speed geodesic $\gamma$, then $\varphi$ is $d^2/2$-concave.
\end{lemma}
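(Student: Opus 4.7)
The plan is to prove that $\varphi=(\varphi^c)^{\bar c}$, since $d^2/2$-concavity of $\varphi$ is equivalent to this identity. Because $\varphi\le(\varphi^c)^{\bar c}$ always holds, it suffices to produce, for each $x_0\in M$, a point $y_0\in M$ at which the function
\[
 h(x):=\frac{d^2(x,y_0)}{2}-\varphi(x)
\]
attains its global minimum at $x=x_0$; then $\varphi^c(y_0)=h(x_0)$ and consequently $(\varphi^c)^{\bar c}(x_0)\le\varphi(x_0)$. Guided by the Brenier--McCann description of the optimal transport generated by the $d^2/2$-convex function $-\varphi$, I would take
\[
\xi_0:=\Nabla(-\varphi)(x_0)\in T_{x_0}M,\qquad y_0:=\exp_{x_0}(\xi_0).
\]
By the first two hypotheses, $d(x_0,y_0)\le F(x_0,\xi_0)<\varepsilon$, which keeps $y_0$ inside the cut-locus-free neighborhood of $x_0$ once $\varepsilon$ is below a constant that depends only on $M$ (available because $M$ is compact and $F$ is smooth on $TM\setminus\{0\}$).

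Next I would verify the first-order condition. By construction and Lemma \ref{lm:g}(i), $J(x_0,\xi_0)=-D\varphi(x_0)$. For any $v\in T_{x_0}M$, the first variation of arc length applied to the family of minimizing geodesics from $\exp_{x_0}(tv)$ to the fixed point $y_0$ gives
\[
 \frac{d}{dt}\bigg|_{t=0}\frac{d^2(\exp_{x_0}(tv),y_0)}{2}=-g_{\xi_0}(\xi_0,v)=-J(x_0,\xi_0)(v)=D\varphi(x_0)(v),
\]
so the first variation of $h$ vanishes at $x_0$. For the second order, a Jacobi-field expansion of $d^2(\cdot,y_0)/2$ in exponential coordinates at $x_0$ produces a constant $C_1$ depending only on $M$ with
\[
 \frac{d^2}{dt^2}\bigg|_{t=0}\frac{d^2(\exp_{x_0}(tv),y_0)}{2}\ge 1-C_1\varepsilon
\]
for every unit vector $v$. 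Combined with the third hypothesis of the lemma, this yields $(d^2/dt^2)|_{t=0}h(\exp_{x_0}(tv))\ge 1-(C_1+1)\varepsilon$, which is strictly positive for $\varepsilon$ sufficiently small. Taylor's theorem then furnishes a radius $\delta>0$, uniform in $x_0$, on which $h(x)-h(x_0)\ge\tfrac14 d(x_0,x)^2$, so $x_0$ is a strict local minimum inside the forward ball $B^+(x_0,\delta)$.

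To rule out competing minima away from $x_0$, observe $h(x_0)\le\varepsilon^2/2+\varepsilon$, whereas for $x\in M$ with $d(x_0,x)\ge\delta$ the triangle inequality together with the local equivalence of $d$ and $\overleftarrow{d}$ (furnished by the uniform ellipticity $(\ref{eq:F-str2})$) gives $d(x,y_0)\ge\delta-C_2\varepsilon$ and hence
\[
 h(x)\ge \frac{(\delta-C_2\varepsilon)^2}{2}-\varepsilon.
\]
Choosing $\varepsilon$ small enough relative to $\delta$ makes the latter strictly exceed $h(x_0)$, so $x_0$ is the unique global minimizer of $h$, completing the proof of the identity $\varphi=(\varphi^c)^{\bar c}$.

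The principal obstacle is the uniform second-order lower bound on the squared distance: the function $d^2(\cdot,y_0)/2$ is only $\mathcal{C}^1$ at $y_0$, and its Hessian picks up an $O(d(x_0,y_0))$ correction coming from the Chern connection and flag curvature of $F$. Getting the constant $C_1$ independent of $x_0\in M$ requires smoothness of $F$ on $TM\setminus\{0\}$ and compactness of $M$; once this estimate is in hand, the remaining steps amount to the first-order cancellation $J(x_0,\xi_0)=-D\varphi(x_0)$ and elementary size comparisons controlled by the ellipticity constants $\kappa_M,\kappa_M^*$.
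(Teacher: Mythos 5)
Your proposal follows essentially the same route as the paper's proof: for each $x_0$ you take $y_0=\exp_{x_0}\big(\Nabla(-\varphi)(x_0)\big)$ and show that $x_0$ globally minimizes $h=d^2(\cdot,y_0)/2-\varphi$, using the first-order cancellation $Dh(x_0)=0$ together with a uniform positive lower bound on the second derivative of the squared distance along unit-speed geodesics near $y_0$ and a crude value comparison outside a fixed ball. The only caveats are cosmetic: the Hessian lower bound for $d^2(\cdot,y_0)/2$ should be a constant $c\in(0,1]$ determined by the convexity constants of $F$ (not $1-C_1\varepsilon$, which already fails for a non-Euclidean Minkowski norm, where the second derivative along an $F$-unit-speed line lies only between $\kappa^*$ and $1/\kappa$), and the ``Taylor'' step must use this bound together with your third hypothesis at \emph{all} points of the ball --- i.e.\ strict convexity of $h$ there, exactly as in the paper --- rather than only at $x_0$; neither point affects the argument, since only positivity of the bound is used.
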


\begin{proof}
Thanks to the compactness of $M$, there are constants $c,\delta>0$ such that
\[ \frac{d^2}{dt^2} \Big|_{t=0} \bigg[ \frac{1}{2}d^2\big( \gamma(t),y \big) \bigg] \ge c \]
holds for any $y \in M$ and unit speed geodesic $\gamma$ with $d(\gamma(0),y) \le \delta$
(see \cite[Remark 15.1.4]{Shlec} or \cite{Ouni}).
(To be precise, the above inequality holds in the weak sense if $\gamma(0)=y$.)
In particular, the backward open ball $B^-(y,\delta)$ is convex for any $y \in M$.
It costs no generality to assume $\delta \le 4$.
We put $\varepsilon:=\min\{ \delta^2/4,c/2 \}$ and suppose that a $\mathcal{C}^2$-function $\varphi$
satisfies the condition $(\ref{eq:ccon})$ for this $\varepsilon$.

For each $y \in M$, consider the function $f_y(x):=d^2(x,y)/2 -\varphi(x)$.
By construction, $f_y$ is strictly convex ($(d^2/dt^2)|_{t=0}(f_y \circ \gamma) \ge c/2$
along any unit speed geodesic $\gamma$ with $d(\gamma(0),y) \le \delta$).
Given $x \not\in B^-(y,\delta)$, we observe $f_y(x) \ge \delta^2/4 >\varphi(y) =f_y(y)$.
Hence $f_y$ attains its minimum at a unique point in $B^-(y,\delta)$.

Fix arbitrary $x \in M$ and put $y=\exp_x(\Nabla(-\varphi)(x))$.
Note that $d(x,y)<\varepsilon \le \delta$ by assumption.
Then we have $D(-d^2(\cdot,y)/2)(x)=D(-\varphi)(x)$ and hence $Df_y(x)=0$.
This implies that $x$ is the unique minimizing point of $f_y$, so that $\varphi^c(y)=f_y(x)=d^2(x,y)/2-\varphi(x)$.
Therefore we find $(\varphi^c)^{\bar{c}}(x) \le d^2(x,y)/2 -\varphi^c(y) =\varphi(x)$.
As the reverse inequality is always true, we obtain $(\varphi^c)^{\bar{c}}(x) =\varphi(x)$ for all $x \in M$,
which shows that $\varphi$ is $d^2/2$-concave.
\end{proof}

In particular, for fixed $\mu \in \mathcal{P}_{\mathrm{ac}}(M)$ and any $\mathcal{C}^2$-function $\varphi$,
the map $T(x):=\exp_x(\Nabla (-c\varphi)(x))$ is the unique optimal transport from $\mu$ to $T_{\sharp}\mu$
provided $c>0$ is sufficiently small.
Thus we arrive at the following notion of tangent and cotangent spaces.

\begin{defn}\label{df:Wtan} \rm
For each $\mu \in \mathcal{P}(M)$, we define
\begin{align*}
T_{\mu} \mathcal{P} &:= \overline{\{ \Phi=\Nabla \varphi \,:\, \varphi \in \mathcal{C}^{\infty}(M) \}}^{F_W(\mu,\cdot)}, \\
 T^*_{\mu} \mathcal{P} &:= \overline{\{ \alpha=D\varphi \,:\, \varphi \in \mathcal{C}^{\infty}(M) \}}^{F_W^*(\mu,\cdot)},
\end{align*}
where the closures are taken with respect to the Finsler structures (Minkowski norms) depending on $\mu$:
\begin{align*}
F_W(\mu,\Phi) &:=\left(\int_M F^2\big( x,\Phi(x) \big) \,\mu(dx) \right)^{1/2}, \\
F^*_W(\mu,\alpha) &:=\left(\int_M {F^*}^2\big( x,\alpha(x) \big) \,\mu(dx) \right)^{1/2}.
\end{align*}
\end{defn}
Note that here the completion may be equally understood as forward completion or backward completion.
Indeed, since by assumption (1.2) (or (1.3)) the norms $F(x,\cdot)$ and $\overleftarrow{F}(x,\cdot)$
are locally equivalent and we are now in a compact setting,
convergence of $\Phi_n$ to $\Phi$ in the sense of
$F_W(\mu,\Phi_n-\Phi)\to0$ is equivalent to convergence in the sense of $F_W(\mu,\Phi-\Phi_n)\to0$.
Similarly, elements of $T_{\mu}\mathcal{P}$ consist of equivalence classes of vector fields
$\Phi_1, \Phi_2$ with $F_W(\mu,\Phi_1-\Phi_2)=0$ or equivalently with $F_W(\mu,\Phi_2-\Phi_1)=0$.

Let us remark that $F_W$ and $F_W^*$ are dual to each other if we define a pairing between
$T^*_{\mu} \mathcal{P}$ and $T_{\mu} \mathcal{P}$ by
\[ \langle \alpha,\Phi \rangle_{\mu} :=\int_M \langle \alpha(x),\Phi(x) \rangle_x \,\mu(dx), \]
where $\langle \cdot,\cdot \rangle_x$ denotes the natural pairing between $T^*_xM$ and $T_xM$.
The Legendre transform $J^*_W(\mu,\cdot): T^*_{\mu} \mathcal{P} \rightarrow T_{\mu} \mathcal{P}$ is defined by
\[ \alpha=\big( x \mapsto \alpha(x) \big) \  \mapsto \ J^*_W(\mu,\alpha)=\big( x \mapsto J^*(x,\alpha(x)) \big). \]
Similarly to $J^*$, $J^*_W(\mu,\alpha)$ is the maximizer of the function
\[ \Phi \ \mapsto \ \langle \alpha,\Phi \rangle_{\mu} -\frac{1}{2} F_W^2(\mu,\Phi) -\frac{1}{2}F_W^{*2}(\mu,\alpha) \]
and $F_W(\mu,J_W^*(\mu,\alpha)) =F_W^*(\mu,\alpha)$.

\bigskip

Recall that the {\it relative entropy} $\mathrm{Ent}(\mu)$ of $\mu \in \mathcal{P}(M)$
is defined by
\[ \mathrm{Ent}(\mu):=\int_M \rho \log \rho \,dm \ \in \ (-\infty,\infty] \]
if $\mu=\rho m \in \mathcal{P}_{\mathrm{ac}}(M)$, and by $\mathrm{Ent}(\mu):=\infty$ otherwise.
According to \cite{StI} and \cite{LV1}, we say that $(M,F,m)$ satisfies the {\it curvature-dimension condition} $\mathsf{CD}(K,\infty)$
for some $K \in \R$ if the relative entropy is $K$-convex in the sense that
any $\mu,\nu \in \mathcal{P}(M)$ admit a minimal geodesic $(\mu_t)_{t \in [0,1]}$ from $\mu$ to $\nu$
such that
\[ \mathrm{Ent}(\mu_t) \le (1-t)\mathrm{Ent}(\mu) +t\mathrm{Ent}(\nu) -\frac{K}{2}(1-t)td_W(\mu,\nu) \]
holds for all $t \in [0,1]$.
A similar, but more involved convexity property is used to define the {\it curvature-dimension condition}
$\mathsf{CD}(K,N)$ for arbitrary real numbers $N\ge1$.

\begin{thm}[$N$-$\mathrm{Ric} \ge K$ equals $\mathsf{CD}(K,N)$, \cite{Oint}]\label{th:CD}
For a compact, smooth Finsler space $(M,F,m)$, the bound $\infty$-$\mathrm{Ric} \ge K$ in the sense of Definition $\ref{df:NRic}$
is equivalent to $\mathsf{CD}(K,\infty)$.
More generally, $N$-$\mathrm{Ric} \ge K$ in the sense of Definition $\ref{df:NRic}$
is equivalent to $\mathsf{CD}(K,N)$ in the sense of Lott and Villani {\rm \cite{LV2}} and Sturm {\rm \cite{StII}}.
\end{thm}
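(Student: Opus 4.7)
The result is attributed to \cite{Oint}, so my plan is to outline how the equivalence is established by reducing, along optimal transport geodesics, to the weighted Riemannian setting where $\mathsf{CD}(K,N)$ is already known to be equivalent to the generalized Bakry-\'Emery Ricci bound. The forward direction ($N$-$\mathrm{Ric}\ge K \Rightarrow \mathsf{CD}(K,N)$) is the harder one and proceeds by displacement interpolation; the converse is a localization/linearization argument.

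First I would fix $\mu_0,\mu_1 \in \mathcal{P}_{\mathrm{ac}}(M)$ and invoke the Brenier-McCann theorem stated just before this one: the unique optimal transport is $T_t(x)=\exp_x(t\Nabla\varphi(x))$ for some $d^2/2$-convex $\varphi$, and the unique $d_W$-geodesic is $\mu_t=(T_t)_\sharp \mu_0$. Along each transport geodesic $\gamma_x(t):=T_t(x)$ the velocity $Z(\gamma_x(t)):=\dot\gamma_x(t)$ is a smooth unit-speed vector field (on the open set where the transport is noncrossing), which by (\ref{eq:gxi}) induces a Riemannian metric $g_Z$ and an associated weighted Riemannian volume $m_Z=e^{-W_Z}\,dx$. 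The crucial observation, emphasized in the discussion preceding Definition \ref{df:NRic}, is that the quantities $\mathrm{Ric}(\dot\gamma_x)$, $\partial_{\dot\gamma_x}V_{\dot\gamma_x}$ and $\partial^2_{\dot\gamma_x}V_{\dot\gamma_x}$ that enter $\mathrm{Ric}_N$ depend only on $\dot\gamma_x$, not on the choice of extending vector field. Thus along $\gamma_x$ the Finsler curvature datum coincides with the weighted Riemannian curvature datum of $(M,g_Z,m)$.

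The second step is the Jacobian/Monge-Amp\`ere computation: writing $\rho_t$ for the density of $\mu_t$ with respect to $m$, one has $\rho_t(T_t(x))\,\mathcal{J}_t(x)=\rho_0(x)$ where $\mathcal{J}_t$ is the Jacobian of $T_t$ measured against $m$. Decomposing $\mathcal{J}_t=J_t^{(Z)}\cdot e^{W_Z(\gamma_x(t))-W_Z(x)}\cdot e^{V(x)-V(\gamma_x(t))}$ (Riemannian volume Jacobian times weight correction), the standard weighted Riemannian Jacobi-field computation for $-\log J_t^{(Z)}$ along $\gamma_x$ yields an ODE whose right-hand side is precisely bounded below, under $N$-$\mathrm{Ric}\ge K$, by the reference ODE corresponding to the $\mathsf{CD}(K,N)$ distortion coefficients $\tau_{K,N}^{(t)}$. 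Integrating against $\mu_0$ and using Jensen's inequality as in Sturm \cite{StII} and Lott-Villani \cite{LV2} produces the displacement convexity inequality defining $\mathsf{CD}(K,N)$. The $N=\infty$ case is the linearized version of this argument.

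For the converse, one tests $\mathsf{CD}(K,N)$ on Dirac-like approximations: fix $x\in M$ and a unit $\xi\in T_xM$, take $\mu_0,\mu_1$ concentrated in small metric balls around $x$ and $\exp_x(\varepsilon\xi)$ respectively with densities chosen so that the optimal displacement is (up to higher order) translation by $\varepsilon\xi$, and extract the second-order Taylor expansion of $\mathrm{Ent}$ along the geodesic; the leading $\varepsilon^2$-coefficient produces $\mathrm{Ric}_N(\xi)\ge K$. The principal obstacle is the forward direction: one must verify that the Finsler Jacobi equation along $\gamma_x$, expressed in a $g_Z$-parallel frame, reduces to the weighted Riemannian Jacobi equation with the correct curvature and weight terms, and that the non-crossing/a.e.-regularity of optimal transport (the set where $\varphi$ is twice differentiable has full $\mu_0$-measure) is sufficient to make the integrated inequality rigorous. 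This regularity theory for $d^2/2$-convex functions on Finsler manifolds, developed in \cite{Ouni}, is precisely what makes the whole argument go through in the Finsler category exactly as in the Riemannian one.
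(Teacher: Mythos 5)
The paper itself offers no proof of this theorem: it is quoted wholesale from \cite{Oint}, so there is no in-paper argument for your proposal to diverge from. Your outline accurately reproduces the strategy of that reference --- displacement interpolation along $T_t(x)=\exp_x(t\Nabla\varphi(x))$, the Jacobian (Monge--Amp\`ere) equation, reduction of the curvature term to a weighted Riemannian computation for the metric $g_Z$ induced by the transport vector field, and a localization argument for the converse. Note only that what you have written is a roadmap rather than a proof: the two genuinely hard ingredients, namely the Finsler Bishop-type concavity estimate for the Jacobian along transport rays and the almost-everywhere second differentiability of $d^2/2$-convex functions (from \cite{Ouni}), are precisely the content of the cited work and remain black boxes in your sketch; moreover the transport geodesics are constant-speed but not unit-speed, so the field $Z$ must be normalized before the bound $\mathrm{Ric}_N(\xi)\ge K$, stated for unit vectors, can be applied.
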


\medskip

We recall one striking application.

\begin{thm}[Lichnerowicz Inequality, \cite{Oint}]\label{th:Lich}
Let $(M,F,m)$ be a compact smooth  Finsler space satisfying the bound $N$-$\mathrm{Ric} \ge K$
for some $K>0$ and $N \in [n,\infty]$.
Then for any Lipschitz continuous function $u:M \rightarrow \R$ with $\int_M u \,dm=0$, we have
\[ \int_M u^2 \,dm \le \frac{N-1}{KN} \int_M F(\Nabla u)^2 \,dm. \]
In other words, with notations from $(\ref{eq:nuM})$,
\[ \overline\chi_M\ge K\frac N{N-1}. \]
In the case $N=\infty$, the constant on the RHS should be understood as $K$.
\end{thm}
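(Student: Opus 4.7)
My plan is to derive the inequality by an integrated Bochner argument applied to an extremal of the Rayleigh quotient, working in the weighted Riemannian structure $g^{(u)}$ of $(\ref{g-u})$ and paralleling Bakry--Émery's proof of the classical Riemannian Lichnerowicz inequality; it reuses the same machinery already deployed in the proof of Theorem \ref{th:Lcom}.

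First I would reduce to an eigenvalue estimate. By scaling, the statement is equivalent to the lower bound $\overline\chi_M \ge KN/(N-1)$ via the characterization $(\ref{eq:nuM})$. Compactness of $M$ together with Lemma \ref{lm:E}(ii)--(iii) guarantees that the infimum in $(\ref{eq:nuM})$ is positive and attained by some $u \in H^1(M)$ with $\|u\|_{L^2}=1$ and $\int_M u\,dm = 0$; set $\lambda := \overline\chi_M > 0$. A first-variation computation in the spirit of Lemma \ref{lm:wharm} (using Lemma \ref{lm:g}(ii)) then shows that $u$ satisfies $\bm{\Delta} u = -\lambda u$ in the weak sense. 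The elliptic analogue of Theorem \ref{thm:holder-reg} upgrades $u$ to $\mathcal{C}^{1,\alpha}(M)$, and on the open set $M' := \{x \in M : \Nabla u(x) \ne 0\}$ Schauder bootstrap for the linear operator $\Delta^{(u)}$ (whose coefficients $g^{*(u)}$ are smooth away from the zero section) yields $u \in \mathcal{C}^\infty(M')$.

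Next I would apply the weighted Bochner inequality along $Z = \Nabla u$. By Lemma \ref{lm:Lap} one has $\Delta^{(u)} u = \bm{\Delta} u = -\lambda u$ and $\Gamma^{(u)}(u,u) = g^{*(u)}(Du,Du) = F^{*2}(Du) = F^2(\Nabla u)$. The assumption $N$-$\mathrm{Ric} \ge K$ for $(M,F,m)$ translates, in the direction $\Nabla u$, into the Bakry--Émery $\Gamma_2$-inequality for $(M', g^{(u)}, m)$ applied to $u$ itself (exactly the reduction performed in the proof of Theorem \ref{th:Lcom}):
$$
\Gamma_2^{(u)}(u,u) \;\ge\; \frac{1}{N}\bigl(\Delta^{(u)} u\bigr)^{\!2} \,+\, K\,\Gamma^{(u)}(u,u) \;=\; \frac{\lambda^2}{N}\,u^2 \,+\, K\,F^2(\Nabla u),
$$
pointwise on $M'$. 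Two applications of divergence on the compact manifold $M$ supply the standard identities
$$
\int_M \Gamma_2^{(u)}(u,u)\,dm \;=\; -\!\int_M \Gamma^{(u)}\!\bigl(u, \Delta^{(u)} u\bigr)\,dm \;=\; \int_M (\bm{\Delta} u)^2\,dm \;=\; \lambda^2,
$$
and $\int_M F^2(\Nabla u)\,dm = -\int_M u\,\bm{\Delta} u\,dm = \lambda$. Integrating the pointwise Bochner inequality over $M$ therefore yields
$\lambda^2 \ge \lambda^2/N + K\lambda$; dividing by $\lambda > 0$ gives $\lambda \ge KN/(N-1)$, and the case $N=\infty$ is obtained by passing to the limit (giving $\lambda \ge K$).

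The main obstacle is that $g^{(u)}$ is defined only on $M'$ and degenerates on the zero set $\{\Nabla u = 0\}$, so both the pointwise Bochner inequality and the two integrations by parts have to be justified across this set. This is the usual technical difficulty in Finsler Bochner calculus; I would handle it by regularizing, replacing $\Nabla u$ by $\Nabla u + \varepsilon Y$ for a fixed smooth nonvanishing vector field $Y$ so that the frozen metric is uniformly Riemannian on all of $M$, deriving the inequality in that setting, and then passing to the limit $\varepsilon \downarrow 0$ using the $\mathcal{C}^{1,\alpha}$-regularity of $u$, the $H^2_{\mathrm{loc}}$-regularity of Theorem \ref{thm:H2-reg}, and dominated convergence — the two sides of the final scalar inequality being controlled by $\lambda^2 u^2$ and $F^2(\Nabla u)$, both unambiguously defined and integrable.
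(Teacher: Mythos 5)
The paper itself offers no proof of this theorem: it is quoted from \cite{Oint}, where it is derived from the curvature-dimension condition $\mathsf{CD}(K,N)$ (equivalent to $N$-$\mathrm{Ric}\ge K$ by Theorem \ref{th:CD}) via displacement convexity of the entropy along Wasserstein geodesics, in the spirit of Lott--Villani. Your route --- extremal eigenfunction plus integrated Bochner inequality in the frozen Riemannian metric $g^{(u)}$ --- is therefore a genuinely different strategy, and its outer skeleton is sound: existence of a minimizer of $(\ref{eq:nuM})$, the weak Euler--Lagrange equation $\bm{\Delta}u=-\lambda u$, the identities $\int_M F^2(\Nabla u)\,dm=\lambda$ and $\int_M(\bm{\Delta}u)^2\,dm=\lambda^2$, and the final arithmetic $\lambda^2\ge\lambda^2/N+K\lambda$ are all fine.

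The gap is in the central step, the transfer of $N$-$\mathrm{Ric}\ge K$ into the $\Gamma_2$-inequality for $(M',g^{(u)},m)$. You assert this is ``exactly the reduction performed in the proof of Theorem \ref{th:Lcom}'', but it is not. In Definition \ref{df:NRic} the quantities $\mathrm{Ric}(\xi)$, $\partial_\xi V_\xi$, $\partial^2_\xi V_\xi$ are computed from the Riemannian metric $g_Z$ of a vector field $Z$ \emph{whose integral curves are geodesics}; only for such $Z$ do the flag and Ricci curvatures of $g_Z$ reproduce the Finsler curvatures (this is the content of Shen's Proposition 6.2.2 invoked in Chapter \ref{sc:Ricci}). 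In Theorem \ref{th:Lcom} the field $Z=\Nabla d(z,\cdot)$ is a geodesic field, so the curvature hypothesis genuinely descends to the weighted Riemannian space $(M_z,g_Z,m)$ and the Bakry--\'Emery/Bakry--Qian machinery applies. For an eigenfunction $u$ the integral curves of $\Nabla u$ are not geodesics, and $\mathrm{Ric}_{g^{(u)}}(\Nabla u,\Nabla u)$ differs from $\mathrm{Ric}(\Nabla u)$ by correction terms (derivatives of the Cartan tensor along $\Nabla u$) which vanish precisely when the field is geodesic. Hence the pointwise inequality $\Gamma_2^{(u)}(u,u)\ge N^{-1}(\Delta^{(u)}u)^2+K\,\Gamma^{(u)}(u,u)$ does not follow from the hypothesis by the argument you give. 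A Bochner--Weitzenb\"ock formula for the Finsler Laplacian with the weighted Ricci curvature of Definition \ref{df:NRic} is in fact true and would validate your scheme, but it is a substantive theorem in its own right, requiring the Chern connection and a careful comparison of $\mathrm{Ric}_{g^{(u)}}$ with $\mathrm{Ric}$; it is not available from the material in this paper. The regularization issues at $\{\Nabla u=0\}$ that you flag are real but secondary to this.
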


\section{Heat Flow as Gradient Flow in the Wasserstein Space}\label{sc:Wgrad}

We continue our analysis of the Wasserstein space over a smooth, compact Finsler manifold.
Using the continuity equation $(\ref{eq:coeq})$ below, we will see that the heat flow with respect to the reverse Finsler
structure is regarded as the gradient flow of the relative entropy.
See \cite{JKO} for original work on Euclidean spaces and \cite{Ogra}, \cite{Sa} and \cite{Vi2}
for related work on various Riemannian spaces.

We first observe that the Wasserstein distance is actually interpreted as the distance associated with
the Finsler structure introduced in Definition \ref{df:Wtan}.
The next lemma is an analogue of \cite[Theorem 1.1.2]{AGS} with a slight modification
caused by the nonsymmetric distance.

\begin{lemma}\label{lm:metd}
For any locally Lipschitz continuous curve $(\mu_t)_{t \in I} \subset \mathcal{P}(M)$ on an open interval
$I \subset \R$, the $($forward$)$ metric derivative
\[ |\dot{\mu}_t| := \lim_{s \to t} \frac{d_W(\mu_{\min\{ s,t \}},\mu_{\max\{ s,t \}})}{|t-s|} \]
exists at a.e.\ $t \in I$.
Moreover, $|\dot{\mu}| \in L^{\infty}_{\mathrm{loc}}(I)$ and
$d_W(\mu_s,\mu_t) \le \int_s^t |\dot{\mu}_{\tau}| \,d\tau$ holds for all $s,t \in I$ with $s<t$.
\end{lemma}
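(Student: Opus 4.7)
The plan is to adapt the Ambrosio--Gigli--Savar\'e scheme (Theorem 1.1.2 of \cite{AGS}) for metric derivatives of absolutely continuous curves, being careful about the fact that $d_W$ is not symmetric. The first step is to exploit that $M$ is compact and $F$ is smooth (hence strongly convex), so $F$ and $\overleftarrow F$ are equivalent norms fiberwise, uniformly on $M$; consequently $d$ and $\overleftarrow d$ are bi-Lipschitz equivalent on $M\times M$, and the same then holds for $d_W$ and its reverse on $\mathcal{P}(M)\times\mathcal{P}(M)$. Together with the local Lipschitz assumption, this yields, on every compact subinterval $J\subset I$, a constant $L_J$ with
\[ d_W(\mu_s,\mu_t)+d_W(\mu_t,\mu_s)\le L_J|t-s| \qquad (s,t\in J). \]

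Next I would fix a countable $d_W$-dense subset $\{\nu_n\}_{n\in\N}\subset\mathcal{P}(M)$ (using separability of $(\mathcal{P}(M),d_W)$, which follows from compactness of $M$) and set
\[ \phi_n(t):=d_W(\nu_n,\mu_t), \qquad \psi_n(t):=d_W(\mu_t,\nu_n). \]
The asymmetric triangle inequality in either orientation gives $|\phi_n(s)-\phi_n(t)|,\ |\psi_n(s)-\psi_n(t)|\le\max\{d_W(\mu_s,\mu_t),d_W(\mu_t,\mu_s)\}$, so by the previous paragraph both $\phi_n$ and $\psi_n$ are $L_J$-Lipschitz on $J$. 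Classical Rademacher on $\R$ then provides derivatives $\phi_n'(t),\psi_n'(t)$ for a.e.\ $t\in I$, and on the full-measure set where all of them exist I would put
\[ m(t):=\sup_{n\in\N}\max\{|\phi_n'(t)|,|\psi_n'(t)|\}, \]
which is Borel and satisfies $m\le L_J$ on $J$, so $m\in L^\infty_{\mathrm{loc}}(I)$.

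The core of the argument is to show $d_W(\mu_s,\mu_t)\le\int_s^t m(\tau)\,d\tau$ for $s<t$ in $I$. Choose, by density, a subsequence $\nu_{n_k}$ with $d_W(\nu_{n_k},\mu_t)\to 0$ and (by the equivalence of the two completions on compact $M$) also $d_W(\mu_t,\nu_{n_k})\to 0$. Then $\psi_{n_k}(t)\to 0$ and, by continuity of $\eta\mapsto d_W(\mu_s,\eta)$, $\psi_{n_k}(s)\to d_W(\mu_s,\mu_t)$, so that
\[ d_W(\mu_s,\mu_t)=\lim_{k}\bigl(\psi_{n_k}(s)-\psi_{n_k}(t)\bigr)\le\liminf_k\int_s^t|\psi_{n_k}'(\tau)|\,d\tau\le\int_s^t m(\tau)\,d\tau. \]
This integral bound immediately yields $\limsup_{s\to t} d_W(\mu_{\min\{s,t\}},\mu_{\max\{s,t\}})/|t-s|\le m(t)$ at every Lebesgue point of $m$. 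For the matching lower bound I would use, at any such $t$ where additionally every $\phi_n',\psi_n'$ exists, the pointwise inequality $|\phi_n(s)-\phi_n(t)|,|\psi_n(s)-\psi_n(t)|\le d_W(\mu_{\min\{s,t\}},\mu_{\max\{s,t\}})$ to obtain $|\phi_n'(t)|,|\psi_n'(t)|\le\liminf_{s\to t}d_W(\mu_{\min\{s,t\}},\mu_{\max\{s,t\}})/|t-s|$, and then take the supremum over $n$. The forward metric derivative $|\dot\mu_t|$ therefore exists and equals $m(t)$ almost everywhere, and the integral estimate is exactly the last conclusion of the lemma.

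The principal obstacle is precisely the asymmetry of $d_W$: a single family $\phi_n$ controls $d_W(\mu_{s},\mu_{t})$ only from one side, so I must work with both $\phi_n$ and $\psi_n$ simultaneously and verify that the density approximation of $\mu_t$ can be realised in both orientations of $d_W$, which is what the global equivalence $d\asymp\overleftarrow d$ on the compact $M$ delivers.
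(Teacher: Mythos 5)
Your overall strategy is the paper's: distance functions to a countable dense set, Rademacher in one variable, an integral bound via approximation, and matching upper and lower bounds at Lebesgue points. The setup (Lipschitz continuity of $\phi_n,\psi_n$ in both orientations via $d\asymp\overleftarrow d$ on compact $M$) and the integral estimate $d_W(\mu_s,\mu_t)\le\int_s^t m\,d\tau$ are correct. However, the lower-bound step contains a genuine error. You invoke the pointwise inequality $|\phi_n(s)-\phi_n(t)|,\ |\psi_n(s)-\psi_n(t)|\le d_W(\mu_{\min\{s,t\}},\mu_{\max\{s,t\}})$, but for a nonsymmetric distance only one sign of each difference is controlled by the \emph{forward} Wasserstein distance: for $s<t$ the triangle inequality gives $\phi_n(t)-\phi_n(s)\le d_W(\mu_s,\mu_t)$ and $\psi_n(s)-\psi_n(t)\le d_W(\mu_s,\mu_t)$, whereas $\phi_n(s)-\phi_n(t)$ and $\psi_n(t)-\psi_n(s)$ are bounded only by the \emph{reverse} distance $d_W(\mu_t,\mu_s)$, which is merely comparable to (not dominated by) the forward one. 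Consequently you only obtain $\liminf_{s\to t}d_W(\mu_{\min},\mu_{\max})/|t-s|\ \ge\ \sup_n\max\{\phi_n'(t),-\psi_n'(t)\}$, which may be strictly smaller than your $m(t)=\sup_n\max\{|\phi_n'(t)|,|\psi_n'(t)|\}$; since the upper bound reads $\limsup\le m(t)$, the two bounds do not meet and the existence of the limit is not established.

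The fix is to drop the absolute values: set $m(t):=\sup_n\max\{\phi_n'(t),-\psi_n'(t)\}$ (signed derivatives). The integral bound survives, because $\psi_{n_k}(s)-\psi_{n_k}(t)=\int_s^t\bigl(-\psi_{n_k}'(\tau)\bigr)\,d\tau\le\int_s^t m(\tau)\,d\tau$, and now the lower bound matches the upper bound at Lebesgue points, completing the proof. This is exactly what the paper does, using only the single family $d_n(t)=d_W(\nu_n,\mu_t)$ with $D(t)=\sup_n d_n'(t)$ (no absolute value), a dense set $\{\nu_n\}$ taken inside the image of the curve, and approximation of $\mu_s$ (rather than of $\mu_t$, as you do with $\psi_n$) to get $d_W(\mu_s,\mu_t)=\sup_n\{d_n(t)-d_n(s)\}$. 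Your two-family variant is fine and arguably more transparent about where each orientation of the triangle inequality is used, but only once the absolute values are removed.
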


\begin{proof}
Take a countable dense set $\{ \nu_n \} \subset \{ \mu_t \,:\, t \in I \} \subset \mathcal{P}(M)$
and define the function $d_n(t):=d_W(\nu_n,\mu_t)$.
Note that $d_n$ is locally Lipschitz continuous uniformly in $n$, so that the function
$D(t):=\sup_n d'_n(t)$ is well-defined a.e.\ on $I$ and $D \in L^{\infty}_{\mathrm{loc}}(I)$.
It follows from the triangle inequality that
\[ \liminf_{s \uparrow t} \frac{d_W(\mu_s,\mu_t)}{t-s}
 \ge \sup_n \liminf_{s \uparrow t} \frac{d_n(t)-d_n(s)}{t-s} =D(t) \]
for a.e.\ $t \in I$.
Moreover, we deduce from the density of $\{ \nu_n \}$ that
\[ d_W(\mu_s,\mu_t) =\sup_n \{ d_n(t)-d_n(s) \} =\sup_n \int_s^t d'_n \,d\tau \le \int_s^t D \,d\tau. \]
Therefore we have
\[ \lim_{s \uparrow t} \frac{d_W(\mu_s,\mu_t)}{t-s} =D(t) \]
for a.e.\ $t \in I$.
We similarly obtain $\lim_{s \downarrow t} d_W(\mu_t,\mu_s)/(s-t) =D(t)$ for a.e.\ $t \in I$
and this completes the proof.
\end{proof}

\begin{lemma}\label{lm:tanv}
Let $I \subset \R$ be an open interval and $(\mu_t)_{t \in I} \subset \mathcal{P}(M)$ be
a locally Lipschitz continuous curve.
Suppose that a Borel vector field $\Phi(t,x) \in T_xM$ on $I \times M$
with $F(\Phi) \in L^2_{\mathrm{loc}}(I \times M, d\mu_tdt)$ satisfies the continuity equation
\[ \partial_t \mu_t +\mathrm{div}(\Phi_t \mu_t) =0 \]
in the weak sense that
\begin{equation}\label{eq:coeq}
\int_I \int_M \{ \partial_t \psi_t +D\psi_t(\Phi_t) \} \,d\mu_t \,dt =0
\end{equation}
for all $\psi \in \mathcal{C}^{\infty}_c(I \times M)$,
where $\Phi_t:=\Phi(t,\cdot)$ and $\psi_t:=\psi(t,\cdot)$.
Then we have $F_W(\mu_t,\Phi_t) \ge |\dot{\mu}_t|$ for a.e.\ $t \in I$.
\end{lemma}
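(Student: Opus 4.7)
The plan is to reduce the inequality to the key estimate
\[
d_W^2(\mu_s,\mu_t) \le (t-s)\int_s^t F_W^2(\mu_\tau,\Phi_\tau)\,d\tau \qquad\text{for } s<t \text{ in } I,
\]
from which the conclusion follows by dividing by $(t-s)^2$ and letting $s\uparrow t$ along Lebesgue points of $\tau\mapsto F_W^2(\mu_\tau,\Phi_\tau)$, using the existence of the metric derivative guaranteed by Lemma \ref{lm:metd}. To prove this key estimate I would use Kantorovich duality coupled with the Hopf--Lax semigroup; this bypasses building a flow of characteristics for the merely Borel field $\Phi$.

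The first step is to extend the continuity equation $(\ref{eq:coeq})$ to Lipschitz test functions depending on $(\tau,x)$. Testing $(\ref{eq:coeq})$ with tensor products $\eta(\tau)\varphi(x)$, one sees that for each $\varphi\in \mathcal{C}^\infty(M)$ the map $\tau\mapsto\int\varphi\,d\mu_\tau$ is absolutely continuous with derivative $\int D\varphi(\Phi_\tau)\,d\mu_\tau$; a mollification argument in the time variable then upgrades this to the identity
\[
\int u_t\,d\mu_t - \int u_s\,d\mu_s
= \int_s^t\int_M \bigl\{\partial_\tau u_\tau + Du_\tau(\Phi_\tau)\bigr\}\,d\mu_\tau\,d\tau
\]
valid for every $u$ Lipschitz in $(\tau,x)$.

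Next, given a Lipschitz $\psi:M\to\R$, I would take
\[
u(\tau,x) := Q_{\tau-s}\psi(x)
:= \inf_{y\in M}\Bigl\{\psi(y) + \frac{d(y,x)^2}{2(\tau-s)}\Bigr\},\qquad \tau>s,
\]
with $u(s,\cdot)=\psi$. Because the Hamiltonian dual to the Lagrangian $L(x,v)=\tfrac12 F^2(x,v)$ is $H(x,\alpha)=\tfrac12 F^{*2}(x,\alpha)$, standard Hopf--Lax theory on the compact Finsler manifold $M$ yields the Hamilton--Jacobi subsolution inequality $\partial_\tau u_\tau + \tfrac12 F^{*2}(x,Du_\tau)\le 0$ a.e., and a Lipschitz bound on $(\tau,x)\mapsto u(\tau,x)$. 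Inserting this $u$ into the extended identity above and using the Young inequality $F^*(Du_\tau)\cdot F(\Phi_\tau)\le \tfrac12 F^{*2}(Du_\tau)+\tfrac12 F^2(\Phi_\tau)$ gives
\[
\int Q_{t-s}\psi\,d\mu_t-\int\psi\,d\mu_s \le \tfrac12 \int_s^t F_W^2(\mu_\tau,\Phi_\tau)\,d\tau.
\]
Taking the supremum over Lipschitz $\psi$ and invoking Kantorovich duality for the rescaled cost $c(x,y)=d(x,y)^2/(2(t-s))$,
\[
\frac{d_W^2(\mu_s,\mu_t)}{2(t-s)} = \sup_{\psi}\Bigl\{\int Q_{t-s}\psi\,d\mu_t -\int \psi\,d\mu_s\Bigr\},
\]
produces the key estimate.

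The main technical obstacle is the chain-rule step: $Q_{\tau-s}\psi$ is only Lipschitz, not $\mathcal{C}^1$, in $\tau$, so the application of the extended continuity equation combined with the Hamilton--Jacobi inequality must be justified by mollifying in $\tau$, with compactness of $M$ and the $L^2_{\mathrm{loc}}$-integrability of $F(\Phi)$ providing the dominating functions needed to pass to the limit. Everything else (the duality formula, the Hopf--Lax inequality, and the final limit $s\uparrow t$) is classical once this technical point is secured.
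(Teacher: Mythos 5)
Your proposal is correct in outline but takes a genuinely different route from the paper. The paper's proof is Lagrangian: it invokes the superposition principle (\cite[Theorem 8.2.1]{AGS}) to lift $(\mu_\tau,\Phi_\tau)$ to a measure $\Pi$ on path space concentrated on integral curves of $\Phi$, applies the pointwise Cauchy--Schwarz bound $d^2(\gamma(s),\gamma(t))\le (t-s)\int_s^t F^2(\gamma,\dot\gamma)\,d\tau$ along each curve, and integrates against $\Pi$ to get exactly your key estimate $d_W^2(\mu_s,\mu_t)\le (t-s)\int_s^t F_W^2(\mu_\tau,\Phi_\tau)\,d\tau$; the final Lebesgue-point limit is the same in both arguments. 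Your route is Eulerian/dual: Hopf--Lax plus Kantorovich duality. What the paper's approach buys is brevity, at the cost of citing the (deep) superposition theorem and of implicitly transferring it from $\R^d$ to the compact Finsler manifold. What your approach buys is that it never constructs characteristics of the merely Borel field $\Phi$, and it adapts more readily to settings where no superposition principle is available. You have also correctly tracked the nonsymmetry of $d$: the Hopf--Lax infimum over the source variable $y$ in $d(y,x)^2$, the Hamiltonian $\tfrac12 F^{*2}(x,\cdot)$ (consistent with $F^*(x,D_xd(y,\cdot))=1$), and the one-sided pairing bound $Du(\Phi)\le F^*(Du)F(\Phi)$ all have the right orientation relative to the forward metric derivative of Lemma \ref{lm:metd}.

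One caution: the step you describe as "classical once this technical point is secured" understates the issue slightly. The chain-rule identity for a merely Lipschitz $u(\tau,x)$ requires $\partial_\tau u$ to exist $d\mu_\tau\,d\tau$-a.e., whereas Rademacher only gives existence $\mathcal{L}^1\otimes m$-a.e., and the measures $\mu_\tau$ need not be absolutely continuous with respect to $m$. Mollification in $\tau$ alone does not resolve this; the standard fix is to use the specific structure of the Hopf--Lax semigroup (for each fixed $x$, $\tau\mapsto Q_{\tau-s}\psi(x)$ is locally Lipschitz and satisfies the subsolution inequality at every $\tau$ outside a countable set, uniformly in $x$ on the compact $M$), combined with upper derivatives and Fatou. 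With that version of the argument in place, the rest of your proof goes through.
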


\begin{proof}
Fix $s,t \in I$ with $s<t$.
We denote by $\Gamma_{[s,t]}$ the set of absolutely continuous curves $\gamma:[s,t] \rightarrow M$
endowed with the uniform (supremum) topology, and define the evaluation map
$e_{\tau}:\Gamma_{[s,t]} \rightarrow M$ at $\tau \in [s,t]$ by $e_{\tau}(\gamma):=\gamma(\tau)$.
By virtue of \cite[Theorem 8.2.1]{AGS}, there exists a probability measure $\Pi \in \mathcal{P}(\Gamma_{[s,t]})$
such that $(e_{\tau})_{\sharp}\Pi =\mu_{\tau}$ for all $\tau \in [s,t]$ and that
$\Pi$ is concentrated on the set of curves $\gamma$ solving $\dot{\gamma}(\tau)=\Phi_{\tau}(\gamma(\tau))$
for a.e.\ $\tau \in [s,t]$.
Since
\[ d^2\big( \gamma(s),\gamma(t) \big) \le (t-s) \int_s^t F^2\big( \gamma(\tau),\dot{\gamma}(\tau) \big) \,d\tau
 =(t-s) \int_s^t F^2\big( \gamma(\tau),\Phi_{\tau}(\gamma(\tau)) \big) \,d\tau \]
holds for $\Pi$-a.e.\ $\gamma$, we see
\[ d_W(\mu_s,\mu_t) \le
 \bigg( \int_{\Gamma_{[s,t]}} d^2\big( \gamma(s),\gamma(t) \big) \,\Pi(d\gamma) \bigg)\!^{1/2}
 \le (t-s)^{1/2} \bigg( \int_s^t F^2_W(\mu_{\tau},\Phi_{\tau}) \,d\tau \bigg)\!^{1/2}. \]
Hence we have $|\dot{\mu}_t| \le F_W(\mu_t,\Phi_t)$ for a.e.\ $t \in I$.
\end{proof}

\begin{thm}\label{th:coeq}
Let $I \subset \R$ be an open interval and $(\mu_t)_{t \in I} \subset \mathcal{P}(M)$
be a locally Lipschitz continuous curve.
Then there exists a Borel vector field $\Phi(t,x) \in T_xM$ on $I \times M$ with
$F(\Phi) \in L^{\infty}_{\mathrm{loc}}(I \times M, d\mu_tdt)$ satisfying the continuity equation $(\ref{eq:coeq})$.
Moreover, such a vector field $\Phi$ is unique up to a difference on a null measure set with respet to $d\mu_t dt$
and satisfies $F_W(\mu_t,\Phi_t) =|\dot{\mu}_t|$ a.e.\ $t \in I$.
\end{thm}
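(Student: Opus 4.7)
The plan is to follow the Ambrosio--Gigli--Savar\'e scheme \cite{AGS} for the continuity equation, adapted to the Finsler setting by leveraging the tangent space $T_{\mu}\mathcal{P}$ from Definition \ref{df:Wtan}, the Legendre transform on the Wasserstein space, and the $2$-uniform convexity/smoothness estimates (\ref{eq:F-str4}), (\ref{eq:2uni}). Existence will be obtained by a discrete geodesic approximation and weak compactness, while the identification $F_W(\mu_t,\Phi_t)=|\dot\mu_t|$ and uniqueness will come from combining the lower bound of Lemma \ref{lm:tanv} with a strict-convexity argument.

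First I would construct a discrete approximation via optimal transport. For each $h>0$ and successive times $t$ and $t+h$ in a dyadic partition of $I$, apply Brenier--McCann (after a standard mollification of $\mu_t$ against a smooth kernel if necessary to ensure absolute continuity, then pass to the limit) to obtain a $d^2/2$-convex potential $\varphi^h_t$ with optimal map $T^h_t(x)=\exp_x(\Nabla\varphi^h_t(x))$, so that $\int F^2(\Nabla\varphi^h_t)\,d\mu_t=d_W^2(\mu_t,\mu_{t+h})$. Let $\tilde\mu^h_s$ be the piecewise geodesic interpolation and $\tilde\Phi^h_s = \Nabla\varphi^h_t / h$ on each subinterval $[t,t+h]$; this pair satisfies the continuity equation (\ref{eq:coeq}) (it is the classical Finsler geodesic flow, tested against smooth $\psi$ by differentiating $\int \psi_s\circ T^h_t\cdot(s-t)/h\,d\mu_t$). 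The local Lipschitz hypothesis together with $|\dot\mu|\in L^\infty_{\mathrm{loc}}(I)$ (Lemma \ref{lm:metd}) yields $\iint F^2(\tilde\Phi^h)\,d\tilde\mu^h_s\,ds \le \int|\dot\mu_s|^2\,ds +o(1)$ on compact subintervals.

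Next I would extract a limit. The vector-valued measures $\tilde\Phi^h\,d\tilde\mu^h_s\,ds$ are tight in the space of $TM$-valued measures of bounded total variation (using (\ref{eq:F-str3}) to pass between the Finsler norm and the Euclidean background metric) and converge weakly, along a subsequence, to some measure whose disintegration against $d\mu_s\,ds$ produces the desired Borel vector field $\Phi(s,x)$. Passing to the limit in (\ref{eq:coeq}) against smooth $\psi$ is immediate. Lower semicontinuity of the Finsler $L^2$-energy with respect to this weak convergence, a consequence of the convexity of $F^2(x,\cdot)$ guaranteed by (\ref{eq:F-str2}), gives $\iint F^2(\Phi)\,d\mu_s\,ds \le \liminf_h \iint F^2(\tilde\Phi^h)\,d\tilde\mu^h_s\,ds \le \int|\dot\mu_s|^2\,ds$. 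Combined with the opposite bound $F_W(\mu_s,\Phi_s)\ge |\dot\mu_s|$ from Lemma \ref{lm:tanv}, this forces equality a.e., and the $L^\infty_{\mathrm{loc}}$-bound on $F(\Phi)$ follows from $|\dot\mu|\in L^\infty_{\mathrm{loc}}$.

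For uniqueness, I would argue that the minimal velocity must lie in $T_{\mu_s}\mathcal{P}$: if the component of $\Phi_s$ orthogonal (in the sense of the duality pairing $\int D\varphi\cdot \Psi\,d\mu_s = 0$ for all $\varphi\in\mathcal{C}^\infty(M)$) were nontrivial, one could subtract it and obtain a strictly smaller $F_W$-norm, still solving the continuity equation, contradicting $F_W(\mu_s,\Phi_s)=|\dot\mu_s|$. If $\Phi^1,\Phi^2$ both achieve the minimum, then the midpoint $(\Phi^1+\Phi^2)/2$ also satisfies (\ref{eq:coeq}), and the strict $2$-uniform convexity (\ref{eq:2uni}) applied pointwise gives
\[
F_W^2\!\left(\mu_s,\tfrac{\Phi^1_s+\Phi^2_s}{2}\right)
\le \tfrac12 F_W^2(\mu_s,\Phi^1_s)+\tfrac12 F_W^2(\mu_s,\Phi^2_s)-\tfrac{\kappa^*_M}{4}F_W^2(\mu_s,\Phi^2_s-\Phi^1_s),
\]
forcing $\Phi^1=\Phi^2$ a.e.\ with respect to $d\mu_s\,ds$. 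The main obstacle I expect is the non-Hilbertian character of $F_W$: in the Riemannian case one projects orthogonally onto $T_{\mu_s}\mathcal{P}$, whereas here one must replace orthogonal projection by the Legendre transform $J^*_W$ and deploy the uniform convexity/smoothness constants $\kappa_M,\kappa^*_M$ together with a Hahn--Banach-style duality to verify both membership in $T_{\mu_s}\mathcal{P}$ and the uniqueness of the minimizer.
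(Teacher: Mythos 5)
Your route is genuinely different from the paper's. The paper argues entirely on the dual side: it defines the linear functional $\Psi(D\psi)=-\iint\partial_t\psi_t\,d\mu_t\,dt$ on the space $V$ of differentials of smooth test functions, proves the bound $\Psi(D\psi)\le F^*_V(D\psi)\,(\int|\dot\mu_t|^2dt)^{1/2}$ by difference quotients against optimal couplings, extends $\Psi$ to the closure $\overline V$, takes the unique maximizer $\alpha$ of $\Psi-\tfrac12F^{*2}_V$, and sets $\Phi_t=J^*_W(\mu_t,\alpha_t)$; the continuity equation is then the Euler--Lagrange identity for the maximizer, uniqueness is strict convexity of $F^{*2}_V$, and the identity $F_W(\mu_t,\Phi_t)=|\dot\mu_t|$ again comes from Lemma \ref{lm:tanv}. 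What the dual route buys is exactly the point you flag as your ``main obstacle'': since $\Phi=J^*_W(\mu,\alpha)$ with $\alpha\in\overline V$, membership of $\Phi_t$ in $T_{\mu_t}\mathcal{P}$ (needed for the definition of $\dot\mu_t$ that follows the theorem) is automatic, and no orthogonal projection or Hahn--Banach surrogate is ever required. Your primal route (discrete geodesic interpolation, compactness of the momentum measures, joint lower semicontinuity of the Benamou--Brenier energy, then squeezing against Lemma \ref{lm:tanv}) is the other standard strategy from \cite{AGS} and is more constructive; your uniqueness argument via the midpoint and the $2$-uniform convexity inequality $(\ref{eq:2uni})$ is clean and in fact more explicit than the paper's one-line appeal to strict convexity.

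Two points in your version need repair. First, on each subinterval the velocity field of the geodesic interpolation is not $\Nabla\varphi^h_t/h$ at all intermediate times but the \emph{transported} velocity $\tilde\Phi^h_s(y)=h^{-1}\dot\gamma_x(\tau)$ at $y=\gamma_x(\tau)$, $\tau=(s-t)/h$; only with this field does $(\ref{eq:coeq})$ hold, though constancy of geodesic speed preserves your energy bound. (Relatedly, mollifying to invoke Brenier--McCann is avoidable: use dynamical optimal plans and the barycentric projection of $\dot\gamma$, whose energy is controlled by Jensen applied to the convex function $F^2(x,\cdot)$.) Second, your ``subtract the orthogonal component'' step is a Hilbertian move with no direct Finsler analogue; as you yourself anticipate, establishing that the minimal-norm solution lies in $T_{\mu_s}\mathcal{P}$ of Definition \ref{df:Wtan} requires either the paper's dual construction or a Radon--Riesz-type argument (weak convergence of $\tilde\Phi^h\,d\tilde\mu^h\,dt$ plus convergence of the energies, upgraded to strong convergence using the uniform convexity constants). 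Your uniqueness proof, however, does not depend on that step: it already shows that any two continuity-equation solutions attaining the minimal norm $|\dot\mu_s|$ coincide $d\mu_s\,ds$-a.e., which is the uniqueness the theorem actually asserts. Finally, note that $F_W(\mu_t,\Phi_t)=|\dot\mu_t|\in L^\infty_{\mathrm{loc}}(I)$ gives an $L^2(\mu_t)$ bound in $x$, locally uniform in $t$, not a pointwise $L^\infty$ bound in $(t,x)$; this matches what the paper's own proof delivers.
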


\begin{proof}
Without loss of generality, we assume that $I=(0,1)$ and $(\mu_t)_{t \in I}$ is Lipschitz continuous.
We consider the functional $\Psi$ on the space $V:=\{ D\psi=(D\psi_t)_{t \in I} \,:\, \psi \in \mathcal{C}^{\infty}_c(I \times M) \}$
defined by
\[ \Psi(D\psi) :=-\int_I \int_M \partial_t \psi_t \,d\mu_t \,dt. \]
Clearly $\Psi$ is well-defined and linear.
We equip $V$ with the norm
\[ F^*_V(D\psi) := \bigg( \int_I \int_M F^{*2}\big( x,D\psi_t(x) \big) \,\mu_t(dx) \,dt \bigg)^{1/2}. \]

Given $D\psi \in V$, we see
\begin{align*}
\Psi(D\psi) &= \lim_{\varepsilon \downarrow 0} \int_I \int_M
 \frac{\psi(x,t-\varepsilon)-\psi(x,t)}{\varepsilon} \,\mu_t(dx) \,dt \\
&= \lim_{\varepsilon \downarrow 0} \int_I \frac{1}{\varepsilon} \bigg\{
 \int_M \psi_t \,d\mu_{t+\varepsilon} -\int_M \psi_t \,d\mu_t \bigg\} \,dt.
\end{align*}
Denote by $\pi_{t,t+\varepsilon}$ the optimal coupling of $(\mu_t,\mu_{t+\varepsilon})$.
Taking
\[ \psi_t(y)-\psi_t(x) =F^*\big( x,D\psi_t(x) \big) d(x,y) +o\big( d(x,y) \big) \]
into account, we deduce that
\begin{align*}
\Psi(D\psi) &= \liminf_{\varepsilon \downarrow 0} \int_I \frac{1}{\varepsilon}
 \bigg\{ \int_{M \times M} \{ \psi_t(y)-\psi_t(x) \} \,\pi_{t,t+\varepsilon}(dx\,dy) \bigg\} \,dt \\
&\le \bigg( \int_I \int_M F^{*2}\big( x,D\psi_t(x) \big) \,\mu_t(dx) \,dt \bigg)^{1/2}
 \liminf_{\varepsilon \downarrow 0} \bigg( \int_I \frac{d_W(\mu_t,\mu_{t+\varepsilon})^2}{\varepsilon^2} \,dt \bigg)^{1/2} \\
&= F^*_V(D\psi) \bigg( \int_I |\dot{\mu}_t|^2 \,dt \bigg)^{1/2}.
\end{align*}
We similarly obtain $\Psi(D\psi) \ge -F^*_V(D(-\psi)) (\int_I |\dot{\mu}_t|^2 \,dt)^{1/2}$.
Hence $\Psi$ is a bounded functional and extended to the closure $\overline{V}$ with respect to $F^*_V$.

Thus we find unique $\alpha \in \overline{V}$ (up to a difference on a null measure set)
maximizing the functional $\Psi -F^{*2}_V/2$ on $\overline{V}$.
We set $\Phi_t:=J^*_W(\mu_t,\alpha_t)$ and observe by contruction that, for any $\psi \in \mathcal{C}^{\infty}_c(I \times M)$,
\[ \int_I \int_M \{ \partial_t \psi_t +D\psi_t(\Phi_t) \} \,d\mu_t \,dt=0. \]
This is nothing but the desired continuity equation $(\ref{eq:coeq})$.
Strict convexity of the norm squared $F^{*2}_V/2$ ensures that $\alpha$ is actually a unique element satisfying $(\ref{eq:coeq})$.

Take a sequence $\{ D\psi^{(n)} \}_{n \in \N} \subset V$ converging to $\alpha$.
Then we find
\begin{align*}
F^{*2}_V(\alpha) &= \lim_{n \to \infty} \int_I \int_M D\psi^{(n)}_t(\Phi_t) \,d\mu_t \,dt
 = \lim_{n \to \infty} \Psi(D\psi^{(n)}) \\
&\le \lim_{n \to \infty} F^*_V(D\psi^{(n)}) \bigg( \int_I |\dot{\mu}_t|^2 \,dt \bigg)^{1/2}
 = F^*_V(\alpha) \bigg( \int_I |\dot{\mu}_t|^2 \,dt \bigg)^{1/2}.
\end{align*}
Combining this with Lemma \ref{lm:tanv} shows $F_W(\mu_t,\Phi_t)=|\dot{\mu}_t|$ a.e.\ $t \in I$.
\end{proof}

\begin{defn} \rm
For each locally Lipschitz continuous curve $(\mu_t)_{t \in I} \subset \mathcal{P}(M)$,
we denote by $\dot{\mu}_t \in T_{\mu_t}\mathcal{P}$ its {\it tangent vector field} given by Theorem \ref{th:coeq}.
\end{defn}

\begin{cor}
For any $\mu,\nu \in \mathcal{P}(M)$, we have
\[ d_W(\mu,\nu) = \inf_{(\mu_t)_{t \in [0,1]}} \bigg( \int_0^1 F^2_W(\mu_t,\dot{\mu}_t) \,dt \bigg)^{1/2}, \]
where the infimum is taken over all locally Lipschitz continuous curves $(\mu_t)_{t \in [0,1]} \subset \mathcal{P}(M)$
with $\mu_0=\mu$ and $\mu_1=\nu$.
\end{cor}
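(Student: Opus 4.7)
The plan is to prove the two inequalities separately. For the easy direction (upper bound on $d_W$), I would take an arbitrary locally Lipschitz continuous curve $(\mu_t)_{t\in[0,1]}$ with $\mu_0=\mu$, $\mu_1=\nu$, and combine Lemma \ref{lm:metd} with Theorem \ref{th:coeq} to obtain
\[
d_W(\mu,\nu)\ \le\ \int_0^1 |\dot\mu_t|\,dt\ =\ \int_0^1 F_W(\mu_t,\dot\mu_t)\,dt.
\]
An application of Cauchy--Schwarz with respect to Lebesgue measure on $[0,1]$ then gives
\[
d_W(\mu,\nu)\ \le\ \left(\int_0^1 F_W^2(\mu_t,\dot\mu_t)\,dt\right)^{1/2},
\]
and taking the infimum over all admissible curves yields one inequality.

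For the reverse inequality, I would exhibit a curve at which equality is attained. In the case $\mu\in\mathcal{P}_{\mathrm{ac}}(M)$, the Brenier--McCann theorem (quoted above) produces the displacement interpolation $\mu_t=(T_t)_\sharp\mu$ with $T_t(x)=\exp_x(t\Nabla\varphi(x))$, which is a \emph{minimal} geodesic from $\mu$ to $\nu$. Minimality together with the fact that the interpolant parametrises the geodesic by a linear interpolation in $t$ implies $d_W(\mu_s,\mu_t)=(t-s)\,d_W(\mu,\nu)$ for $0\le s\le t\le 1$, so the curve is Lipschitz with metric derivative $|\dot\mu_t|=d_W(\mu,\nu)$ at every $t$. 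Invoking Theorem \ref{th:coeq} once more gives $F_W(\mu_t,\dot\mu_t)=|\dot\mu_t|=d_W(\mu,\nu)$ a.e., and hence
\[
\left(\int_0^1 F_W^2(\mu_t,\dot\mu_t)\,dt\right)^{1/2}\ =\ d_W(\mu,\nu),
\]
which is the required curve.

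The main obstacle is that the Brenier--McCann theorem as stated requires the source measure $\mu$ to be absolutely continuous. For general $\mu,\nu\in\mathcal{P}(M)$, I would approximate: pick $\mu^n\in\mathcal{P}_{\mathrm{ac}}(M)$ with $d_W(\mu^n,\mu)\to 0$ (e.g.\ via convolution with the heat kernel $p_t(\cdot,z)$ constructed earlier, or via a standard mollification on coordinate charts). For each $n$ let $(\mu^n_t)_{t\in[0,1]}$ denote the Brenier--McCann geodesic from $\mu^n$ to $\nu$; by the first paragraph this is Lipschitz with constant $d_W(\mu^n,\nu)$. Since $M$ is compact, $(\mathcal{P}(M),d_W)$ is a compact metric space, so by Arzel\`a--Ascoli a subsequence of $(\mu^n_t)$ converges uniformly in $t$ to a Lipschitz curve $(\mu_t)$ with $\mu_0=\mu$, $\mu_1=\nu$ and Lipschitz constant $d_W(\mu,\nu)$. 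Lower semicontinuity of the action $\int_0^1 F_W^2(\mu_t,\dot\mu_t)\,dt$ under $d_W$-uniform convergence (which follows from Theorem \ref{th:coeq} interpreted via the dual/variational characterisation of $\alpha$ constructed in its proof, i.e., as the supremum over test functions of the linear functional $\Psi$) then yields $(\int_0^1 F_W^2(\mu_t,\dot\mu_t)\,dt)^{1/2}\le d_W(\mu,\nu)$, completing the proof. Establishing this lower semicontinuity rigorously is the delicate technical point; all other steps are essentially direct applications of the preceding lemmas.
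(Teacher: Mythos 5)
Your proof is correct and follows the same route as the paper: the inequality $\le$ comes from Lemma \ref{lm:metd}, Theorem \ref{th:coeq} and Cauchy--Schwarz, and equality is attained by a minimal geodesic (the paper simply invokes the existence of such a geodesic for arbitrary endpoints, which you reconstruct via Brenier--McCann plus approximation when $\mu$ is not absolutely continuous). The one remark worth making is that the ``delicate'' lower semicontinuity of the action that you flag at the end is not actually needed: the Arzel\`a--Ascoli limit curve is Lipschitz with constant $d_W(\mu,\nu)$, so $d_W(\mu_s,\mu_t)\le(t-s)d_W(\mu,\nu)$, and the triangle inequality from $\mu_0$ to $\mu_1$ forces equality throughout; hence the limit is itself a constant-speed minimal geodesic with $|\dot\mu_t|\equiv d_W(\mu,\nu)$, and applying Theorem \ref{th:coeq} directly to this curve gives $\int_0^1 F_W^2(\mu_t,\dot\mu_t)\,dt=d_W(\mu,\nu)^2$ with no semicontinuity argument.
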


\begin{proof}
Recall that $F_W(\mu_t,\dot{\mu}_t)=|\dot{\mu}_t|$ a.e.\ by Theorem \ref{th:coeq}.
Then the inequality $\le$ follows from Lemma \ref{lm:metd},
and equality is attained by a minimal geodesic from $\mu$ to $\nu$.
\end{proof}

\bigskip

For $\mu \in \mathcal{P}(M)$, we define the exponential map $\exp_{\mu}: T_\mu\mathcal{P} \rightarrow \mathcal{P}(M)$ by
$\exp_\mu(\Phi) :=(\exp \Phi)_\sharp \mu$.
Given a function $S$ on (a subset of) $\mathcal{P}(M)$, we say that $S$ is differentiable at $\mu \in \mathcal{P}(M)$
in direction $\Phi \in T_\mu \mathcal{P}$ if the directional derivative
\[ D_\Phi S(\mu):=\lim_{t \downarrow 0} \frac{S(\exp_\mu(t\Phi))-S(\mu)}{t} \]
exists.
We say that $S$ is differentiable at $\mu \in \mathcal{P}(M)$ if there exists
$\alpha \in T_\mu^* \mathcal{P}$ such that $\langle \alpha,\Phi \rangle_\mu =D_\Phi S(\mu)$
holds for all $\Phi =\Nabla \varphi \in T_\mu \mathcal{P}$ with $\varphi \in \mathcal{C}^{\infty}(M)$.
In this case, this $\alpha$ is denoted by $DS(\mu)$ and called the derivative of $S$ at $\mu$.
The gradient vector of $S$ at $\mu$ is defined by $\Nabla_W S(\mu) :=J^*_W(\mu,DS(\mu))$.

\begin{defn}\label{df:Wgrad} \rm
A continuous curve $(\mu_t)_{t \ge 0} \subset \mathcal{P}(M)$ which is
locally Lipschitz continuous on $(0,\infty)$ is called a {\it gradient flow} for $S$ if
$\dot{\mu}_t=\Nabla_W (-S)(\mu_t)$ holds at a.e.\ $t \in (0,\infty)$.
\end{defn}

\begin{prop}\label{pr:Wgrad}
Take $\mu=\rho m \in \mathcal{P}_{\mathrm{ac}}(M)$ such that $\rho \in H^1(M)$.
If $-\log \rho \not\in H^1(M,\mu)$, then $-\mathrm{Ent}$ is not differentiable at $\mu$.
If $-\log \rho \in H^1(M,\mu)$, then $-\mathrm{Ent}$ is differentiable at $\mu$ and
the gradient vector is given by
\[ \Nabla_W (-\mathrm{Ent})(\mu) =\frac{1}{\rho} \Nabla(-\rho) \in T_\mu \mathcal{P}. \]
In particular, its norm squared $F_W^2(\mu,\Nabla_W(-\mathrm{Ent})(\mu))$
coincides with the Fisher information with respect to the reverse Finsler structure $\overleftarrow{F}:$
\[ \overleftarrow{I}(\mu)
 :=\int_M \overleftarrow{F}^2 \big( x,\overleftarrow{\Nabla}\rho(x) \big) \frac{1}{\rho(x)} \,m(dx)
 =\int_M F^2\big( x,\Nabla(-\rho)(x) \big) \frac{1}{\rho(x)} \,m(dx). \]
\end{prop}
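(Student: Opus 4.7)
My strategy is to identify the differential $D(-\mathrm{Ent})(\mu)=D(-\log\rho)$ by a direct first-order computation of the entropy along the push-forward curve in the prescribed direction, and then read off the gradient vector via the pointwise Legendre transform.

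Fix $\Phi=\Nabla\varphi$ with $\varphi\in\mathcal{C}^\infty(M)$ and let $T_t(x):=\exp_x(t\Phi(x))$. Compactness of $M$ and smoothness of $\Phi$ ensure that $T_t$ is a diffeomorphism for $|t|$ small, so $\mu_t:=\exp_\mu(t\Phi)=(T_t)_\sharp\mu$ is absolutely continuous with density $\rho_t$ satisfying $\rho_t(T_t(x))=\rho(x)\,K_t(x)$, where in a local chart $K_t(x)=e^{V(T_t(x))-V(x)}\,|\!\det dT_t(x)|^{-1}$. A change of variables transforms the entropy into $\mathrm{Ent}(\mu_t)=\mathrm{Ent}(\mu)+\int_M\rho\,\log K_t\,dm$. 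Differentiating $\log K_t$ at $t=0$ and comparing with the coordinate expression for $\mathrm{div}$ in Chapter~\ref{sc:Lap} yields $\partial_t\log K_t|_{t=0}=-\mathrm{div}\,\Phi$; together with the defining identity $(\ref{eq:div})$ for the divergence this gives
\[ D_\Phi\mathrm{Ent}(\mu)=-\int_M\rho\,\mathrm{div}\,\Phi\,dm=\int_M D\rho\cdot\Phi\,dm=\langle D(\log\rho),\Phi\rangle_\mu, \]
the last equality using $D\rho=\rho\,D(\log\rho)$ weakly on $\{\rho>0\}$ and $D\rho=0$ a.e.\ on $\{\rho=0\}$. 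Hence $D_\Phi(-\mathrm{Ent})(\mu)=\langle D(-\log\rho),\Phi\rangle_\mu$ for every smooth $\Phi=\Nabla\varphi$.

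If $-\log\rho\in H^1(M,\mu)$, then $D(-\log\rho)$ has finite $F_W^*(\mu,\cdot)$-norm and thus belongs to $T_\mu^*\mathcal{P}$ by the defining closure; by the previous identity it equals the differential $D(-\mathrm{Ent})(\mu)$ in the sense of Section~\ref{sc:Wass}. The pointwise formula $J^*_W(\mu,\alpha)(x)=J^*(x,\alpha(x))$ combined with the $1$-homogeneity of $J^*(x,\cdot)$ then yields
\[ \Nabla_W(-\mathrm{Ent})(\mu)(x)=J^*\!\big(x,-D\rho(x)/\rho(x)\big)=\frac{1}{\rho(x)}\,J^*\!\big(x,-D\rho(x)\big)=\frac{1}{\rho(x)}\Nabla(-\rho)(x). \]
For the Fisher-information identity I would combine the Legendre duality $F_W(\mu,J^*_W(\mu,\alpha))=F^*_W(\mu,\alpha)$ with the pointwise relation $F(x,\Nabla u)=F^*(x,Du)$, plus the short calculation (from $\overleftarrow{F}(x,\xi)=F(x,-\xi)$) that $\overleftarrow{J}^*(x,\alpha)=-J^*(x,-\alpha)$, whence $\overleftarrow{\Nabla}\rho=-\Nabla(-\rho)$ and $\overleftarrow{F}^2(x,\overleftarrow{\Nabla}\rho)=F^2(x,\Nabla(-\rho))$.

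For the non-differentiability case, assume $-\log\rho\notin H^1(M,\mu)$. Any $\alpha\in T_\mu^*\mathcal{P}$ satisfying $\langle\alpha,\Nabla\varphi\rangle_\mu=D_{\Nabla\varphi}(-\mathrm{Ent})(\mu)$ for all $\varphi\in\mathcal{C}^\infty(M)$ would, by Cauchy--Schwarz, force
\[ \sup_{\varphi}\frac{|\langle D(-\log\rho),\Nabla\varphi\rangle_\mu|}{F_W(\mu,\Nabla\varphi)}\le F^*_W(\mu,\alpha)<\infty, \]
so it suffices to produce smooth $\varphi_n$ driving this quotient to infinity. Heuristically one takes $\Nabla\varphi_n$ close to the optimal direction $\Phi^*(x)=J^*(x,-D\rho(x)/\rho(x))$ on exhausting sets $A_n\nearrow\{\rho>0\}$ chosen so that $\int_{A_n}F^{*2}(x,-D\rho/\rho)\,\rho\,dm\to\infty$, and mollifies. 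Making this rigorous under the limited regularity of $\rho$ is the main obstacle; it rests on the density built into Definition~\ref{df:Wtan} combined with a diagonal argument over the exhaustion, so that the truncated-and-smoothed optimal directions can be realised as smooth \emph{gradients} with controlled error in the $F_W$-norm.
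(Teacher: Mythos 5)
Your computation of the directional derivative is the same as the paper's: push forward along $T_t(x)=\exp_x(t\Nabla\varphi(x))$, use the Jacobian equation to rewrite $\mathrm{Ent}(\mu_t)$, and differentiate at $t=0$ to get $D_\Phi(-\mathrm{Ent})(\mu)=\langle D(-\log\rho),\Phi\rangle_\mu$; the identification of the gradient via the pointwise Legendre transform and the $1$-homogeneity of $J^*(x,\cdot)$ then matches the paper. One caveat: your claim that $T_t$ is a diffeomorphism for small $|t|$ is not justified as stated, because $\Nabla\varphi=J^*(\cdot,D\varphi)$ is in general only continuous (not differentiable) on the set $U_0=\{D\varphi=0\}$, so $dT_t$ need not exist there. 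The paper avoids this by first invoking Lemma \ref{lm:ccon} to make $t\varphi$ a $d^2/2$-convex potential for small $t$, and then importing from \cite{Oint} the a.e.\ injectivity of $T_t$, the absolute continuity of $\mu_t$, and the Jacobian equation with the convention $\mathbf{D}[DT_t]:=1$ on $U_0$ (where $T_t=\mathrm{id}$ anyway). Your chart-level formula for $K_t$ is correct off $U_0$, but you should say how $U_0$ is handled.

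The genuine gap is in the non-differentiability case, and you have flagged it yourself: you need smooth functions $\varphi_n$ whose \emph{gradients} approximate the optimal direction $J^*(x,-D\rho/\rho)$ well enough that the quotient $\langle D(-\log\rho),\Nabla\varphi_n\rangle_\mu/F_W(\mu,\Nabla\varphi_n)$ blows up, and your truncate-then-mollify-the-vector-field scheme does not obviously produce gradient fields. The paper's device sidesteps this entirely: smooth the \emph{density} rather than the direction. Take smooth positive $\sigma_n\to\rho$ and set $\varphi_n=-\log\sigma_n$; then $\Nabla\varphi_n=J^*(\cdot,-D\sigma_n/\sigma_n)$ is automatically the gradient of a smooth function, formula $(\ref{eq:dEnt})$ gives $D_{\Nabla\varphi_n}(-\mathrm{Ent})(\mu)=\int (-D\rho)(J^*(-D\sigma_n/\sigma_n))\,dm\approx\int F^{*2}(x,D(-\log\rho))\,d\mu$, while $F_W^2(\mu,\Nabla\varphi_n)=\int F^{*2}(x,D(-\log\sigma_n))\,d\mu$ is the same quantity with $\sigma_n$ in place of $\rho$, so the quotient tends to $(\int F^{*2}(x,D(-\log\rho))\,d\mu)^{1/2}=\infty$ exactly when $-\log\rho\notin H^1(M,\mu)$. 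Replacing your exhaustion-and-diagonal sketch by this choice of $\varphi_n$ closes the gap; the rest of your duality argument (bounding the quotient by $F_W^*(\mu,\alpha)$ for any candidate differential $\alpha$) is fine, up to the remark that for the nonsymmetric norms one controls $\langle\alpha,-\Phi\rangle$ via $\overleftarrow{F}$, which is comparable to $F$ on the compact $M$.
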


\begin{proof}
Fix arbitrary $\varphi \in \mathcal{C}^{\infty}(M)$ and put $\Phi:=\Nabla \varphi$,
$U_0:=\{ x \in M \,:\, \Phi(x)=0 \}$.
By virtue of Lemma \ref{lm:ccon}, the function $t\varphi$ is $d^2/2$-convex for sufficiently small $t>0$.
Hence the map $T_t(x):=\exp_x(t\Phi(x))$ is the unique optimal transport from $\mu$ to $\mu_t:=(T_t)_{\sharp}\mu$.
We will use some properties of $T_t$ and $\mu_t$ established in \cite{Oint}.
The map $T_t$ is injective on a subset of $\mu$-full measure and $\mu_t$ is absolutely continuous,
so that we can write $\mu_t=\rho_t m$.
The map $T_t$ is $\mathcal{C}^{\infty}$ on $M \setminus U_0$ as $T_t(x)$ is not a cut point of $x$.
For $\mu$-a.e.\ $x \in M$, we have the Jacobian equation $\rho(x) =\rho_t(T_t(x)) \mathbf{D}[DT_t(x)]$.
Here $\mathbf{D}[DT_t(x)]$ denotes the Jacobian of the linear operator $DT_t(x):T_xM \rightarrow T_{T_t(x)}M$
with respect to $m$.
That is to say, $\mathbf{D}[DT_t(x)]:=1$ if $x \in U_0$, and
\[ \mathbf{D}[DT_t(x)] :=\frac{m_{T_t(x)}(DT_t(A))}{m_x(A)} \]
for $x \in M \setminus U_0$, where $A \subset T_xM$ is an arbitrary nonempty, bounded open set.

The change of variable formula and the Jacobian equation $\rho =\rho_t(T_t) \mathbf{D}[DT_t]$ show that
\begin{align*}
\mathrm{Ent}(\mu_t) &= \int_M \rho_t \log \rho_t \,dm
 =\int_M \rho_t(T_t) \log \big( \rho_t(T_t) \big) \mathbf{D}[DT_t] \,dm \\
&= \int_M \rho \log\bigg( \frac{\rho}{\mathbf{D}[DT_t]} \bigg) \,dm
 =\mathrm{Ent}(\mu) -\int_M \log(\mathbf{D}[DT_t]) \rho \,dm.
\end{align*}
Thus we have
\begin{align}
\lim_{t \downarrow 0} \frac{\mathrm{Ent}(\mu) -\mathrm{Ent}(\mu_t)}{t}
&=\lim_{t \downarrow 0} \int_M \frac{\log(\mathbf{D}[DT_t])}{t} \rho \,dm
 =\lim_{t \downarrow 0} \int_M \frac{\mathbf{D}[DT_t]-1}{t} \rho \,dm \nonumber\\
&= \lim_{t \downarrow 0} \int_M \frac{\rho -\rho(T_t)}{t} \mathbf{D}[DT_t] \,dm
 =-\int_M D\rho(\Phi) \,dm. \label{eq:dEnt}
\end{align}

If $-\log \rho \in H^1(M,\mu)$, then we obtain $D(-\mathrm{Ent})(\mu) =-(D\rho)/\rho =D(-\log \rho)$ and
\[ \Nabla_W(-\mathrm{Ent})(\mu) =\Nabla(-\log \rho) =\frac{1}{\rho} \Nabla(-\rho). \]
In the other case where $-\log \rho \not\in H^1(M,\mu)$, we approximate $\rho$ by smooth positive $\sigma$
and consider $\varphi=-\log \sigma$.
Then the above calculation $(\ref{eq:dEnt})$ leads
\[ \limsup_{\nu \to \mu} \frac{\mathrm{Ent}(\mu)-\mathrm{Ent}(\nu)}{d_W(\mu,\nu)} =\infty. \]
Hence $-\mathrm{Ent}$ is not differentiable at $\mu$.
\end{proof}

\begin{thm}\label{th:Wgrad}
Let $(\mu_t)_{t \ge 0} \subset \mathcal{P}_{\mathrm{ac}}(M)$ be a continuous curve which is
locally Lipschitz continuous on $(0,\infty)$, and assume that $\mu_t=\rho_t m$ with $\rho_t \in H^1(M)$ a.e.\ $t \in (0,\infty)$.
Then $(\mu_t)_{t \ge 0}$ is a gradient flow for the relative entropy if and only if
$(\rho_t)_{t \ge 0}$ is a heat flow with respect to the reverse Finsler structure $\overleftarrow{F}$ of $F$.
\end{thm}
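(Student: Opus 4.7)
The plan is to reduce the statement to the two identification results already at hand: Theorem \ref{th:coeq}, which pins down $\dot\mu_t$ as the unique tangent field satisfying the continuity equation \eqref{eq:coeq}, and Proposition \ref{pr:Wgrad}, which identifies $\Nabla_W(-\mathrm{Ent})(\mu_t)$ with $\rho_t^{-1}\Nabla(-\rho_t)$. The decisive structural fact bridging the two pictures is the identity
\[
\overleftarrow{\bm\Delta}u = -\bm\Delta(-u),
\]
which follows from $\overleftarrow{J}^*(x,\alpha)=-J^*(x,-\alpha)$ (a direct consequence of $\overleftarrow{F}(x,\xi)=F(x,-\xi)$) together with the observation that $\mathrm{div}$ depends only on $m$ and not on $F$; it yields in particular $\mathrm{div}(\Nabla(-\rho))=-\overleftarrow{\bm\Delta}\rho$.

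For the forward direction, if $(\mu_t)$ is a gradient flow then Definition \ref{df:Wgrad} and Proposition \ref{pr:Wgrad} give $\dot\mu_t=\Phi_t$ with $\Phi_t:=\rho_t^{-1}\Nabla(-\rho_t)=\Nabla(-\log\rho_t)$, the second equality using the positive homogeneity $J^*(x,c\alpha)=cJ^*(x,\alpha)$. Theorem \ref{th:coeq} then says that the pair $(\mu_t,\Phi_t)$ satisfies \eqref{eq:coeq}. Substituting $d\mu_t=\rho_t\,dm$ and integrating by parts in time against smooth test functions transforms \eqref{eq:coeq} into $\partial_t\rho_t=-\mathrm{div}(\rho_t\Phi_t)$ in the distributional sense; since $\rho_t\Phi_t=\Nabla(-\rho_t)$ again by homogeneity, the right-hand side equals $-\bm\Delta(-\rho_t)=\overleftarrow{\bm\Delta}\rho_t$, so $\rho_t$ solves the heat equation for $\overleftarrow F$ weakly.

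For the converse, assume $\partial_t\rho_t=\overleftarrow{\bm\Delta}\rho_t$ in the weak sense, and set $\Phi_t:=\Nabla(-\log\rho_t)$ where $\rho_t>0$ (arbitrary elsewhere, as only the product $\rho_t\Phi_t=\Nabla(-\rho_t)$ enters). Running the previous computation in reverse, the heat equation for $\overleftarrow F$ becomes exactly the continuity equation \eqref{eq:coeq} for $(\mu_t,\Phi_t)$. Since Proposition \ref{pr:Wgrad} identifies $\Phi_t$ with $\Nabla_W(-\mathrm{Ent})(\mu_t)\in T_{\mu_t}\mathcal{P}$, the uniqueness part of Theorem \ref{th:coeq} forces $\dot\mu_t=\Phi_t$ almost everywhere, which is exactly the gradient flow condition.

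The main obstacle is not the identification but an integrability issue: to invoke Proposition \ref{pr:Wgrad} in the converse direction one must know $-\log\rho_t\in H^1(M,\mu_t)$, i.e.\ finiteness of the Fisher information $\overleftarrow{I}(\mu_t)=\int F^2(\Nabla(-\rho_t))\rho_t^{-1}\,dm$ for a.e.\ $t>0$. I would obtain this from the entropy dissipation identity $\frac{d}{dt}\mathrm{Ent}(\mu_t)=-\overleftarrow{I}(\mu_t)$, derived by testing the heat equation against a regularisation of $1+\log\rho_t$; a preliminary truncation/regularisation using the $L^p$-contraction of Theorem \ref{thm:cont} reduces to the case $\mathrm{Ent}(\mu_0)<\infty$, after which integration in $t$ yields $\int_0^T\overleftarrow{I}(\mu_t)\,dt<\infty$, so $\overleftarrow I(\mu_t)<\infty$ almost everywhere.
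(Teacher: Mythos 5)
Your proposal is correct and follows essentially the same route as the paper: both directions rest on translating the continuity equation $(\ref{eq:coeq})$ into the weak heat equation for $\overleftarrow{F}$ via $\rho_t\cdot\rho_t^{-1}\Nabla(-\rho_t)=\Nabla(-\rho_t)$, with Theorem \ref{th:coeq} identifying $\dot\mu_t$ and Proposition \ref{pr:Wgrad} identifying $\Nabla_W(-\mathrm{Ent})$. The integrability point you flag is handled in the paper the same way (testing with $-\log\sigma$ for a smooth positive approximation $\sigma$ of $\rho$ to get $\int_{t_0}^{t_1}\overleftarrow{I}(\mu_t)\,dt=\mathrm{Ent}(\mu_{t_0})-\mathrm{Ent}(\mu_{t_1})<\infty$), except that the paper works on $[t_0,t_1]$ with $t_0>0$, where $\mathrm{Ent}(\mu_{t_0})$ is automatically finite since $\rho_{t_0}\in H^1(M)\subset L^2(M)$ and $M$ is compact, so no reduction to $\mathrm{Ent}(\mu_0)<\infty$ is needed.
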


\begin{proof}
If $(\mu_t)_{t \ge 0}$ is a gradient flow, then Proposition \ref{pr:Wgrad} yields that
\[ \dot{\mu}_t =\Nabla_W(-\mathrm{Ent})(\mu_t) =\frac{\Nabla(-\rho_t)}{\rho_t} \]
for a.e.\ $t \in (0,\infty)$.
Then it follows from the continuity equation $(\ref{eq:coeq})$ that,
for any test function $\psi \in \mathcal{C}_c^{\infty}((0,\infty) \times M)$,
\[ -\int_0^{\infty} \int_M \psi_t \partial_t \rho_t \,dm \,dt
 =\int_0^{\infty} \int_M \partial_t \psi_t \, \rho_t \,dm \,dt
 =-\int_0^{\infty} \int_M D\psi_t\big( \Nabla(-\rho_t) \big) \,dm \,dt. \]
Therefore $-\rho_t$ is a heat flow with respect to $F$ or, equivalently,
$\rho_t$ is a heat flow with respect to $\overleftarrow{F}$.

Conversely, if $(\rho_t)_{t \ge 0}$ is a heat flow with respect to $\overleftarrow{F}$,
then a similar calculation shows that $\{ \Nabla(-\rho_t) \}/\rho_t$ satisfies the continuity equation $(\ref{eq:coeq})$.
We remark that, given $0<t_0<t_1<\infty$, approximating $\rho$ with smooth positive
$\sigma \in C^{\infty}([t_0,t_1] \times M)$ and considering $\psi =-\log \sigma$ yields
\begin{align*}
&\int_{t_0}^{t_1} \int_M \frac{F^2(\Nabla(-\rho_t))}{\rho_t} \,dm \,dt \\
&= \int_{t_0}^{t_1} \int_M \partial_t \rho_t \,dm \,dt
 -\int_M \rho_{t_1} \log \rho_{t_1} \,dm +\int_M \rho_{t_0} \log \rho_{t_0} \,dm \\
&= \mathrm{Ent}(\mu_{t_0}) -\mathrm{Ent}(\mu_{t_1})
 < \infty.
\end{align*}
Therefore $-\log \rho_t \in H^1(M,\mu_t)$ and $\dot{\mu}_t =\{ \Nabla(-\rho_t) \}/\rho_t =\Nabla_W(-\mathrm{Ent})(\mu_t)$
for a.e.\ $t \in (0,\infty)$ by Theorem \ref{th:coeq} and Proposition \ref{pr:Wgrad}.
\end{proof}

\begin{cor}\label{cr:Wheat}
Under the same assumptions as in Theorem $\ref{th:Wgrad}$, the following are equivalent$:$
\begin{description}
\item[(i)]
$(\mu_t)_{t \ge 0}$ is a gradient flow for the relative entropy on the reverse Wasserstein space
$($i.e., the space of probability measures with the reverse Wasserstein distance$);$

\item[(ii)]
$(\mu_t)_{t \ge 0}$ solves the ODE $\dot{\mu}_t =-\Nabla_W \mathrm{Ent}(\mu_t)$ on the Wasserstein space$;$

\item[(iii)]
$(\rho_t)_{t \ge 0}$ solves the heat equation on $M$.
\end{description}
\end{cor}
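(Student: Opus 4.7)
The plan is to prove (i)$\,\Leftrightarrow\,$(iii) by invoking Theorem~\ref{th:Wgrad} applied to the reverse Finsler structure, and (ii)$\,\Leftrightarrow\,$(iii) by a short direct computation parallel to that in the proof of Theorem~\ref{th:Wgrad} but carried out for $\mathrm{Ent}$ in place of $-\mathrm{Ent}$.

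For (i)$\,\Leftrightarrow\,$(iii), I would apply Theorem~\ref{th:Wgrad} verbatim to the Finsler space $(M,\overleftarrow{F},m)$. The ``reverse Wasserstein space'' of (i) is by construction the Wasserstein space built from the distance $\overleftarrow{d}$ of $\overleftarrow{F}$, and the entire machinery of Sections~\ref{sc:Wass} and~\ref{sc:Wgrad} (the tangent space $T_\mu\mathcal{P}$, the norm $F_W$, the continuity equation, the gradient vector $\Nabla_W$) is set up for an arbitrary underlying Finsler structure. Since $\overleftarrow{\overleftarrow{F}}=F$, Theorem~\ref{th:Wgrad} then states that (i) is equivalent to $(\rho_t)$ being a heat flow with respect to $\overleftarrow{\overleftarrow{F}}=F$, which is (iii).

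For (ii)$\,\Leftrightarrow\,$(iii), I would first evaluate $\Nabla_W\mathrm{Ent}(\mu)$. The differentiation chain $(\ref{eq:dEnt})$ in the proof of Proposition~\ref{pr:Wgrad} applies equally to $\mathrm{Ent}$ itself up to an overall sign, giving $D\mathrm{Ent}(\mu)=D\rho/\rho$ as an element of $T^*_\mu\mathcal{P}$. Since $J^*(x,\cdot)$ is positively $1$-homogeneous and $\rho>0$ a.e., a pointwise application yields
\[ \Nabla_W\mathrm{Ent}(\mu)=J^*_W(\mu,D\rho/\rho)=\frac{1}{\rho}\Nabla\rho. \]
Thus the ODE in (ii) reads $\dot{\mu}_t=-\rho_t^{-1}\Nabla\rho_t$. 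Substituting this vector field into the continuity equation $(\ref{eq:coeq})$ and using $d\mu_t=\rho_t\,dm$ to cancel the $1/\rho_t$ factor, one obtains
\[ \int\!\!\int\psi_t\,\partial_t\rho_t\,dm\,dt=-\int\!\!\int D\psi_t(\Nabla\rho_t)\,dm\,dt \]
for every $\psi\in\mathcal{C}^\infty_c((0,\infty)\times M)$, which is exactly the weak form of $\partial_t\rho_t=\bm{\Delta}\rho_t$. The converse is obtained by reading the same calculation backwards, using the uniqueness of the tangent vector field in Theorem~\ref{th:coeq} to identify $\dot{\mu}_t$ with $-\rho_t^{-1}\Nabla\rho_t$.

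The main technical point, as in the proof of Theorem~\ref{th:Wgrad}, is ensuring that $\mathrm{Ent}$ is genuinely differentiable at $\mu_t$, i.e.\ that $\log\rho_t\in H^1(M,\mu_t)$ so that $D\rho/\rho$ truly defines an element of $T^*_{\mu_t}\mathcal{P}$. I would handle this exactly as at the end of the proof of Theorem~\ref{th:Wgrad}: a smooth positive approximation of $\rho_t$ combined with the Fisher information identity
\[ \int_{t_0}^{t_1}\!\!\int_M\frac{F^2(\Nabla\rho_t)}{\rho_t}\,dm\,dt=\mathrm{Ent}(\mu_{t_0})-\mathrm{Ent}(\mu_{t_1})<\infty \]
produces the required $H^1(M,\mu_t)$-integrability. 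This is the only step that requires more than unwinding definitions, and it can be imported verbatim from Theorem~\ref{th:Wgrad} rather than redone.
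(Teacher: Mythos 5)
Your proposal is correct, and it follows the route the paper evidently intends (the corollary is stated without proof as an immediate consequence of Theorem \ref{th:Wgrad}): (i)$\Leftrightarrow$(iii) by applying that theorem to $(M,\overleftarrow{F},m)$ and using $\overleftarrow{\overleftarrow{F}}=F$, and (ii)$\Leftrightarrow$(iii) by repeating its computation with $-\Nabla_W\mathrm{Ent}(\mu)=-\rho^{-1}\Nabla\rho$ (which, by the $1$-homogeneity of $J^*$ and the identity $\overleftarrow{J}^*(x,\alpha)=-J^*(x,-\alpha)$, is exactly the reverse-structure gradient $\overleftarrow{\Nabla}_W(-\mathrm{Ent})(\mu)$, so (i) and (ii) describe the same ODE). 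The differentiability issue is correctly reduced to $\log\rho_t\in H^1(M,\mu_t)$ via the Fisher-information identity, just as in the proof of Theorem \ref{th:Wgrad}.
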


\begin{remark} \rm
(1) What is missing in Theorem \ref{th:Wgrad} is the contraction property of the heat flow in the Wasserstein space
which is well-known in the Riemannian setting (see, e.g., \cite{vRS} and \cite{Ogra}).
Compare this with Corollary \ref{cr:cont}.
As mentioned in \cite[page 4]{AGS}, even the contraction of gradient flows of ($K$-)convex functions
on Banach spaces is still an open problem.

\medskip

(2)
In Theorem \ref{th:Wgrad}, the existence of the heat flow starting from given $\mu_0$ is guaranteed by
Theorem \ref{th:glob}.
On the other hand, as the relative entropy is $K$-convex if $(M,F,m)$ satisfies
the bound $\infty$-$\mathrm{Ric} \ge K$ (Theorem \ref{th:CD}),
we can argue as in \cite[\S 2]{AGS} or \cite[\S 5]{Ogra}
(except right differentiability for which we need tangent cones) to obtain a continuous curve
$(\mu_t)_{t \ge 0} \in \mathcal{P}(M)$ which satisfies the following properties:
\begin{itemize}
\item[(i)]
The curve $t \mapsto \mu_t$ is locally Lipschitz continuous on $(0,\infty)$.

\item[(ii)]
For all $t>0$, we have
\begin{align*}
&\lim_{\delta \downarrow 0} \frac{d_W(\mu_t,\mu_{t+\delta})}{\delta}
 =F_W\big( \mu_t,\Nabla_W(-\mathrm{Ent})(\mu_t) \big), \\
&\mathrm{Ent}(\mu_t) =\mathrm{Ent}(\mu_0) -\int_0^t F_W\big( \mu_s,\Nabla_W(-\mathrm{Ent})(\mu_s) \big)^2 \,ds.
\end{align*}
\end{itemize}
Thus, in a certain sense $(\mu_t)_{t \ge 0}$ will be a gradient flow for the entropy. However,
it is unclear whether it is actually a gradient flow in the sense of Definition \ref{df:Wgrad}.
\end{remark}

\section{Appendix}

\subsection{Proof of $\partial_tu\in H_0^1$ for global solutions on compact $M$}

Let $u$ be a global solution of the heat equation on a compact space $M$ with $u_0\in H^1_0(M)$.
We know from Theorem \ref{th:glob} that
$v(t,x)=\partial_tu(t,x)$ exists for a.e.\ $(t,x)$ and satisfies
\[ \int_0^T\int_M v^2\,dm\,dt\le\mathcal{E}(u_0). \]
For arbitrary $\delta\in\mathds{R}$ put
$v^{(\delta)}(t,x)=(u(t+\delta,x)-u(t,x))/\delta$.
Then it follows from $(\ref{l2-unique})$ that
\[ \partial_t\|v^{(\delta)}_t\|_{L^2}^2 \le -4\kappa_M \mathcal{E}(v_t^{(\delta)}). \]
Hence, for $|\delta|\le\tau\le T$
\begin{align*}
&4\tau\kappa_M \int_\tau^T\mathcal{E}(v_t^{(\delta)}) \,dt
 \le 4\kappa_M \int_0^T\int_s^T\mathcal{E}(v_t^{(\delta)})\,dt\,ds
 \le \int_0^T \|v^{(\delta)}_s\|_{L^2}^2 \,ds\\
&=\int_M\int_0^T \bigg( \frac1\delta \int_s^{s+\delta} v_t \,dt \bigg)^2 \,ds\,dm
 \le \int_M\int_0^{T+\tau} v_t^2 \,dt\,dm \le \mathcal{E}(u_0).
\end{align*}
Therefore, the family $\{ v^{(\delta)} \,:\, |\delta|\le\tau \}$ is bounded in the norm
\[ \bigg(\int_\tau^T[\mathcal{E}(w_t)+\|w_t\|_{L^2}^2]\,dt \bigg)^{1/2} \]
of $L^2([\tau,T], H^1_0(M))$ for any $\tau>0$.
Reflexivity and completeness of $H^1_0(M)$ then imply the existence of $\tilde v\in
L^2([\tau,T], H^1_0(M))$ such that $v^{(\delta)}\to \tilde v$ in the given norm.
This in particular implies convergence in $L^2$ and thus $\tilde v=v$.
Therefore, $v_t\in H^1_0(M)$ for a.e.\ $t$ with locally square integrable norm of the derivative $F^*(Dv_t)$.
Note that we used the compactness of $M$ only for ensuring $\kappa_M >0$.

\subsection{The same for local solutions on arbitrary $M$}\label{ap:2}

For local solutions, essentially the same arguments apply. For each
open set $\Omega_0$ relatively compact in $\Omega$ we choose another
relatively compact open set $\Omega_1$ containing the closure of
$\Omega_0$ and a (cut-off) function $\psi\in H_0^1(\Omega_1)$
satisfying $0\le\psi\le1$ and $\max\{F^*(D\psi),F^*(-D\psi)\}\le C$ on $M$
(for some constant $C$) and $\psi=1$ on $\Omega_0$.
For instance, we can choose $\psi(x)=d(\Omega_1,\Omega_0\cup\{x\})/d(\Omega_1,\Omega_0)$.

Then a modification of the above calculations yields,
with $\kappa=\kappa_{\Omega_1}$ and $\overline\kappa=(\lambda_{\Omega_1}\lambda_{\Omega_1}^*)^{-1}$
\begin{align*}
&-\frac12\partial_t \bigg[ \int_{\Omega_1} \psi^2 (v^{(\delta)}_t)^2 \,dm \bigg]
 = \frac1{\delta^2} \int_{\Omega_1}
 \big( D(\psi^2u_{t+\delta})-D(\psi^2u_t) \big) (\Nabla u_{t+\delta}-\Nabla u_t) \,dm\\
&= \frac1{\delta^2} \int_{\Omega_1}
 \psi^2 (Du_{t+\delta}-Du_t) (\Nabla u_{t+\delta} -\Nabla u_t) \,dm\\
&\quad +\frac1{\delta^2} \int_{\Omega_1}
 (u_{t+\delta}-u_t) 2\psi D\psi (\Nabla u_{t+\delta} -\Nabla u_t) \,dm\\
&\stackrel{(\ast)} \ge \frac{\kappa}{\delta^2} \int_{\Omega_1}
 \psi^2 F^{*2}(Du_{t+\delta}-Du_t) \,dm \\
&\quad -\frac{2C\overline\kappa}{\delta} \|u_{t+\delta}-u_t\|_{L^2(\Omega_1)} \cdot
 \bigg( \int_{\Omega_1} \psi^2 F^{*2}\bigg( \frac1\delta(Du_{t+\delta}-Du_t) \bigg) \,dm \bigg)^{1/2}\\
&\ge \frac{\kappa}{2} \int_{\Omega_0} F^{*2}(Dv^{(\delta)}_t) \,dm
 -\frac{2C^2\overline\kappa^2}{\kappa} \|v_t^{(\delta)}\|_{L^2(\Omega_1)}^2.
\end{align*}
For the inequality $(\ast)$ we use in addition to the previous argument the fact that
\[ F\big( J^*(\alpha)-J^*(\beta) \big) \le \overline\kappa F^*(\alpha-\beta) \]
which follows from our basic assumption (\ref{eq:F-str2}) on $F$ since for some intermediate point
$\gamma\in T_x^*M$ between $\alpha$ and $\beta$ we have
\[ [J^*(\alpha)-J^*(\beta)] \cdot J[J^*(\alpha)-J^*(\beta)] =g^*(\gamma)(\alpha-\beta) \cdot J[J^*(\alpha)-J^*(\beta)] \]
which implies
\begin{align*}
F^2\big( J^*(\alpha)-J^*(\beta) \big) &\le F^2\big( g^*(\gamma) \cdot (\alpha-\beta) \big) \\
&\le \frac1{\lambda_{\Omega_1}} (\alpha-\beta)^T \cdot g^*(\gamma)^T \cdot g^*(\gamma) \cdot (\alpha-\beta) \\
&\le \frac1{\lambda_{\Omega_1} \lambda^{*2}_{\Omega_1}} |\alpha-\beta|^2
 \le \frac1{\lambda^2_{\Omega_1} \lambda^{*2}_{\Omega_1}} F^{*2}(\alpha-\beta).
\end{align*}
Hence, for $|\delta|\le\tau\le T$
\[ c \int_\tau^T\mathcal{E}_{\Omega_0}(v_t^{(\delta)}) \,dt
 \le \int_0^T \|v^{(\delta)}_s\|_{L^2(\Omega_1)}^2 \,ds
 \le \int_0^{T+\tau} \|v_s\|_{L^2(\Omega_1)}^2 \,ds
 \le \mathcal{E}_{\Omega_1}(u_0) \]
with $c=\kappa\tau(1+4C^2\overline\kappa T/\kappa)^{-1}$.
The same argumentation as before now implies that $v_t\in H^1_{\mathrm{loc}}(\Omega)$
for a.e.\ $t$ provided $u_0\in H^1_{\mathrm{loc}}(\Omega)$.

\subsection{Proof of $u\in H^2_{\mathrm{loc}}$ for local solutions}\label{ap:3}

Let $\Omega_1\subset\Omega$ be an open subset on which a global coordinate system is given.
Fix $k\in\{1,\ldots,n\}$ and put $x^\delta=x+\delta e_k$ for small $\delta\in\mathds{R}$ as well as
$D_k^\delta\varphi(x)=(\varphi(x^\delta)-\varphi(x))/\delta$.
Observe that
\[ D_k^\delta(\varphi \psi)(x)=\varphi(x^\delta) D_k^\delta\psi(x)+\psi(x) D_k^\delta\varphi(x) \]
and
\[ \int_{\Omega_1} \varphi(x) D_k^\delta\psi(x) \,m(dx) =-\int_{\Omega_1} D_k^{-\delta}\varphi(x) \psi(x)\,m(dx) \]
for all compactly supported $\varphi$ and $\psi$ on $\Omega_1$.
If $u$ is a solution to the heat equation then for every test function
$\varphi$ which is compactly supported in $\Omega_1$
\begin{align*}
&-\int (D_k^{-\delta} \varphi) (\partial_tu) e^{-V} \,m(dx) \\
&= \int [D(D_k^{-\delta}\varphi)\cdot J^*(Du)] e^{-V} \,m(dx)
 = -\int D\varphi \cdot D_k^{\delta}\big( J^*(Du) e^{-V} \big) \,m(dx)\\
&= -\int \big[ D\varphi(x) \cdot D_k^{\delta}\big( J^*(Du) \big)(x) \big] e^{-V(x^\delta)} \,m(dx) \\
&\qquad -\int [D\varphi(x)\cdot J^*(Du)(x)] D_k^{\delta}(e^{-V(x)}) \,m(dx).
\end{align*}
On the other hand,
\begin{align*}
&-\int (D_k^{-\delta}\varphi) (\partial_tu) e^{-V} \,m(dx)
 = \int \varphi D_k^{\delta}\left(\partial_tu \cdot e^{-V}\right) \,m(dx)\\
&= \int \varphi(x) \partial_t(D_k^{\delta}u)(x) e^{-V(x)} \,m(dx)
 + \int \varphi(x) \partial_tu(x^{\delta}) D_k^{\delta} (e^{-V(x)}) \,m(dx).
\end{align*}
That is,
\begin{align}
-\int \varphi \partial_t(D_k^{\delta}u) \,dm
&= \int \big[ D\varphi(x) \cdot D_k^{\delta}\big( J^*(Du) \big)(x) \big]
 e^{V(x)-V(x^\delta)} \,m(dx) \nonumber\\
&\quad +\int [D\varphi(x)\cdot J^*(Du)(x)]
 e^{V(x)} D_k^{\delta}(e^{-V(x)}) \,m(dx) \label{h2-weak}\\
&\quad +\int \varphi(x) \partial_tu(x^{\delta}) e^{V(x)} D_k^{\delta}(e^{-V(x)}) \,m(dx). \nonumber
\end{align}

To simplify the presentation, let us first of all treat the
particular case where $u$ is a global solution on $\Omega_1$, i.e. $u\in H^1_0(\Omega_1)$.
This allows to choose $\varphi=D_k^{\delta}u$ which then yields
\begin{align*}
-\frac12 \partial_t \int |D_k^{\delta}u(x)|^2 \,m(dx)
&= \int \big[ D_k^\delta (Du)(x)\cdot D_k^\delta \big( J^*(Du) \big)(x) \big] e^{V(x)-V(x^\delta)} \,m(dx)\\
&\quad +\int [D_k^\delta (Du)(x)\cdot J^*(Du)(x)] e^{V(x)} D_k^\delta(e^{-V(x)}) \,m(dx)\\
&\quad +\int D_k^\delta u(x) \partial_tu(x^\delta) e^{V(x)} D_k^\delta(e^{-V(x)}) \,m(dx).
\end{align*}
We will estimate each of the three terms on the right-hand side from below (or in modulus).
Using the bound  $|D_kV|\le\Lambda$ from our assumption (\ref{smooth Finsler}) we obtain
$|e^VD_k^\delta(e^{-V})| \le (e^{\delta\Lambda}-1)/\delta \le 2\Lambda$
for all sufficiently small $\delta$ and thus we can estimate the second term as follows
\begin{align*}
&\bigg| \int [D_k^\delta (Du)\cdot J^*(Du)] e^V D_k^\delta(e^{-V}) \,dm \bigg| \\
&\le 2\Lambda \bigg( \int F^{*2}(D_k^\delta Du) \,dm \bigg)^{1/2}
 \bigg( \int F^{*2}(Du) \,dm \bigg)^{1/2}\\
&= 4\Lambda \mathcal{E}_{\Omega_1}(D_k^\delta u)^{1/2} \mathcal{E}_{\Omega_1} (u)^{1/2}.
\end{align*}
The third term can be estimated as
\begin{align*}
&\bigg| \int D_k^\delta u(x) \partial_tu(x^\delta) e^{V(x)} D_k^\delta(e^{-V(x)}) \,m(dx) \bigg|\\
&\le 2\Lambda e^{\delta\Lambda/2} \|D_k^\delta u \|_{L^2({\Omega_1})} \|\partial_t u\|_{L^2({\Omega_1})}.
\end{align*}
Finally, using the bound
\begin{align*}
F\big( x,J^*(x,\alpha)-J^*(x^\delta,\alpha) \big)
&\le \frac{1}{\sqrt{\lambda}}|J^*(x,\alpha)-J^*(x^{\delta},\alpha)|
 \le \frac{1}{\sqrt{\lambda}} \frac{\delta \Lambda}{\sqrt{\lambda^*}} |\alpha| \\
&\le \frac{\delta \Lambda}{\lambda \sqrt{\lambda^*}} F^*(x^{\delta},\alpha)
 =: \delta\Lambda' F^*(x^\delta,\alpha)
\end{align*}
for all $\alpha$ from assumptions $(\ref{eq:F-str3})$, $(\ref{smooth Finsler})$
as well as the basic convexity assumption (\ref{eq:2uni}) of the norm
$F^*$ with $\kappa:=\kappa_{\Omega_1}$, the first term (times $e^{\delta\Lambda}$) yields
\begin{align*}
&e^{\delta\Lambda} \int \big[ D_k^\delta (Du)(x) \cdot D_k^\delta \big( J^*(Du) \big)(x) \big]
 e^{V(x)-V(x^\delta)} \,m(dx)\\
&\ge \frac1{\delta^2} \int [Du(x^\delta)-Du(x)] \cdot
 \big[ J^* \big( x^\delta,Du(x^\delta) \big) -J^*\big( x,Du(x) \big) \big] \,m(dx)\\
&\ge \frac1{\delta^2} \int [Du(x^\delta)-Du(x)] \cdot
 \big[ J^*\big( x,Du(x^\delta) \big) -J^* \big( x,Du(x) \big) \big] \,m(dx)\\
&\qquad -\frac{\Lambda'}{\delta} \int F^*\big( x,Du(x^\delta)-Du(x) \big)
 F^*\big( x^\delta,Du(x^\delta) \big) \,m(dx)\\
&\ge \kappa \int F^{*2}\big( x,D_k^\delta u(x) \big) \,m(dx)\\
&\qquad -\Lambda' \int F^*\big( x,D_k^\delta u(x) \big)
 F^*\big( x^\delta,Du(x^\delta) \big) \,m(dx)\\
&\ge 2\kappa \mathcal{E}_{\Omega_1}(D_k^\delta u)
 -2\Lambda' e^{\delta\Lambda/2} \mathcal{E}_{\Omega_1}(D_k^\delta u)^{1/2} \mathcal{E}_{\Omega_1}(u)^{1/2}.
\end{align*}
Summarizing and integrating with respect to $t\in[0,T]$, we obtain
\begin{align*}
&\frac12\|D_k^\delta u_0\|^2_{L^2({\Omega_1})}
 \ge -\frac12\int_0^T \partial_t \bigg[ \int |D_k^{\delta}u_t(x)|^2 \,m(dx) \bigg] \,dt\\
&\ge \kappa\int_0^T \mathcal{E}_{\Omega_1}(D_k^\delta u_t) \,dt
 -C_1 \int_0^T \mathcal{E}_{\Omega_1}(u_t) \,dt
 -C_2 \int_0^T \|D_k^\delta u_t\|_{L^2({\Omega_1})} \|\partial_tu_t\|_{L^2({\Omega_1})} \,dt.
\end{align*}
We know that
$\int_0^T \|\partial_tu_t\|^2_{L^2({\Omega_1})}\,dt=\mathcal{E}_{\Omega_1}(u_0)-\mathcal{E}_{\Omega_1}(u_T)
\le \mathcal{E}_{\Omega_1}(u_0)$
and $\mathcal{E}_{\Omega_1}(u_t)\le\mathcal{E}_{\Omega_1}(u_0)$ for every $t\ge0$.
Moreover,
\begin{align*}
\|D_k^\delta u_t\|_{L^2({\Omega_1})}^2
&= \int_{\Omega_1} \bigg| \frac1\delta\int_0^\delta D_ku_t(x+te_k)\,dt \bigg|^2 e^{-V(x)} \,m(dx)\\
&\le \int_{\Omega_1} |D_ku_t(x)|^2 e^{-V(x)} \,m(dx)
 \le C \mathcal{E}_{\Omega_1}(u_t).
\end{align*}
Hence,
\[ \int_0^T \mathcal{E}_{\Omega_1}(D_k^\delta u_t) \,dt
 \le C' \mathcal{E}_{\Omega_1}(u_0) <\infty \]
uniformly in $\delta$ (provided $|\delta|$ is sufficiently small).
Thus $w_t=D_ku_t=\lim_{\delta\to0}D_k^\delta u_t$ exists in $H^1_0({\Omega_1})$ for a.e.\ $t$
and satisfies $\int_0^T\mathcal{E}_{\Omega_1}(w_t)\,dt<\infty$.

\bigskip

In order to treat the general case, let us now merely assume that $u$ is a local solution.
Given any point in $M$, we find a neighborhood $\Omega_0$ and another relatively compact open set
$\Omega_1$ containing the closure of $\Omega_0$ and admitting a global coordinate system.
We choose a (cut-off) function $\psi \in H_0^1(\Omega_1)$ satisfying $0\le\psi\le1$ and $F^*(D\psi)\le C$
on $M$ (for some constant $C$) and $\psi=1$ on $\Omega_0$.

Now let us put $\varphi=\psi^2 D_k^\delta u$ in (\ref{h2-weak}).
Then all the integrals $\int \cdots \,m(dx)$ in the
previous calculations have to be changed into $\int \cdots \psi^2(x) \,m(dx)$.
In particular, the leading order term will then be of the form
\[ \kappa \int_{\Omega_1} F^{*2} \big( x,D_k^\delta u(x) \big) \psi^2(x) \,m(dx)
 \ge 2\kappa \mathcal{E}_{\Omega_0}(D_k^\delta u).\]
Moreover, due to Leibnitz rule,
two additional terms will show up (from differentiating the first
factor in $\varphi=\psi^2 D_k^\delta u$ with respect to $D$).
However, these terms can easily be estimated in terms of the above
`leading order term', $\mathcal{E}_{\Omega_1}(u)$ and $\| D^{\delta}_k u \|_{L^2(\Omega_1)}$,
cf.\ estimate $(\ast)$ in the previous section.
It finally implies
\[ \int_0^T \mathcal{E}_{\Omega_0}(D_k^\delta u_t) \,dt \le C \mathcal{E}_{\Omega_1}(u_0) <\infty \]
uniformly in $\delta$ and thus $D_ku_t\in H^1_{\mathrm{loc}}(\Omega)$ for a.e.\ $t$.

\end{document}